\documentclass{amsart}
\usepackage{amsmath}
\usepackage{array}
\usepackage{stackengine}

\usepackage{amsfonts}
\usepackage{url} 
\usepackage{xcolor} 
\usepackage{fancyhdr}
\usepackage{multirow}
\usepackage{graphicx}
\usepackage{relsize}
\usepackage{amssymb}
\usepackage[normalem]{ulem}
\usepackage{xcolor}
\definecolor{tealedit}{RGB}{0,128,128}

\usepackage{hyperref}
\usepackage{mathtools}
\usepackage{indentfirst}
\usepackage{comment}
\usepackage{enumerate}
\usepackage{float}

\usepackage{tikz}
\usetikzlibrary{arrows.meta}

\newtheorem{theorem}{Theorem}[section]
\newtheorem*{question*}{Question}

\newtheorem{example}[theorem]{Example}
\newtheorem{lemma}[theorem]{Lemma}
\newtheorem{corollary}[theorem]{Corollary}
\newtheorem{proposition}[theorem]{Proposition}
\newtheorem{remark}[theorem]{Remark}

\newtheorem{definition}[theorem]{Definition}

\hypersetup{colorlinks=true,linkcolor=blue, linktocpage}
\hypersetup{colorlinks,
citecolor=blue,
allcolors=blue
}
\DeclareMathOperator{\dist}{\rho}
\DeclareMathOperator{\diam}{diam}
\DeclareMathOperator{\Fix}{Fix}
\DeclareMathOperator{\Int}{int}
\DeclareMathOperator{\Rec}{Rec}
\DeclareMathOperator{\Per}{Per}
\DeclareMathOperator{\End}{End}

\DeclareMathOperator{\htop}{h_{top}}
\DeclareMathOperator{\C}{C}
\DeclareMathOperator{\lcm}{lcm}
\DeclareMathOperator{\N}{N}

\newcommand{\eps}{\varepsilon}

\title[$\alpha$-limit sets in hyperspace]
{Special $\alpha$-limit sets in the hyperspace of continua: closedness and entropy for interval maps}
\author[D. Jeli\'c]{Domagoj Jeli\'c}
\address[D. Jeli\'c]
{Faculty of Science, University of Split, Ru\dj era Bo\v{s}kovi\'ca 33, 21000 Split, Croatia
}
\email{djelic@pmfst.hr}
\author[P. Oprocha]{Piotr Oprocha}
\address[P. Oprocha]{
	Centre of Excellence IT4Innovations - Institute for Research and Applications of Fuzzy Modeling, University of Ostrava, 30. dubna 22, 701 03 Ostrava 1, Czech Republic.
}
\email{piotr.oprocha@osu.cz}
\subjclass{37E05, 37B40, 54B20.}
\keywords{interval map, special $\alpha$-limit set, $\alpha$-limit set, backward dynamics, hyperspace of continua, topological entropy, horseshoe.}
\date{\today}
\begin{document}

\begin{abstract}
This paper investigates the backward dynamics of hyperspace systems induced by
continuous interval maps.
Focusing on the hyperspace of continua, we provide a structural
characterization of the $\alpha$-limit sets of backward branches for interval subcontinua.
A central result of this work is the proof that the special $\alpha$-limit set of any
nondegenerate subinterval is always closed. This reveals a striking topological contrast
with classical single-point dynamics, where special $\alpha$-limit sets
need not be closed.
Finally, we prove that if a special $\alpha$-limit set contains two nondegenerate periodic continua with disjoint orbits, then the base map has a horseshoe and, consequently, positive topological entropy.
\end{abstract}

\maketitle

\section{Introduction}

Although dynamical systems theory traditionally focuses on the asymptotic behavior of individual trajectories, studying the evolution of larger sets often reveals profound features of the system that remain invisible at the level of single points. This macroscopic perspective naturally leads to the investigation of hyperspaces. For a given compact metric space $X$ and a continuous self-map $f\colon X \to X$, one can naturally define the induced mappings on the hyperspace of all nonempty compact subsets $2^X$, as well as on the hyperspace of continua $\mathcal{C}(X)$. Equipped with the Hausdorff metric, both $2^X$ and $\mathcal{C}(X)$ become compact metric spaces, and the induced maps naturally constitute dynamical systems in their own right.

Understanding to what extent the dynamical properties of the base map $f$ are inherited by, or reflected in, the induced hyperspace systems has been a vibrant area of research since the classical work of Bauer and Sigmund in 1975~\cite{Bauer}. Over the decades, a vast literature has emerged. 
For instance, the transfer of transitivity, mixing, and weak mixing to hyperspaces has been thoroughly investigated (see, e.g., \cite{Dendrite1, Banks, KOCSF}). More recently, attention has shifted to shadowing properties~\cite{Arbieto, Carvalho, Fernandez} and various measures of dynamical complexity, such as polynomial entropy and mean dimension~\cite{Djoric, Jelic, Zhu}. 

A particularly interesting aspect of this complexity is the behavior of topological entropy. It is well known that if a base map has positive topological entropy, the entropy of the induced system on the full hyperspace $2^X$ typically blows up to infinity~\cite{KOCSF}. In stark contrast, the hyperspace of continua $\mathcal{C}(X)$ exhibits much more rigid behavior: for spaces, such as intervals and graphs, the topological entropy on $\mathcal{C}(X)$ coincides exactly with the entropy of the base system (see, e.g., \cite{MatviiInterval}),
however even in the one-dimensional case, such as dendrites, it can explode to infinity \cite{Dendrite2}.

Despite the comprehensive understanding of forward dynamics and complexity in these spaces, the corresponding backward dynamics remains significantly less explored. The concept of an $\alpha$-limit set was originally introduced as a dual notion to the $\omega$-limit set, aiming to describe the asymptotic source of a trajectory. For noninvertible continuous maps, however, a single point may have multiple preimages, leading to an intricate branching of backward orbits. This inherent difficulty in tracking the past of a system naturally gave rise to several non-equivalent definitions of negative limit sets. 

The earliest rigorous approach, introduced by Coven and Nitecki in 
\cite{Coven}, defined the $\alpha$-limit set based on the accumulation of full preimages of a given point. A fundamental refinement was later introduced by Hero~\cite{Hero}, who shifted the focus to individual backward branches. He defined the \emph{special} $\alpha$-limit set, denoted as $s\alpha(x)$, as the set of limit points of all possible backward branches originating from $x$. Over the years, the subtle topological and dynamical differences between these various definitions have been thoroughly systematized (see, e.g., \cite{Balibrea_def, CholewaOprocha2021}). 

Recently, the study of special $\alpha$-limit sets has gained immense momentum, driven by deep topological and dynamical questions. Kolyada, Misiurewicz, and Snoha~\cite{KolyadaMisiurewiczSnoha2020} significantly advanced the theory by providing a comprehensive investigation of the fundamental properties of these sets. Their work inspired extensive research by a broader mathematical community. For instance, Jackson, Mance, and Roth~\cite{JacksonManceRoth2022} explored the complex descriptive set-theoretic nature of special $\alpha$-limit sets (such as non-Borel properties in dimension two), while Hant\'akov\'a and Roth~\cite{HantakovaRoth} investigated their role in one-dimensional spaces. Some further characterizations of $\alpha$-limit sets in one-dimensional dynamics were obtained in~\cite{Forys}.

A fundamental topological question at the heart of these recent developments is whether special $\alpha$-limit sets are always closed. In general, the topological complexity of these sets can be surprisingly wild; for instance, Jackson, Mance, and Roth~\cite{JacksonManceRoth2022} demonstrated that for a continuous map on the unit square, $s\alpha(x)$ can be an analytic but non-Borel set. For continuous interval maps, however, the structure was expected to be much more regular, leading Kolyada, Misiurewicz, and Snoha~\cite{KolyadaMisiurewiczSnoha2020} to formally raise the question of whether special $\alpha$-limit sets in this one-dimensional setting are always closed. This was definitively answered by Hant\'akov\'a and Roth~\cite{HantakovaRoth}, who proved that $s\alpha(x)$ is not necessarily closed, even for interval maps. They provided a complete characterization of this phenomenon, demonstrating that $s\alpha(x)$ fails to be closed if and only if $x$ belongs to a maximal solenoidal set containing a nonrecurrent point from the Birkhoff center.

While the topological nature of special $\alpha$-limit sets for individual points is now well understood, their behavior in the realm of hyperspaces remains completely uncharted. The present paper takes this next natural step, extending the intricate investigation of backward dynamics to the hyperspace of continua. It serves as a natural continuation of previous works~\cite{Jelic, Jelic2}, which studied entropy, recurrent points, and $\omega$-limit sets of induced systems on one-dimensional continua. Let $f\colon I\to I$ be an interval map and $A\in\mathcal{C}(I)$. In this work, we address two central questions:
\begin{enumerate}
    \item For a given backward branch $\{A_n\}_{n\leq 0}$ of $A$, what is the exact structure of $\alpha(\{A_n\}_{n\leq 0})$? 
    \item  Is $s\alpha(A)$ always closed, and what can be said about its topological structure in general?
\end{enumerate}

For the base map, the first question was almost completely answered by Fory\'s-Krawiec, Hant\'akov\'a, and Oprocha~\cite{Forys}. In this paper, we expand this theory to the induced system, providing a complete structural description of the $\alpha$-limit sets of backward branches in $\mathcal{C}(I)$.

\begin{theorem}\label{thm:alpha_char}
Let $I$ be a compact interval and let $\{A_n\}_{n\leq 0}$ be a backward branch in $\mathcal{C}(I)$. 
Then one of the following cases holds:
\begin{enumerate}[(a)]
    \item $\alpha(\{A_n\}_{n\leq 0})$ is a single periodic orbit;
    \item $\alpha(\{A_n\}_{n\leq 0})=\{\{x\}\colon x\in L\}$, where $L$ is an $\alpha$-limit set of the base map;
    \item There is a maximal basic set $\omega$ in $(I,f)$ and a point $x\in A$ with backward branch $\{x_n\}_{n\leq 0}$, $x_n\in A_n$ for all $n\leq 0$, with infinite $\alpha(\{x_n\}_{n\leq 0})=L\subset \omega$, such that each element of $\alpha(\{A_n\}_{n\leq 0})$ intersects $L$ and is either a singleton subset of $L$ or a wandering interval with endpoints from $L$. Moreover, $L\subset \bigcup \alpha(\{A_n\}_{n\leq 0})$. 
\end{enumerate}
\end{theorem}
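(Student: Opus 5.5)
Since every element of $\mathcal{C}(I)$ is a point or a closed interval, my plan is to split according to the lengths $|A_n|$ along the branch. The main inputs will be two facts from interval dynamics. The first is the classification of $\alpha$-limit sets of backward branches of points from \cite{Forys}: such a set is either a periodic orbit or an infinite set contained in a maximal $\omega$-limit (basic) set. The second is the behaviour of nondegenerate intervals whose iterates do not shrink: either they are eventually periodic intervals, or they are wandering intervals.

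\textbf{Step 1: degenerate limits.} Suppose $\liminf_{n\to-\infty}|A_n|=0$ along every subsequence, i.e. $|A_n|\to 0$. Pick $x_0\in A$, and use compactness and the relation $f(A_{n-1})=A_n$ to choose $x_n\in A_n$ with $f(x_{n-1})=x_n$. This gives a backward branch of points. Because $d_H(A_n,\{x_n\})\le|A_n|\to0$, we get $\alpha(\{A_n\})=\{\{x\}:x\in\alpha(\{x_n\})\}$, which is case (b). (A periodic orbit of singletons also lands here, consistently with (a).)

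\textbf{Step 2: nondegenerate limits.} Now suppose some limit $B\in\alpha(\{A_n\})$ is a nondegenerate interval, say $A_{n_k}\to B$. Since $\alpha(\{A_n\})$ is invariant ($\hat f$ maps it onto itself), each forward image $f^j(B)$ is again a limit. I will distinguish two situations.
\begin{enumerate}[(i)]
\item Some $f^j(B)$ meets its own later iterates in their interiors. Then the standard interval argument produces a periodic interval $J$ such that the $A_n$ eventually lie, up to small error, along the orbit of $J$. The plan is to show that $A_{n}$ must then equal $f^{-p}$-preimages cycling through that orbit, so that the limit set is a single periodic orbit of continua, giving (a).
\item All iterates of every nondegenerate limit are pairwise disjoint in their interiors. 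Then these limits are wandering intervals. Their lengths must tend to zero along forward orbits, so the only possible accumulations are singletons.
\end{enumerate}

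\textbf{Step 3: producing $L$ in case (c).} Choose the point branch $\{x_n\}$ as in Step 1, but select $x_n$ to be an endpoint of $A_n$ consistently; by a compactness/diagonal argument this can be arranged after passing to a branch. Its set $L=\alpha(\{x_n\})$ is then infinite; otherwise, by \cite{Forys}, it is a periodic orbit and we are back in (a). Hence $L$ lies in a maximal basic set $\omega$. Every limit of $A_n$ contains a limit of $x_n$, so each element of $\alpha(\{A_n\})$ intersects $L$, and $L\subset\bigcup\alpha(\{A_n\})$ follows immediately.

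It remains to show that the endpoints of a nondegenerate limit $B$ lie in $L$. For this I will use that the interior of a wandering interval misses every $\omega$-limit set and every $\alpha$-limit point of backward branches. Since both endpoints of $A_{n_k}$ are traced by backward branches inside $\omega$, whose interior points cannot be approximated, the endpoints of $B$ must be limits of such branches. I must then show they come from the same branch $x_n$, or can be swapped into it.

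\textbf{Main obstacle.} I expect the hardest part to be showing that the limits in (c) are either singletons or genuinely wandering intervals with both endpoints in $L$, rather than nonwandering intervals sitting inside $\omega$. My first attempt will argue via the fact that a basic set of an interval map contains no nondegenerate interval on which $f$ has non-wandering behaviour without being periodic.
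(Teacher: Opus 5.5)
Your Step~1 is fine (it is the paper's case (b)). Step~2, however, carries all the content of case (a), and it does not contain the needed argument.

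The split ``iterates meet in interiors'' versus ``iterates have pairwise disjoint interiors'' is the wrong dichotomy. For $f(x)=x/2$ and $B=[1/2,1]$ the iterates $[2^{-n-1},2^{-n}]$ have disjoint interiors, but consecutive ones touch. So $B$ is not wandering; it is asymptotically periodic with $\omega_{\tilde{f}}(B)=\{\{0\}\}$, and your branch (ii) would misclassify it. The right dichotomy is Theorem~\ref{thm:JO2}.

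More seriously, in branch (i) you assert that a ``standard interval argument'' produces a single periodic orbit of continua. If a nondegenerate asymptotically periodic $B$ lies in $\alpha(\{A_n\}_{n\leq 0})$, then so does $\omega_{\tilde{f}}(B)$. Nothing you say excludes $F=\lim_n f^{np}(B)\neq B$; think of an interval mapped into its own interior around an attracting fixed point. In that situation the $\alpha$-limit set is not a periodic orbit. The missing key fact is that such $B$ is actually periodic (Lemma~\ref{lem:asympt_per}). The paper proves this by showing that, when $B\neq F$, the inclusions $A_n\subset B\cup F$ and $B\subset A_n$ each hold only finitely often (Lemmas~\ref{lem:A_not_sub_BUF} and~\ref{lem:B_not_sub_Ai}). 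It then runs a four-case analysis of the relative position of $B$ and $F$.

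Even with periodicity in hand, (a) still requires three exclusions:
\begin{itemize}
\item a second periodic orbit (Lemma~\ref{lem:periodic_alpha}, which uses Lemma~\ref{lem:fixed_sub_fixed} to find periodic points in $B\setminus C$);
\item singletons (Lemma~\ref{lem:nondeg});
\item wandering elements, whose $\omega_{\tilde{f}}$-limits are singletons.
\end{itemize}
None of this appears in your plan.

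Step~3 also contains steps that fail.
\begin{itemize}
\item You cannot in general choose $x_n$ to be an endpoint of $A_n$ with $f(x_{n-1})=x_n$. Both endpoints of $f(J)$ may be attained only at interior turning points of $J$, and no diagonal argument repairs this. An arbitrary branch $x_n\in A_n$ does give the intersection properties, but it gives no control of endpoints.
\item ``If $L$ is finite we are back in (a)'' is a non sequitur. Here $\alpha(\{A_n\}_{n\leq 0})$ contains a nondegenerate wandering interval, so it is not a periodic orbit. The correct reason $L$ is infinite is that $L$ meets the wandering limit $B$, which contains no periodic point, so $L$ cannot be a periodic orbit.
\item \cite{Forys} (Theorem~\ref{thm:maximal}) only puts an infinite $\alpha$-limit set inside a maximal $\omega$-limit set. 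That set may be a solenoid rather than a basic set (for zero entropy maps every infinite one is), and you never exclude this. The paper does so by showing that all $A_i$ would then lie in $Q=\bigcap_n M_n$ and be wandering. This is impossible, because two elements $A_j$ and $A_k=f^{k-j}(A_j)$ close to the nondegenerate $B$ must intersect.
\item You leave open exactly the point that a wandering limit has both endpoints in the basic set, rather than being a proper subinterval $[z_1,b]$ of a wandering interval $W=[z_1,z_2]$. In the paper this is the last part of Lemma~\ref{lem:wandering}. It is proved through the almost conjugacy of Lemma~\ref{lem:modelBS} together with a three-index argument forcing $W\subset A_{n_k}$, hence $B=W$.
\end{itemize}
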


Informally, such an $\alpha$-limit set is governed by the $\alpha$-limit set $L$ of a single point, possibly enriched by wandering intervals with endpoints in $L$.

More strikingly, we discover a profound contrast between point dynamics and hyperspace dynamics regarding the closedness problem. While $s\alpha(x)$ for single points can fail to be closed, we prove that if a subinterval $A$ is nondegenerate, its special $\alpha$-limit set $s\alpha(A)$ is always closed.

\begin{theorem}\label{thm:special_closed}
Let $f\colon I\to I$ be an interval map and $A\in \mathcal{C}(I)$ nondegenerate. Then $s\alpha(A)$ is closed.
\end{theorem}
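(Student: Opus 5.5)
Write $s\alpha(A)=\bigcup\alpha(\{A_n\}_{n\le 0})$, the union over all backward branches of $A$ in $\mathcal{C}(I)$. The plan is to take a sequence $B_k\in s\alpha(A)$ with $B_k\to B$ in the Hausdorff metric and to build a single backward branch of $A$ whose $\alpha$-limit set contains $B$. The natural tool is a diagonal construction: for each $k$ choose a branch $\{A^k_n\}$ of $A$ and times $m_k$ with $d_H(A^k_{-m_k},B_k)<1/k$. The obstruction is that distinct branches cannot simply be concatenated. A backward branch is determined by a chain $A=A_0$, $f(A_{-1})=A_0$, and so on. So we need a mechanism letting a single branch pass near $B_k$ and then return to a position from which it can also reach near $B_{k+1}$. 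The nondegeneracy of $A$ supplies this mechanism, and it is exactly what fails for points in the Hant\'akov\'a--Roth examples.

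First I will apply Theorem~\ref{thm:alpha_char} to the branches realizing the $B_k$. Case (a) gives periodic orbits, and case (b) gives singletons. Because $A$ is nondegenerate, $\operatorname{diam} A_n$ cannot tend to $0$ along a branch with $f^{|n|}(A_n)=A$ unless the relevant points are in an expanding or wandering regime. I will pass to a subsequence so that all the $B_k$ come from the same case. The key intermediate claim is the following: if some branch of $A$ has nondegenerate $\alpha$-limit elements, or if $A$ has preimage intervals accumulating on an $\alpha$-limit set of the base map, then $A$ has a nondegenerate preimage $A_{-p}$ that covers a periodic interval, or an interval $J$ with $f^p(J)\supseteq J\cup A$. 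Interval maps with such covering relations allow branches to be spliced. Concretely, if $J\subset A_{-p}$ and $f^{q}(J)\supseteq A$, then any finite piece of a branch of $A$ can be pulled back inside $J$ and continued. From this I get a ``return'' property: for every $k$ there is a finite chain starting at $A$, passing $1/k$-close to $B_k$, and ending at a continuum $C_k$ with $f^{t_k}(C_k)\supseteq A$. Since images of subcontinua are subcontinua and every continuum covering $A$ contains a continuum mapping exactly onto $A$, I can take preimages that map \emph{onto} $A$ exactly. This step uses the one-dimensionality essentially: a continuum mapped onto $A$ contains a subcontinuum mapped exactly onto $A$.

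Concatenating these chains yields one backward branch visiting $B_k$ up to $1/k$ for all $k$, so $B\in\alpha$ of it and hence $B\in s\alpha(A)$. When $B$ is a singleton $\{x\}$ with $x$ in the closure but not the set of base special $\alpha$-limits (the solenoidal, nonrecurrent situation), I will handle it separately. There $A$ is nondegenerate, so its preimages along branches accumulating in a solenoid are intervals inside the periodic intervals of the solenoidal structure. Their union contains a whole periodic interval of some level, which covers $x$'s neighborhoods by nondegenerate pieces shrinking to $x$, and those can be chained. The main obstacle is the return step: proving that nondegeneracy always yields a preimage of $A$ that re-covers $A$, rather than the branches drifting into wandering intervals or into a solenoid where no covering occurs. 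I will attack it by splitting on whether $A$ meets the interior of a periodic interval of a basic set (which gives a horseshoe-type covering), or lies in a solenoidal or wandering region (where I use the nested periodic interval structure directly).
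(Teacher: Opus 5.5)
Your proposal has a genuine gap at the step you yourself flag as the main obstacle. The return/splicing mechanism carries the whole argument, and as formulated it cannot deliver Hausdorff convergence to a nondegenerate $B$.

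\begin{itemize}
\item \textbf{The return property is vacuous as stated.} Any backward chain from $A$ of length $t_k$ ending at $C_k$ already satisfies $f^{t_k}(C_k)=A$.
\item \textbf{The covering version does not splice branches.} From $J\subseteq A_{-p}$ with $f^q(J)\supseteq A$, Lemma~\ref{lem:sharkovskii} gives $J'\subseteq J$ with $f^q(J')=A$. That is the start of a \emph{new} chain of $A$, whereas continuing the branch through $A_{-p}$ requires an element mapped onto $A_{-p}$.
\item \textbf{Pulling back the other way loses Hausdorff control.} Suppose instead you pull $A_{-p}$ back into an element $E$ of the branch realizing $B_{k+1}$. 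You first need an iterate of $E$ to cover $A_{-p}$. Even then you only get a subcontinuum $X\subseteq E$ with the right image, and nothing makes $X$ Hausdorff-close to $E$, hence to $B_{k+1}$.
\end{itemize}

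Pull-back splicing controls where the pieces lie, not their Hausdorff distance to the target, so it can only manufacture degenerate limits. That is exactly how the paper uses it, and only there:
\begin{itemize}
\item in Lemma~\ref{lem:disjoint_seq}, where pairwise disjoint $B_n$ force $B=\{b\}$, and one pulls back through horseshoe arcs near the $B_n$ (Theorem~\ref{thm:pos_ent}, Remark~\ref{rem:horseshoe}) or through shrinking intervals $[j_n,b]$ in a basic set;
\item in the subcase $B=\{a\}$ of Lemma~\ref{lem:decreasing}.
\end{itemize}

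What you are missing for nondegenerate limits is a different way to build branches. The paper's route is as follows.
\begin{itemize}
\item It first passes, via Lemma~\ref{lem:sequences}, to a subsequence that is either pairwise disjoint or has a common point.
\item In the common-point case, Theorem~\ref{thm:alpha_char} reduces to periodic $B_n$. Corollary~\ref{cor:nested} and Proposition~\ref{prop:nested} bound their periods, and after passing to an iterate they are fixed intervals.
\item Lemmas~\ref{lem:no_overlap}, \ref{lem:increasing_seq} and~\ref{lem:decreasing} then use \emph{levelwise unions}. Two branches of $A$ meeting at every level give a branch $\{A^n_k\cup A^m_k\}_{k\le 0}$ of $A$. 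Once one shows $B\subseteq A$ and $f(B)=B$, the branch $\{A^1_k\cup B\}_{k\le 0}$ converges to $B_1\cup B=B$.
\end{itemize}

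Hausdorff control at every late level is genuinely needed here. By Lemma~\ref{lem:per_orbit}, a branch with a nondegenerate periodic $B$ in its $\alpha$-limit set converges to $\mathcal{O}_{\tilde f}(B)$. Nothing in your planned split (basic set versus solenoidal or wandering region) supplies that control.

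Two smaller points:
\begin{itemize}
\item Your use of Theorem~\ref{thm:alpha_char} omits case (c).
\item The solenoidal singleton case is only asserted. Nothing you say shows that preimages of $A$ exhaust a periodic interval of the generating sequence, or that they can be chained down to $x$.
\end{itemize}
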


Finally, we connect the backward dynamics in the hyperspace directly to the topological complexity of the base map. We establish that the presence of disjoint periodic orbits with nondegenerate elements within $s\alpha(A)$ guarantees the existence of a horseshoe, thereby ensuring positive topological entropy of the base system $(I,f)$.

\begin{theorem}\label{thm:pos_ent}
Let $I$ be a compact interval, $A\in\mathcal{C}(I)$ and let there be nondegenerate $B,C\in\Per(\tilde{f})\cap s\alpha(A)$ such that $\mathcal{O}_{f}(B)\cap \mathcal{O}_{f}(C)=\emptyset$. Then $h_{top}(f)>0$.
\end{theorem}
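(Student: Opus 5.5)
We plan to produce a horseshoe directly. Let $p$ and $q$ be the periods of $B$ and $C$ under $\tilde f$, and put $N=pq$. Write $B=[b_1,b_2]$ and $C=[c_1,c_2]$ with $b_1<b_2$ and $c_1<c_2$. Then $f^N(B)=B$ and $f^N(C)=C$. Since the orbits are disjoint, $B\neq C$. The intervals $B$ and $C$ may still overlap, or one may contain the other, so disjointness of orbits in $\mathcal{C}(I)$ does not give disjointness in $I$. We will use the fact that $B,C\in s\alpha(A)$: there are backward branches $\{A_n\}$ and $\{A'_n\}$ of $A$ whose $\alpha$-limit sets contain $B$ and $C$ respectively.

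The first step is to apply Theorem~\ref{thm:alpha_char} to these branches. Case (b) is impossible, since $B$ and $C$ are nondegenerate. In case (c), every nondegenerate element is a wandering interval, which cannot be periodic. So each branch falls into case (a): its $\alpha$-limit set is exactly the periodic orbit of $B$ (respectively of $C$). Hence $A_{-kN-j}\to f^{-j}$-shifted elements of $\mathcal{O}(B)$ along a fixed residue class, and similarly for $C$.

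Concretely, for large $k$ the sets $A_{-kN}$ are Hausdorff-close to $B$ and satisfy $f^{kN}(A_{-kN})=A$. Likewise some $A'_{-mN}$ close to $C$ satisfy $f^{mN}(A'_{-mN})=A$. Choose $k,m$ large with the same residue structure, replacing $N$ by a multiple if needed. We then aim to find a single iterate $g=f^{M}$ and two closed intervals $J_1,J_2$ with disjoint interiors such that $g(J_i)\supseteq J_1\cup J_2$. That is a horseshoe, and it gives $h_{top}(f)\ge \log 2/M>0$.

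To build it, note that $B$ and $\mathcal{O}(C)$ are distinct $f^N$-invariant intervals. Moreover, $f^{kN}$ maps a set near $B$ onto $A$, and then $A$ itself converges backward to both. This yields two chains of coverings: an interval near $B$ $f$-covers $A$ after $kN$ steps, and $A$ "comes from" both $B$ and $C$.

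The key obstacle is to reverse this, turning the backward convergence into covering relations $B\to C$ and $C\to B$. For this we will use invariance. Since $f^N(B)=B$, the endpoints of $A_{-kN}$ lie near $b_1,b_2$. The interval $A_{-(k+1)N}$ is near $B$ and maps onto $A_{-kN}$ under $f^N$. So by connectedness and $f^N(B)=B$ we can try to show that $f^N$ maps a slight enlargement of $B$ over itself in a way forcing a fixed point structure. The real cross-covering will be argued as follows. Because $A$ is reached from near $B$ and from near $C$, and the branches pass through the full periodic orbits, we get that $f^{N\ell}(B)\supseteq$ something close to $A$'s preimages near $C$. Since $C$ is nondegenerate and $f^N(C)=C$, iterating gives $f^{N\ell'}(B)\supseteq C$, and symmetrically $f^{N\ell''}(C)\supseteq B$.

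If $B$ and $C$ have disjoint interiors, these mutual coverings together with self-coverings give a standard horseshoe, as in the classical lemma that two disjoint intervals each covering the union yield positive entropy. If instead they overlap, we split into subintervals $B\setminus C$, $C\setminus B$, and $B\cap C$. Periodic orbit disjointness then guarantees that some piece is nondegenerate and is covered twice, which again yields a horseshoe. Establishing the mutual covering rigorously from Hausdorff convergence is the step I expect to be hardest.
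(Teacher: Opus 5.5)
There is a genuine gap, and it starts with the hypothesis. In this paper $\mathcal{O}_f(B)=\bigcup_{n\ge 0}f^n(B)$ is a subset of $I$; the preliminaries explicitly distinguish it from $\mathcal{O}_{\tilde f}(B)$. So $\mathcal{O}_f(B)\cap\mathcal{O}_f(C)=\emptyset$ says the orbit unions are disjoint in $I$. In particular $B\cap C=\emptyset$ and $d=\dist(B,C)>0$, and this survives passing to $f^p$ and replacing $B,C$ by iterates. You read it as disjointness of the $\tilde f$-orbits, and under that reading the statement is false. In the first example of Section~\ref{sec:7}, $[-1,1]$ and $[-2,2]$ are distinct nondegenerate $\tilde f$-fixed elements of $s\alpha([-2,2])$, yet $h_{top}(f)=0$. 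In the second example, $[-1,0]$ and $[0,1]$ are $\tilde f$-fixed elements of $s\alpha(\{0\})$ with disjoint interiors, and again $h_{top}(f)=0$. So neither your ``overlap'' branch nor your ``disjoint interiors'' branch can be completed. Only the case $B\cap C=\emptyset$ is needed, and only that case is true.

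Even in that case your key step cannot work. Since $f^N(B)=B$, you have $f^{N\ell}(B)=B$ for all $\ell$, so $f^{N\ell'}(B)\supseteq C$ would force $C\subseteq B$. The coverings must come from the parts of the backward branch sticking out of $B$; this is the missing idea, and it is Lemma~\ref{lem:positive_htop}.

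The argument goes as follows. With $\eps=\min\{d/2,\diam B\}$, choose $\delta$ so that $f^i(N(B,\delta))\subset N(B,\eps)$ for $i=1,2$, and take $A_{-k}\subset N(B,\delta)$. Since $f^k(A_{-k})=A$ meets $C$ (because $C$ is invariant, nondegenerate and in the $\alpha$-limit set of another branch of $A$) while $f^k(B)=B$ does not, some component $D$ of $\overline{A_{-k}\setminus B}$, an arc attached to an endpoint of $B$, satisfies $f^k(D)\cap C\neq\emptyset$. A short case analysis on which endpoint of $B$ lies in $f(D)$ and in $f^2(D)$ (possibly passing to the corresponding piece of $A_{-k+1}$) yields an arc $L_1\subset N(B,d/2)$ inside some element of the branch with $f^2(L_1)\supset L_1$. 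Otherwise the relevant arc together with $B$ would generate a forward-invariant set inside $N(B,\eps)$, so its images could never reach $C$. Since the rest of that branch element stays in $N(B,\eps)$ under all iterates, you get $L_1\subset f^{2t_1}(L_1)\subset A$ and $f^{2t_1}(L_1)\supset A\setminus N(B,d/2)$.

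Doing the same for $C$ gives $L_2$, hence $f^{2t_1}(L_1)\supset L_2$ and $f^{2t_2}(L_2)\supset L_1$. For $t=\max\{t_1,t_2\}$ the disjoint arcs $L_1,L_2$ form a horseshoe for $f^{2t}$.

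Your first step is correct and agrees with the paper: using Theorem~\ref{thm:alpha_char} (equivalently Lemma~\ref{lem:per_orbit}) after passing to an iterate gives branches of $A$ converging to $B$ and to $C$.
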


Therefore, by Theorems~\ref{thm:alpha_char} and~\ref{thm:pos_ent}, whenever we have an interval map $f\colon I\to I$ of zero topological entropy and $A\in\mathcal{C}(I)$, the orbits of any two nondegenerate periodic $B,C\in s\alpha(A)$ must intersect.

\medskip
\noindent \textbf{Organization of the paper.}
In Section~\ref{sec:2}, we recall basic definitions and preliminary results concerning interval dynamics, maximal $\omega$-limit sets, and hyperspaces. Section~\ref{sec:3} provides preparatory lemmas regarding fixed points and the fundamental behavior of the induced maps. In Section~\ref{sec:4}, we analyze the case of periodic nondegenerate intervals, proving that nondegenerate, asymptotically periodic elements of $\alpha$-limit sets in the induced system are strictly periodic. Section~\ref{sec:5} is devoted to the study of wandering intervals, where we construct an illustrative example using a tent map and formally describe the structure of such intervals. In Section~\ref{sec:6}, we synthesize the preceding results to completely characterize the $\alpha$-limit sets of backward branches in $\mathcal{C}(I)$, thereby proving Theorem~\ref{thm:alpha_char}. Finally, Section~\ref{sec:7} focuses on special $\alpha$-limit sets; we present several motivating examples and prove our main results concerning the closedness of $s\alpha(A)$ (Theorem~\ref{thm:special_closed}) and the sufficient condition for positive topological entropy (Theorem~\ref{thm:pos_ent}).

\section{Preliminaries}\label{sec:2}
\subsection{Dynamical systems}
	A (discrete, topological) \emph{dynamical system} is a pair $(X,f)$ where {$(X,\dist)$} is a compact metric space and $f\colon X\to X$ is a continuous map.
	We will often identify the dynamical system $(X,f)$ with the map $f$.
	Any continuous map $f\colon I\to I$, where $I$ is a compact interval, will simply be called an \emph{interval map}.
	For a point $x\in X$, we define its \emph{orbit} as the set $\mathcal{O}_f(x)=\{f^n(x)\colon n\geq 0\}$ and its \emph{trajectory} as the sequence $\left(f^n(x)\right)_{n\geq 0}$.
{Similarly, the \emph{orbit of a set} $A\subset X$ is the set $\mathcal{O}_f(A)=\bigcup_{n=0}^{\infty}f^n(A).$
	When the map $f$ is clear from the context, we simply write $\mathcal{O}(x)$.}
	A point $x\in X$ is said to be \emph{periodic} if there is some $p>0$ such that $f^p(x)=x.$
	The smallest such $p$ is called the \emph{period} of $x$.
	We denote the set of all periodic points of $f$ by $\Per(f)$.
	A point $x\in X$ is said to be \emph{eventually periodic} if there is some $n\geq 0$ such that $f^n(x)\in\Per(f).$ 
	Equivalently, a point is eventually periodic if and only if it has finite orbit.

	If $(X,f)$ and $(Y,g)$ are two dynamical systems, a \emph{semi-conjugacy} between $f$ and $g$ is any surjective continuous map $\phi\colon X\to Y$ such that $\phi\circ f=g\circ \phi$.
	If in addition $\phi$ is bijective, and hence a homeomorphism, then it is a \emph{conjugacy} between $f$ and $g$ and we say that these two dynamical systems are \emph{conjugate}.

	A set $A\subset X$ is said to be {\emph{$f$-invariant}} if $f(A)\subset A$ and {\emph{strongly $f$-invariant}} if $f(A)=A.$
	A closed invariant set $M\subset X$ without a closed and invariant proper subset is called \emph{minimal}.
	It is well known that a closed invariant set $M\subset X$ is minimal if and only if the orbit of each point $x\in M$ is dense in $M$.	
	If $X$ is minimal, we say that dynamical system $(X,f)$ is minimal or, simply,  
	that $f$ is \emph{minimal map}.
	
	The map $f\colon X\to X$ is \emph{transitive} if for every pair of nonempty open subsets $U, V\subset X$ there is some $n>0$ such that $f^n(U)\cap V\neq\emptyset$.
	If $X$ does not contain isolated points, then $f$ is transitive if and only if there is $x\in X$ whose orbit is dense in $X$.	
	A map $f\colon X\to X$ is \emph{mixing} if for every pair of nonempty open subsets $U, V\subset X$ there is an $N\geq 0$ such that $f^n(U)\cap V\neq \emptyset$ for all $n\geq N$.	
	It is clear that each mixing map is transitive but not vice-versa.
	For a point $x\in X$, the limit set of its trajectory is called \emph{$\omega$-limit set of $x$} and denoted by $\omega_f(x)$.
	In other words, $y\in\omega_f(x)$ if and only if there is a strictly increasing sequence of positive integers $(n_i)_{i\geq 0}$ such that $\left(f^{n_i}(x)\right)\to y$ as $i\to\infty$.
	A point $x\in X$ is \emph{non-wandering} if for every neighborhood $U$ of $x$ and every $N>0$ there is some $n>N$ such that $f^n(U)\cap U\neq\emptyset.$	Otherwise, we call it a \emph{wandering} point.
	A point $x\in X$ is \emph{recurrent} if $x\in\omega_f(x)$. 
	The set of all recurrent points is denoted by $\Rec (f)$ and we denote $\omega(f)=\bigcup_{x\in X}\omega_f(x).$
	We also define $\C(f)=\overline{\Rec(f)}$ and call this set the \emph{(Birkhoff) center} of $(X,f)$.	
    Finally, a point $x\in X$ is said to be \emph{asymptotically periodic} if $\omega_f(x)$ is a periodic orbit.
	
	A \emph{backward branch} of a point $x\in X$ is any sequence $\{x_i\}_{i\leq 0}$ in $X$ such that $x_0=x$ and $f(x_i)=x_{i+1}$ for each $i<0$. 
	A point $y$ belongs to the \emph{$\alpha$-limit set of a backward branch $\{x_i\}_{i\leq 0}$}, denoted by $\alpha_f\left(\{x_i\}_{i\leq 0}\right)$, if and only if there is a strictly decreasing sequence of negative integers $\{n_i\}_{i\geq 0}$ such that $x_{n_i}\to y$ as $i\to \infty$.
	It is easy to
see that both $\omega$-limit sets and $\alpha$-limit sets of backward branches are closed strongly
invariant sets.
Finally, a point $y$ belongs to the \emph{special $\alpha$-limit set of a point $x$}, also written as \emph{$s\alpha$-limit set} and denoted by $s\alpha_f(x)$, if and only if some backward branch of
$x$ has a subsequence $\{x_{n_i}\}_{i=0}^\infty$ such that $x_{n_i}\to y$.
If $f$ is clear from the context, we will simply write $\alpha\left(\{x_i\}_{i\leq 0}\right)$ and $s\alpha(x)$.
	
\begin{definition}
	Let $f\colon X\to X$ and $g\colon Y\to Y$ be two continuous maps of compact metric spaces $X,Y$ and {$M\subset X$} be a closed invariant set.
	A semi-conjugation $\phi\colon X\to Y$ is an \emph{almost conjugacy between $f\vert_M$ and $g$} if
	\begin{enumerate}[(1)]
		\item\label{almost1} $\phi(M)=Y$,
		\item $\forall y\in Y$, $\phi^{-1}(y)$ is connected,
		\item $\forall y\in Y$, $\phi^{-1}(y)\cap M=\partial\phi^{-1}(y)$, where $\partial A$ denotes the boundary of $A$ in $X$,
		\item\label{alomst4} $\exists N\geq 1$ such that $\forall y\in Y$, $\phi^{-1}(y)\cap M$ has at most $N$ elements (and, by \eqref{almost1}, at least one element).
	\end{enumerate}
\end{definition}

When $X$ and $Y$ are arcs, the last condition is a consequence of the previous ones.

A typical example of a dynamical system semi-conjugated to an irrational rotation is the Denjoy map (see~\cite[Example~14.9]{Devaney}; we comment on it more later when introducing circumferential sets). Another simple example is the truncated tent map $f$ on $[0,1]$ given for some $s>2$ by 
$$
f(x)=\begin{cases}
sx &, x\in [0,1/s]\\
s(1-x) &, x\in [1-1/s,1]\\
1 &, \text{otherwise}
\end{cases}
$$ 

Let $\phi\colon [0,1]\to [0,1]$ be a map that collapses the interval $J=[1/s,1-1/s]$, as well as all intervals in its preimages $f^{-n}(J)$, to single points. After a proper rescaling of the domain, $f$ transforms into the standard tent map  
$$
g(x)=\begin{cases}
2x &, x\in [0,1/2]\\
2(1-x) &, x\in [1/2,1]
\end{cases}.
$$ 
Here, $\phi$ acts as an almost conjugacy between $f\vert_M$ and $g$, where $M=[0,1]\setminus \bigcup_{n\geq 0}f^{-n}((1/s,1-1/s))$.

\begin{remark}
It is clear that if $f\vert_M$ and $g$ are almost conjugate by a map $\phi$, then $\phi|_M$ is a semi-conjugacy, but it is not an almost conjugacy except in the case that it is a
conjugacy itself.
\end{remark}

\subsection{Maximal \texorpdfstring{$\omega$}{omega}-limit sets}

We will often use an important property of interval maps, namely that every $\omega$-limit set of an interval map is contained in a maximal one.
By the results of Blokh~\cite{B_spectral}, there are exactly three types of these maximal $\omega$-limit sets: periodic orbits, basic sets and solenoidal $\omega$-limit sets. We briefly recall this result below.

The set $\mathcal{O}(K)=\bigcup_{i=0}^kf^i(K)$ is called a \emph{cycle of intervals of period $k$} if $K$ is a subinterval of $I$ such that $K,f(K),...,f^{k-1}(K)$ are pairwise disjoint and $f^k(K)=K.$  
A \emph{generating sequence} or a \emph{sequence generating a solenoidal set} is any nested sequence of cycles of intervals $K_1\supset K_2\supset\cdots$ for $f$ with periods tending to infinity. By definition $Q=\bigcap_n K_n$ is closed and strongly invariant, i.e. $f(Q)=Q$. Any closed and strongly invariant subset of $Q$ is called a \emph{solenoidal set}. The characterization of Blokh shows that $Q$ contains a perfect minimal set $Q_{min}=Q\cap\overline{\Per (f)}$ such that $Q_{min}=\omega(x)$, for all $x\in Q$, and a maximal $\omega$-limit set (with respect to inclusion) $Q_{max}$ such that $Q_{max}=Q\cap \omega(f)$ which we will refer to as \emph{maximal solenoid}. 

A \emph{basic set} is an $\omega$-limit set which is infinite, maximal among $\omega$-limit sets, and contains some periodic point. If $B$ is a basic set then with respect to inclusion there is a minimal cycle of intervals $K$ that contains it, and $B$ may be characterized as the set of those points $x \in K$ such that $\mathcal{O}(\overline{U}) = K$ for every relative neighborhood $U$ of $x$ in $K$, see~\cite{B_spectral}. Conversely, if $M$ is a cycle of intervals for $f$, then we will write
\[
B(M,f) = \{ x \in M : \mathcal{O}(\overline{U}) = M \text{ for any relative neighborhood } U \text{ of } x \text{ in } M \},
\]
and if this set is infinite, then it is a basic set.
When dealing with basic sets we will often use a model map, described in the following lemma. The reader is referred to~\cite{B_spectral} and~\cite{Forys} for more details.
\begin{lemma}\label{lem:modelBS}
	Let $f\colon I\rightarrow I$ be an interval map and $X\subseteq I$ be a cycle of intervals. Suppose that $B(X,f)$ is a basic set. 
	Then there is a transitive map $g\colon Y\rightarrow Y$, where $Y$ is a cycle of intervals $Y_0,\ldots, Y_{n-1}$ with possibly non-empty intersection in the endpoints, and $\phi\colon X\rightarrow Y$ which almost conjugates $f|_{B(X,f)}$ and $g$.
	 Moreover, $g^n|Y_i$ is mixing, for $i=0,\ldots,n-1$.
	  The period $n$ of $Y$ is either equal to or double the period of $X$ and $Y_i\cap Y_j=\End(Y_i)\cap \End(Y_j)\neq \emptyset$ if and only if $i\neq j$ and $i$ and $j$ are congruent modulo the period of $X$.	
\end{lemma}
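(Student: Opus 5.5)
This is the model-map lemma for basic sets (Lemma~\ref{lem:modelBS}), which the paper attributes to Blokh~\cite{B_spectral} and~\cite{Forys}; it is quoted rather than proved here, so I only sketch how I would rederive it.

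The plan is to collapse every "flat" piece of $X$ to a point. Write $B=B(X,f)$ and let $X=X_0\cup\dots\cup X_{k-1}$ be the cycle of intervals of period $k$.

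\textbf{Defining the collapsing map.} Call a closed subinterval $J\subset X$ inessential if its interior misses $B$. I would first show that the closures of the complementary components of $B$ in $X$, together with the singletons of $B$ not adjacent to such gaps, are the only candidates for fibres. Then I define $\phi$ by collapsing each maximal interval $J$ with $\Int J\cap B=\emptyset$ to a point. To make $\phi$ continuous onto an interval, I would use a nonatomic measure-like construction, namely a continuous nondecreasing function on each $X_i$ that is constant exactly on such gaps. Such a function exists because $B$ is perfect: an infinite basic set has no isolated points, since every point of $B$ is accumulated by the orbit of a transitive point.

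\textbf{Equivariance and the fibre conditions.} The key claim is that $f$ maps gaps into gaps or into points. Suppose $J$ is a gap and $f(J)$ contains a point of $B$ in its interior. Pick a relative neighbourhood $U\subset f(J)$ of that point. Then $\mathcal{O}(\overline U)=X$, so the orbit of $J$ covers $X$. On the other hand, points of $\Int J$ are not in $B$, so they have small neighbourhoods whose orbit closure is a proper subcycle. Reconciling these two facts is the main obstacle. I expect to handle it using the characterization of $B$ together with the fact that the endpoints of $J$ lie in $B$. With this claim, $g=\phi f\phi^{-1}$ is well defined and continuous, and fibres are connected by construction. The condition $\phi^{-1}(y)\cap B=\partial\phi^{-1}(y)$ follows because gap endpoints lie in $B$ by maximality. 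The bound on fibre cardinality is automatic for arcs, as the paper notes.

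\textbf{Transitivity, mixing and the period.} Transitivity of $g$ comes from $B$ being an $\omega$-limit set whose image is all of $Y$. Every open $V\subset Y$ pulls back to an open set meeting $B$ whose orbit covers $X$, so $g$ is transitive on $Y$. For an interval cycle, a transitive map either has $g^k|_{Y_i}$ mixing or has $g^k|_{Y_i}$ swapping two subintervals sharing a fixed endpoint; this is the standard dichotomy for transitive interval maps. In the second case I refine $Y$ into $2k$ intervals, giving period $n\in\{k,2k\}$ with $g^n|_{Y_i}$ mixing. In that doubled case the two halves of the same $X_i$ meet exactly at the common endpoint. This gives the stated intersection pattern: $Y_i\cap Y_j\neq\emptyset$ iff $i\neq j$ and $i\equiv j \pmod k$.
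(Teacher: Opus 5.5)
The paper does not prove this lemma; it is quoted from Blokh~\cite{B_spectral} (see also~\cite{Forys}), so your sketch has to stand on its own. Its architecture is the standard one: collapse the maximal intervals whose interiors miss $B$ by a monotone map, get transitivity of $g$ from the covering property $\mathcal{O}(\overline U)=X$, and use the mixing versus two-interval-swap dichotomy for $g^k|_{Y_i}$ to get $n\in\{k,2k\}$.

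However, the step everything rests on is not proved. You never show that $f$ maps each collapsed interval $J$ into a single fibre of $\phi$, i.e.\ that $\Int f(J)\cap B=\emptyset$. Without it $g=\phi f\phi^{-1}$ is not well defined, so the semi-conjugacy, the transitivity argument and the dichotomy all presuppose it. You flag this as the main obstacle and propose to use that the endpoints of $J$ lie in $B$, but that cannot do the job. First, $f$ may fold $J$, so the images of its endpoints give no control over the interior of $f(J)$. Second, for the end gaps you also collapse (maximal intervals $[p,b]$ with $p$ an endpoint of some $X_i$ and $p\notin B$; these occur, e.g., when $f$ is constant near $p$), one endpoint is not in $B$ at all.

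The missing idea is to pull back a point of $B$ that is not a turning value of $f$. Suppose $\Int f(J)\cap B\neq\emptyset$. This is a nonempty relatively open subset of the perfect set $B$, hence uncountable. The set of local extremum values of $f|_X$ is countable, since each such value is the maximum or minimum of $f$ over $X\cap(r,s)$ for some rationals $r<s$. So pick $w\in\Int f(J)\cap B$ which is not such a value, and $x\in\Int J$ with $f(x)=w$. Since $x$ is not a local extremum, for every small interval neighbourhood $U$ of $x$ in $X$ the interval $f(\overline U)$ contains points on both sides of $w$. Hence it contains $\overline V$ for some relative neighbourhood $V$ of $w$, and because $w\in B$ we get $\mathcal{O}(\overline U)\supseteq\mathcal{O}(f(\overline U))\supseteq\mathcal{O}(\overline V)=X$. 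Thus $x\in B$, contradicting $x\in\Int J$. Consequently $\Int f(J)$ lies in one component of $X\setminus B$ and $f(J)$ lies in one fibre; with this inserted, the rest of your outline goes through.

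One smaller correction: your justification of $\phi^{-1}(y)\cap B=\partial\phi^{-1}(y)$ by ``gap endpoints lie in $B$'' fails for end gaps. The condition still holds there, but only because $p$ is not a boundary point of $[p,b]$ relative to $X$.
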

\begin{remark}\label{rem:model_BS}
For an arbitrary basic set $\omega=B(X,f)$, under the notation from Lemma~\ref{lem:modelBS}, $X_{(i)}=\phi^{-1}(Y_i)$ defines a minimal (in the sense of inclusion) intervals $\{X_{(i)},i=1,2,\ldots,n\}$ with possibly nonempty intersections such that $\omega\subset \bigcup_i X_{(i)}$, 
the intervals $X_{(i)}$ are cyclically permuted by $f$,
and $m$ is such that $f^m\vert_{X_{(i)}}$ is almost conjugated to a mixing map.
Furthermore, for every $A\in\mathcal{C}(I)$ and every $i=1,2,\ldots,n$ such that {$A\subset X_{(i)}$} and $\Int A\cap \omega\neq\emptyset$, $\omega_{\tilde{f}}(A)={\{X_{(i)},\ i=1,2,\ldots,n\}}$.
\end{remark}

Let us also recall the following result which connects topological entropy to the maximal $\omega$-limit sets of interval maps (see e.g.~\cite{Schweizer}).
\begin{theorem}
    Let $f\colon I\to I$ be an interval map. Then the $\htop(f)>0$ if and only if $f$ has a basic set.
\end{theorem}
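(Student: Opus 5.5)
The plan is to prove both implications by combining Blokh's classification of maximal $\omega$-limit sets (periodic orbits, basic sets, solenoidal sets) with the model map from Lemma~\ref{lem:modelBS} and Misiurewicz's horseshoe theorem. Misiurewicz's theorem says that $\htop(f)>0$ if and only if some iterate $f^k$ has a horseshoe, i.e.\ disjoint closed intervals $J,K$ with $f^k(J)\cap f^k(K)\supset J\cup K$.

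\emph{A basic set forces positive entropy.} Let $B(X,f)$ be a basic set. Take the transitive model $g\colon Y\to Y$ and the almost conjugacy $\phi\colon X\to Y$ from Lemma~\ref{lem:modelBS}. Since $\phi$ is a surjective semi-conjugacy between $f|_X$ and $g$, we get $\htop(f)\ge \htop(f|_X)\ge \htop(g)$. We also have $\htop(g)=\frac1n\htop(g^n|_{Y_0})$, and $g^n|_{Y_0}$ is a mixing map of a nondegenerate interval; $Y_0$ is nondegenerate because $B(X,f)$ is infinite. For such a map, choose two disjoint nondegenerate closed subintervals $J,K\subset Y_0$. Mixing gives $N$ with $g^{nN}(J)\supset J\cup K$ and $g^{nN}(K)\supset J\cup K$. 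The mixing hypothesis supplies these covering relations directly for closed intervals, after shrinking slightly to interiors and using compactness of the images. This is a horseshoe, so $\htop(g^{nN}|_{Y_0})\ge\log 2$, and hence $\htop(f)>0$.

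\emph{Positive entropy forces a basic set.} Assume $\htop(f)>0$ and take a horseshoe $J,K$ for $f^k$. The standard coding argument gives a closed $f^k$-invariant set $\Lambda\subset J\cup K$ on which $f^k$ factors onto the full shift on two symbols, with the factor map finite-to-one off countably many points. Choose a point $x\in\Lambda$ whose itinerary is dense in the shift. Then $\omega_f(x)$ is infinite, and it contains periodic points, because the horseshoe produces periodic points of $f^k$ in every cylinder and these are accumulated by the orbit of $x$. By Blokh, $\omega_f(x)$ lies in a maximal $\omega$-limit set $M$, which is infinite and hence not a periodic orbit.

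It remains to exclude that $M$ is solenoidal. If $M\subset Q=\bigcap_n K_n$ for a generating sequence, then any periodic point in $Q$ would lie in cycles of intervals of arbitrarily large period. This is impossible for a point of fixed period, since the intervals of each cycle are pairwise disjoint. So $Q$ contains no periodic points, contradicting the presence of periodic points in $\omega_f(x)\subset M$. Thus $M$ is infinite, maximal, and contains a periodic point, i.e.\ $M$ is a basic set.

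\emph{Main obstacle.} I expect the delicate step to be the converse's claim that $\omega_f(x)$ contains a genuine periodic point of $f$. The horseshoe only gives a semi-conjugacy to the shift, so I would check that the periodic itineraries are realized by actual periodic points: each nested intersection of $f^k$-preimages for a periodic itinerary is a closed interval mapped into itself, so it has a fixed point of the corresponding iterate. I would also check that such a point lies in $\omega_f(x)$, by taking limits along times when the itinerary of $x$ agrees with the periodic word on longer and longer blocks and using compactness.
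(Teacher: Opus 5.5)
\textbf{Comparison with the paper.} The paper does not prove this statement; it quotes it from the literature. Your proposal therefore has to stand on its own.

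\textbf{First implication.} This direction is fine. You have $\htop(f)\ge\htop(f|_X)\ge\htop(g)\ge\frac1n\htop(g^n|_{Y_0})$. If you take $J,K$ inside $\Int Y_0$, then mixing together with connectedness of $g^{nm}(J)$ gives $g^{nm}(J)\supset J\cup K$ for all large $m$, and likewise for $K$. This is the horseshoe you need.

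\textbf{The gap in the converse.} The converse fails at the step ``$\omega_f(x)$ contains a periodic point''. Your exclusion of the solenoidal case rests entirely on this step.
\begin{itemize}
\item Visiting every cylinder does not mean accumulating on the periodic points in those cylinders. Since $f^k$ need not be expanding, the cylinder sets of $\Lambda$ need not shrink in diameter; an $f^k$-invariant subinterval of $\Int J$, or a homterval, lies in a single cylinder of every length.
\item Your compactness argument therefore only yields some $z\in\omega_f(x)$ whose itinerary is $w^\infty$. This $z$ need not be the periodic point you built from nested covering intervals, and need not be periodic at all.
\item What you actually get is that the orbit closure of $z$ under $f^{kp}$ lies in $\omega_f(x)\cap\pi^{-1}(w^\infty)$. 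A priori that set could carry, e.g., a solenoidal $\omega$-limit set of $f^{kp}$.
\item The claim that the coding map is finite-to-one off countably many points is unproved. It would not help anyway: periodic itineraries form a countable set, so the claim says nothing about their fibres. If some periodic fibre were finite you would be done, since $z$ would then be eventually periodic.
\end{itemize}

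\textbf{Repair (i): construct $x$ so that it accumulates at a specific periodic point.} Let $\mathrm{conv}(J\cup K)=[a,d]$ with $K$ to the right of $J$, and suppose there are $t_1<t_2$ in $J$ with $f^k(t_1)=a$ and $f^k(t_2)=d$.
\begin{itemize}
\item Let $y$ be the largest fixed point of $f^k$ in $[t_1,t_2]$. Then $f^k(t)>t$ on $(y,t_2]$.
\item Set $s_0=t_2$ and $s_{m+1}=\min\{t\in[y,s_m]\colon f^k(t)=s_m\}$. This gives $s_m\downarrow y$, with $f^k([s_{m+1},s_m])\supset[s_m,s_{m-1}]$ and $f^k([s_1,s_0])\supset K$.
\item By nested intervals, there is $x$ that follows these chains for lengths $m\to\infty$ and returns through $K$ between them. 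Then $y\in\omega_f(x)$ and $\omega_f(x)\cap K\neq\emptyset$. So $\omega_f(x)$ is infinite and contains the periodic point $y$.
\end{itemize}
The remaining orientation cases are handled as follows.
\begin{itemize}
\item If the full branch is orientation preserving in $K$ instead, use the mirror-image construction at the smallest fixed point there, escaping into $J$.
\item If the branch is orientation reversing in both $J$ and $K$, pass to $f^{2k}$, which then has an orientation-preserving full branch in $J$.
\end{itemize}

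\textbf{Repair (ii): avoid periodic points and exclude the solenoidal case by entropy.}
\begin{itemize}
\item We have $\pi(\omega_{f^k}(x))=\omega_\sigma(\pi(x))=\Sigma_2$, so $f$ restricted to the maximal set $M\supset\omega_f(x)$ has positive entropy.
\item On the other hand $\htop(f|_{Q_{max}})=0$. By Poincar\'e recurrence every invariant measure on $Q_{max}$ is carried by recurrent points. These lie in $Q_{min}$, since $\omega(y)=Q_{min}$ for all $y\in Q$, and $f|_{Q_{min}}$ is a bounded-to-one extension of an odometer.
\end{itemize}

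With either repair, the rest of your argument goes through: the maximal $\omega$-limit set $M$ is infinite and not solenoidal, hence basic by Blokh's trichotomy.
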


The following two theorems provide almost complete description of $\alpha$-limit sets of backward branches for the graph maps, showing their close connection with the $\omega$-limit sets.

\begin{theorem}\label{thm:maximal}\cite[Theorem~4.6]{Forys}
Let $f$ be a continuous map acting on a graph $G$. If a set $L$ is an $\alpha$-limit set of a backward branch $\{x_j\}_{j\leq 0}$ then $L$ is contained in a maximal $\omega$-limit set.
\end{theorem}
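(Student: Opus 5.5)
Since $L=\alpha(\{x_j\}_{j\leq 0})$ is a limit set of a backward branch, I would first extract two soft properties and then use Blokh's spectral decomposition to place $L$ inside a single maximal $\omega$-limit set.

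\emph{Soft properties of $L$.} The first property is that $L$ is closed and strongly invariant.

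The second is that $L\subset\Omega(f)$. If $y\in L$ and $U$ is a neighbourhood of $y$, choose $n_j<n_i$ with $x_{n_i},x_{n_j}\in U$. Then $f^{n_i-n_j}(x_{n_j})=x_{n_i}$, so $U$ returns to itself after arbitrarily long times.

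The third is that $L$ is internally chain transitive. Pieces of the backward branch $x_{n_j},\dots,x_{n_i}$ shadow $\eps$-chains between any two points of $L$, and these chains can be pushed into $L$ because the branch accumulates only on $L$.

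\emph{A dichotomy for cycles of graphs.} Let $X$ be a cycle of subgraphs, so $f(X)=X$, and suppose $L\cap\Int X\neq\emptyset$. Then $x_{n_i}\in X$ for infinitely many, arbitrarily negative $n_i$. By forward invariance $x_n\in X$ for every $n\geq n_i$, hence for every $n\leq 0$. So the whole branch, and therefore $L$, lies in $X$.

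Now consider the family of all cycles $X$ with $L\cap \Int X\neq\emptyset$. This family is nested up to iterates, so there are two cases.

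\emph{Case 1: the periods are unbounded.} Then we get a generating sequence with $L\subset Q=\bigcap_n K_n$. Here I would use the fact that nonwandering points of $Q$ lie in $Q_{max}=Q\cap\omega(f)$, which is Blokh's description of solenoidal sets, to conclude $L\subset Q_{max}$.

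\emph{Case 2: the periods are bounded.} Take a cycle $X$ of maximal period meeting $L$ in its interior. I want to show $L\subset B(X,f)$ when this set is infinite, and that $L$ is a periodic orbit otherwise. For $y\in L$ and a relative neighbourhood $U$ of $y$, suppose $\mathcal O(\overline U)\neq X$. Then $\mathcal O(\overline U)$ is a cycle of proper subgraphs containing points of $L$ in its interior. By the dichotomy it contains $L$, and so it is either a cycle of larger period, contradicting maximality, or it is $X$ itself. Hence every $y\in L$ has the defining property of $B(X,f)$. If $B(X,f)$ is finite, the chain transitivity of $L$ together with $L\subset\Omega(f)$ forces $L$ to be a single periodic orbit.

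\emph{Main obstacle.} The hardest step is handling points of $L$ that are not interior to any cycle of subgraphs. Such points arise at endpoints of cycles of intervals, and on graphs also in circle-like (circumferential) pieces where $f$ behaves like a rotation. For these I would use internal chain transitivity to show that $L$ cannot split between two distinct pieces of the spectral decomposition: distinct maximal $\omega$-limit sets are separated by wandering regions that a single backward branch can cross only finitely often. This forces such boundary points into the same maximal set as the rest of $L$. In the circumferential case, I would pass to the almost conjugacy with an irrational rotation and check directly that the limit sets of backward branches lie in the unique maximal $\omega$-limit set there.
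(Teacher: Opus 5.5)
The paper does not prove this statement; it imports it from \cite{Forys}, so your outline has to stand on its own. Your dichotomy for forward-invariant sets is the right tool, but two steps fail as written.

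\textbf{Case 2.} The alternative ``$\mathcal{O}(\overline U)$ is either a cycle of larger period or $X$ itself'' is false: $\mathcal{O}(\overline U)$ can be a proper subcycle of the \emph{same} period. Moreover, $\overline{\mathcal{O}(\overline U)}$ is a priori only a forward-invariant finite union of intervals, not a cycle in Blokh's sense. Likewise, the family of cycles meeting $L$ in their interiors is not nested in general, even up to iterates. You need three things:
\begin{enumerate}
\item Take $X$ of maximal period and \emph{minimal by inclusion} among cycles containing $L$. Nested intersections of such cycles are again cycles of the same period containing $L$, because $f(\bigcap_\alpha X_\alpha)=\bigcap_\alpha f(X_\alpha)$ and each component contains infinitely many points of $L$.
\item Extract from $\overline{\mathcal{O}(\overline U)}$ a genuine cycle containing $L$. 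Keep the components meeting $L$; they are permuted cyclically since $f(L)=L$ and $L$ is chain transitive. Then pass to $\bigcap_n f^n$ of their union.
\item Observe that the period of this cycle is a multiple of that of $X$. Maximality and minimality then force it to equal $X$.
\end{enumerate}

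\textbf{The final paragraph.} The step you call the hardest is not an argument, and the tool it proposes cannot work. Being closed, strongly invariant, internally chain transitive and contained in $\Omega(f)$ does not imply lying in a maximal $\omega$-limit set. For $f=\mathrm{id}_I$, the whole interval $I$ has all four properties, yet every maximal $\omega$-limit set is a singleton. The same example, or two basic sets sharing a periodic point, shows that distinct maximal $\omega$-limit sets need not be separated by wandering regions.

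The missing idea is to use the branch at \emph{every} point of $L$, not only at points interior to some cycle. If $y\in L$ and $U$ is any relative neighbourhood of $y$ in $X$, then $x_{n_i}\in U$ for arbitrarily negative $n_i$. Hence every $x_m$ lies in $\mathcal{O}(U)$, and so $L\subset\overline{\mathcal{O}(\overline U)}$. No interiority is needed, so endpoints of $X$ are handled exactly like interior points, and for interval maps your ``main obstacle'' disappears.

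The same device also gives Case 1 without the quoted claim about nonwandering points of $Q$. The whole branch lies in $Q$. Each nondegenerate component of $Q$ is wandering, so the branch visits it at most once. These components shrink, and each meets $Q_{\min}$, since it is a nested intersection of intervals each containing a periodic point. Therefore $L\subset Q_{\min}$.

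On graphs, the circumferential case you postpone should be handled the same way. Use the fact that the nondegenerate fibres of the almost conjugacy to an irrational rotation are wandering, rather than appealing to chain transitivity.
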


\begin{theorem}\label{thm_alpha_zero_ent}\cite{Forys}
Let $f$ be a continuous map acting on a graph $G$ with $\htop(f) = 0$. Then a set $L$ is an $\alpha$-limit set of a backward branch $\{x_j\}_{j\leq 0}$ if and only if $L$ is a minimal set.
\end{theorem}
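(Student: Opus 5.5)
The proof splits into the two implications. I would handle minimal sets via the natural extension, and the converse via the structure theory of maximal $\omega$-limit sets together with the absence of basic sets in zero entropy.

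\emph{Minimal sets are $\alpha$-limit sets.} Let $M$ be minimal. Since $M$ is closed and invariant with dense orbits, $f(M)=M$, so the inverse limit $\widehat M=\varprojlim(M,f|_M)$ is nonempty. The shift $\hat f$ is a homeomorphism of $\widehat M$, and the inverse limit of a minimal surjective system is again minimal; I would quote this standard fact, or prove it directly by checking that the projections of any closed $\hat f$-invariant subset are closed invariant subsets of $M$, hence all of $M$. For a minimal homeomorphism every backward orbit is dense. So pick any $\hat x=(x_0,x_{-1},\dots)\in\widehat M$. Its backward $\hat f$-orbit is dense in $\widehat M$, and projecting to the $0$-th coordinate shows that $\{x_j\}_{j\le 0}$ is a backward branch of $x_0$ with $\alpha(\{x_j\}_{j\le0})=M$. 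This direction does not use $\htop(f)=0$.

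\emph{$\alpha$-limit sets are minimal.} Let $L=\alpha(\{x_j\}_{j\le0})$. By Theorem~\ref{thm:maximal}, $L$ lies in a maximal $\omega$-limit set. Since $\htop(f)=0$, $f$ has no basic sets (the graph analogue of the entropy--basic set theorem), so by Blokh's classification this maximal set is either a periodic orbit, a solenoidal set $Q_{max}$, or (for graphs) a circumferential set carrying an irrational-rotation-like minimal set.
\begin{itemize}
\item \emph{Periodic orbit $P$.} $L$ is a closed strongly invariant subset of $P$, hence $L=P$, which is minimal.
\item \emph{Circumferential case.} This is handled like the solenoid below: the almost conjugacy to an irrational rotation collapses the non-minimal part to the gaps of a Cantor set.
\item \emph{Solenoidal case.} I have to show $L\subset Q_{min}$. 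Then $L$, being closed and invariant inside the minimal set $Q_{min}$, equals $Q_{min}$.
\end{itemize}

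For $L\subset Q_{min}$ I would use the generating cycles $K_1\supset K_2\supset\cdots$. First I would show that for each $n$ the tail of the branch eventually stays inside $K_n$. The branch accumulates on $L\subset K_n$, and a point outside $K_n$ whose image lands back in $K_n$ can only do so finitely often along a backward branch without creating a horseshoe, since $K_n$ is a cycle of intervals and entropy is zero. Once the tail lies in $K_n$, the index $j$ modulo the period of $K_n$ determines which component contains $x_j$. Consequently every point of $L$ is approximated by $x_j$ lying in the same nested components as $x_{j'}$ for $j\equiv j'$ modulo arbitrarily large periods. Passing to the odometer factor, $L$ is the full preimage-fiber configuration over the whole odometer. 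Each fiber is a component of $\bigcap_n K_n$, and I would show that $L$ meets each nondegenerate fiber only in its endpoints, which lie in $Q_{min}$. The mechanism is that interior points of fibers are nonrecurrent and wander off under backward iteration in a way that forces accumulation only at endpoints.

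\emph{Main obstacle.} The hardest step is the last one: excluding nonrecurrent points of $Q_{max}\setminus Q_{min}$ from $L$. This is exactly the phenomenon that, by Hant\'akov\'a--Roth, breaks closedness of $s\alpha$. Here the difference is that we consider a single branch, which is rigidly synchronized with the odometer. My first attempt would be to show that if $y\in L$ lies in the interior of a nondegenerate fiber $J$, then infinitely many $x_j$ lie in $J$ itself or in a preimage fiber of $J$ that maps onto $J$. Zero entropy then forces these preimage fibers to be eventually periodic, contradicting the injectivity of the odometer.
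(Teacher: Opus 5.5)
The paper gives no proof of this statement; it is quoted from \cite{Forys}, so your proposal has to stand on its own. The direction ``minimal $\Rightarrow$ $\alpha$-limit set'' via the natural extension is correct, and so is the periodic-orbit case. The gap is the solenoidal case, and with it the circumferential case, which you reduce to it. This is the whole content of the converse, and several of your intermediate claims there fail.

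\emph{The entry into $K_n$.} Since $f(K_n)=K_n$, the set $\{j\le 0\colon x_j\in K_n\}$ is upward closed, so a branch enters $K_n$ at most once, whatever the entropy. ``The tail eventually stays in $K_n$'' therefore just means that the whole branch lies in $K_n$, and the horseshoe remark says nothing about this.

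\emph{The endpoints of components.} These need not lie in $Q_{min}$. For a component $J$ of $Q$, no point of $\Int J$ belongs to $\omega(f)$, since two returns to $\Int J\subset Q$ would give a periodic point of the odometer. So $Q_{max}\setminus Q_{min}$, the very set you single out as the obstacle, consists of such endpoints. Showing that $L$ meets components only at endpoints would thus not finish the proof.

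\emph{The remaining steps.} Your ``first attempt'' treats interior points, which $L\subset Q_{max}$ already excludes. The final appeal to zero entropy forcing preimage fibres to be eventually periodic cannot work, because no component of $Q$ is eventually periodic.

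\emph{The missing argument.} It is short and does not use entropy. Let $\phi\colon Q\to\Omega$ collapse the components of $Q$ onto the odometer $(\Omega,\tau)$.
\begin{enumerate}
\item Since $\phi(L)$ is closed and $\tau$-invariant, $\phi(L)=\Omega$, so $L$ is infinite.
\item If some $x_j\notin K_n$, then $x_i\notin K_n$ for all $i\le j$, hence $L\subset\overline{G\setminus K_n}\cap K_n$, which is a finite set. So the whole branch lies in $Q$.
\item Then $\phi(x_j)=\tau^{j}(\phi(x_0))$. As $\tau$ has no periodic points, the $x_j$ lie in pairwise distinct components of $Q$, and their diameters tend to $0$ because disjoint subcontinua of a graph form a null family.
\item Every component meets $Q_{min}$, because $\phi(Q_{min})=\Omega$ by the same invariance argument.
\item Hence each accumulation point of $\{x_j\}$ is a limit of points of $Q_{min}$, i.e.\ $L\subset Q_{min}$, and minimality gives $L=Q_{min}$.
\end{enumerate}
The circumferential case is identical, with the irrational rotation in place of the odometer. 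Zero entropy is needed only to exclude basic sets.
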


{
\begin{remark}\label{rem:alpha}
By Theorem~\ref{thm_alpha_zero_ent}, for interval maps having zero entropy, $\alpha$-limit sets are exactly periodic orbits and minimal solenoidal sets.

On the other hand, in the case of interval maps having basic sets, the situation is not completely clear.
	However, by the results of~\cite{Forys}, for any graph map, whether or not it had zero entropy, the $\alpha$-limit set of any backward branch is the $\omega$-limit set of some point.
 
\end{remark}
}
\subsection{Hyperspaces}

	A \emph{hyperspace} of a metric space $(X,\dist)$ is a specified family of nonempty closed subsets of $X$. 
	The hyperspaces which will be of our interest are $2^X$, the hyperspace of all nonempty compact subsets of $X$ and $\mathcal{C}(X)$ which is the hyperspace of all continua, i.e. the connected elements of $2^X$.
	For a point $x\in X$ and a nonempty subset $A\subset X$ we define the \emph{distance from the point $x$ to the set $A$} as $\dist(x,A)=\inf\{\dist(x,y)\colon y\in A\}$
	and for each nonempty set $A\subset X$ and each $\eps>0$, we define the \emph{$\eps$-neighborhood of the set $A$} as $N(A,\eps)=\{x\in X\colon \dist(x,A)<\eps\}$.
	We can now endow $2^X$ with a function $\dist_H\colon 2^X\times 2^X\to\mathbb{R}$, defined for each pair $A,B\in 2^X$ as:
	\[\dist_H(A,B)=\inf\{\eps\geq 0\colon {A\subset N(B,\eps)}\text{ and }B\subset N(A,\eps)\}.\] 
	It is well known that $\dist_H$ is a metric, called \emph{the Hausdorff metric}, and that $(2^X,\dist_H)$ is a compact metric space (e.g., see \cite{Nadler}). 
	It is also well known that $\mathcal{C}(X)$ is a compact subset of $2^X$. 
	The topology generated by $\dist_H$ does not depend on the metric $\dist$ but only on the topology that $\dist$ generates on $X$.

	If $f\colon X\to X$ is a continuous map, then it naturally extends to $\bar{f}\colon 2^X\to 2^X$ via $\bar{f}(A)=f(A)$.
	It is well known that $\bar{f}$ defined this way is a continuous map on $2^X.$
	Obviously, $\mathcal{C}(X)$ is $\bar{f}$-invariant subset of $2^X$ and we may denote $\tilde{f}=\bar{f}\vert_{\mathcal{C}(X)}$.
	This way we obtain two \emph{induced systems} $(2^X,\bar{f})$ and $(\mathcal{C}(X),\tilde{f})$ generated by $(X,f)$.
	Observe that $(X,f)$ may be considered a subsystem (that is, a closed invariant subset) of $(\mathcal{C}(X),\tilde{f})$ by identifying $x\in X$ with $\{x\}\in \mathcal{C}(X)$,
	and thus also a subsystem of $(2^X,\bar f)$. 
	{Note that $\mathcal{O}_f(A)\neq \mathcal{O}_{\tilde f}(A)$, because the first orbit is a subset of $X$, while the second one is a set of subsets of $X$.}
 Finally, we say that $A\in\mathcal{C}(X)$ is \emph{asymptotically degenerate} if $\diam f^n(A)\to 0$ when $n\to\infty$.

Throughout the paper, we will be using the following result, which is a direct consequence of~\cite[Theorem 1.1]{Jelic2}.
\begin{theorem}\label{thm:JO2}
Let $f\colon I\to I$ be an interval map.
Then, for every $A\in\mathcal{C}(I)$  (at least) one of the following properties holds:
\begin{enumerate}[(i)]
\item\label{asy_per} $A$ is asymptotically periodic, or
\item\label{wandering} $A$ is wandering, i.e. $f^k(A)\cap f^j(A)=\emptyset$ for all $j\neq k$.
\end{enumerate}
\end{theorem}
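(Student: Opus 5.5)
The plan is to show that if $A$ is not wandering, then $A$ is asymptotically periodic; that is, $\omega_{\tilde f}(A)$ is a periodic orbit of $\tilde f$. Degenerate $A$ reduces to the point case, where $f^j(x)=f^k(x)$ for some $j<k$ makes $x$ eventually periodic. So assume $A$ is nondegenerate and $f^j(A)\cap f^k(A)\neq\emptyset$ for some $j<k$. Put $p=k-j$, $B=f^j(A)$, $g=f^p$ and $B_i=g^i(B)=[a_i,b_i]$. Since $g^i(B\cap g(B))\subset B_i\cap B_{i+1}$, consecutive sets $B_i,B_{i+1}$ intersect. Hence $W=\bigcup_{i\ge 0}B_i$ is connected, and $K=\overline W$ is an interval with $g(K)\subset K$. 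It then suffices to show that $\omega_{\tilde g}(B)$ is a $\tilde g$-periodic orbit, because the $f$-orbit of a $g$-periodic orbit is an $\tilde f$-periodic orbit.

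\emph{Case 1: $\diam B_i\to 0$.} Pick $x\in B$. Since $B_i\cap B_{i+1}\neq\emptyset$, we get $|g^{i+1}(x)-g^i(x)|\le \diam B_i+\diam B_{i+1}\to 0$. A sequence with steps tending to $0$ has a connected set of limit points, so $\omega_g(x)$ is an interval. By continuity every $y\in\omega_g(x)$ satisfies $g(y)=y$. If $\omega_g(x)$ were nondegenerate, the orbit would have to oscillate across an interval of fixed points. This is impossible: once $g^i(x)$ lands near a fixed point of that interval with small steps, it cannot cross fixed points that lie in the interior of its range of travel, since a fixed point $z$ strictly between $g^i(x)$ and the fixed set would force a return. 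I would make this rigorous with the standard monotonicity lemma for interval maps: if $g^i(x)<z<g^{i+1}(x)$ then $z$ is not fixed on that side. Hence $g^i(x)\to z\in\Fix(g)$, so $B_i\to\{z\}$ in $\mathcal{C}(I)$, and $\{z\}$ is a $\tilde g$-fixed point.

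\emph{Case 2: $\diam B_i\ge\delta>0$ for infinitely many $i$.} This is the main obstacle. Among finitely many disjoint intervals of length $\ge\delta$, some two of these long iterates must overlap in a subinterval. I would first treat the subcase where some $B_i$ has interior meeting a basic set $\omega=B(X,f)$ (with $B_i$ inside one of the $X_{(i)}$ after a further iterate). Then Remark~\ref{rem:model_BS} gives directly that $\omega_{\tilde f}(B_i)$ is the periodic orbit $\{X_{(i)}\}$. Otherwise, the interiors of the $B_i$ avoid all basic sets. In that situation I would use Blokh's decomposition: the long intervals cannot meet solenoidal sets in their interiors without shrinking, since the generating cycles of intervals have periods tending to infinity and so their lengths go to $0$. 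The dynamics of $g$ on $K$ relevant to $B$ is then governed by periodic orbits. I would then argue that the endpoints $a_i,b_i$ are eventually monotone along some iterate $g^q$, using that $g^q$ maps $B_i$ into an interval overlapping it with no turning behaviour producing entropy. Consequently $B_{mq+r}$ converges in the Hausdorff metric to intervals $D_r$ with $g^q(D_r)=D_{r+1\bmod q}$, i.e.\ a periodic orbit of $\tilde g$.

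The delicate point is exactly this last monotonicity or convergence argument in the zero-entropy-like part. If a direct argument fails, the fallback is to invoke the full classification of $\omega$-limit sets of $\tilde f$ from \cite[Theorem 1.1]{Jelic2}, from which the dichotomy follows immediately, as indicated in the statement.
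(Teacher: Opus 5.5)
The paper gives no argument of its own for this statement; it cites it as a direct consequence of \cite[Theorem~1.1]{Jelic2}. Your fallback is therefore exactly the paper's route. Your Case~1 is correct: it shows that an asymptotically degenerate, non-wandering $A$ is asymptotically periodic, which is precisely the bridge from an ``asymptotically degenerate or asymptotically periodic'' dichotomy to the stated ``wandering or asymptotically periodic'' form. You can simplify its justification and drop the ``monotonicity lemma''. If $\omega_g(x)$ were a nondegenerate interval of $g$-fixed points, some $g^n(x)$ would lie in its interior. That point is then fixed, which forces $\omega_g(x)=\{g^n(x)\}$.

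Taken as a self-contained proof, however, Case~2 has a genuine gap, and it is where all the substance lies. You must show that a non-wandering $A$ with $\diam f^n(A)\not\to 0$ is asymptotically periodic. In particular, $\omega_{\tilde g}(B)$ must not contain both singletons and nondegenerate continua; note that ``$\diam B_i\ge\delta$ infinitely often'' still allows $\liminf_i\diam B_i=0$. None of the proposed steps establishes this.

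\emph{Reduction to the Remark.} Remark~\ref{rem:model_BS} needs $B_i\subset X_{(l)}$, and $\Int B_i\cap\omega\neq\emptyset$ does not place any iterate of $B_i$ inside a single $X_{(l)}$. For example, glue two tent maps at the common fixed point $0$ on $[-1,1]$. Then $[-\eps,\eps]$ meets both basic sets in its interior, yet no image of it lies in one cycle interval. So this reduction fails even where the conclusion happens to hold.

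\emph{Solenoid claim.} The claim that long intervals cannot meet solenoidal sets in their interiors without shrinking is false. Take the Feigenbaum logistic map, which has zero entropy and hence no basic sets, so it falls in your remaining subcase. There $I$ has solenoidal points in its interior while $\diam f^n(I)\not\to 0$. Periods tending to infinity only make most components of $K_n$ short; they say nothing about an interval that covers several components.

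\emph{Endpoint monotonicity.} The eventual monotonicity of $a_i,b_i$ along some $g^q$ is asserted, not proved. Absence of basic sets in $\Int B_i$ gives no control over turning points of $g$ on $B_i$. Even when all images contain a common invariant interval, you would still have to exclude an infinite family of pairwise non-nested images; compare the work needed in Lemma~\ref{lem:no_overlap}.

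So, without the citation, your proposal does not prove the statement.
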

\section{Preparatory lemmas}\label{sec:3}

\begin{lemma}\label{lem:nofix}
Let $I=[0,1]$, $a,b\in I$ and $f\colon I\to I$ continuous.
If $(a,b)\cap\Fix(f)=\emptyset$ then $f(x)< x$ for all $x\in (a,b)$ or $f(x)>x$ for all $x\in (a,b)$.
\end{lemma}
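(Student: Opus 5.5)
Consider the auxiliary function $g(x)=f(x)-x$ on $[0,1]$. It is continuous as a difference of continuous functions. By definition, $x\in\Fix(f)$ if and only if $g(x)=0$. So the hypothesis $(a,b)\cap\Fix(f)=\emptyset$ says exactly that $g$ has no zero on $(a,b)$. We may assume $a<b$; otherwise $(a,b)$ is empty and the statement holds vacuously.

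We argue by contradiction. Suppose the conclusion fails. Then there are points $x_1,x_2\in(a,b)$ with $g(x_1)\ge 0$ and $g(x_2)\le 0$. Since $g$ does not vanish on $(a,b)$, both inequalities are in fact strict: $g(x_1)>0$ and $g(x_2)<0$. In particular $x_1\neq x_2$.

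The closed interval with endpoints $x_1$ and $x_2$ lies inside $(a,b)$, because $(a,b)$ is convex. Applying the intermediate value theorem to $g$ on this interval gives a point $z$ between $x_1$ and $x_2$ with $g(z)=0$. Then $z\in(a,b)$ and $f(z)=z$, which contradicts $(a,b)\cap\Fix(f)=\emptyset$.

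Therefore $g$ has constant sign on $(a,b)$. That is, either $f(x)>x$ for all $x\in(a,b)$, or $f(x)<x$ for all $x\in(a,b)$.

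The argument is elementary and has no real obstacle. The only point needing attention is that the interval between $x_1$ and $x_2$ stays inside the open interval $(a,b)$, which follows from convexity.
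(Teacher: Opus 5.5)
Your proof is correct and follows the same approach as the paper: the paper's one-line proof is exactly continuity of $g(x)=f(x)-x$, and you have simply written out the intermediate value argument in full, including the degenerate case $a\ge b$.
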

\begin{proof}
    Follows from continuity of $g\colon I\to \mathbb{R}$, $g(x)=f(x)-x$.
\end{proof}

\begin{lemma}\label{lem:unique_fixed}
    Let $f\colon I\to I$ be a continuous map of $I=[0,1]$.
    Let $A\in\mathcal{C}(I)$ be a fixed point of $\tilde{f}$.
    If there is a unique fixed point $z\in A$ such that $\omega_f(x)=\{z\}$, for all $x\in A$, then $A=\{z\}$.
\end{lemma}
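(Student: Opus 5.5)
Since $A\in\mathcal{C}(I)$ is a compact subinterval, write $A=[a,b]$ and suppose, towards a contradiction, that $a<b$. The invariance $\tilde f(A)=A$ means $f(A)=A$, so the dynamics of $f$ restricted to $A$ is a continuous surjection of $[a,b]$ onto itself. Using Lemma~\ref{lem:nofix} and the hypothesis that $z$ is the unique fixed point of $f$ in $A$, we will show that some point of $A$ cannot converge to $z$. That contradicts $\omega_f(x)=\{z\}$ for all $x\in A$.

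First, $z\in A$, and $z$ is the only fixed point of $f$ in $[a,b]$. Since $a<b$, at least one of the intervals $[a,z)$, $(z,b]$ is nonempty; say $[a,z)\neq\emptyset$, so $a<z$ (the other case is symmetric). On $(a,z)$ there are no fixed points, so by Lemma~\ref{lem:nofix} either $f(x)<x$ on all of $(a,z)$, or $f(x)>x$ on all of $(a,z)$.

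\textbf{Case $f(x)<x$ on $(a,z)$.} Pick $x\in(a,z)$. If the trajectory of $x$ stayed in $[a,z)$ forever, then, as long as it lies in $(a,z)$, it is strictly decreasing. Its limit would then be at most $x<z$, which is incompatible with $\omega_f(x)=\{z\}$. Hence either the orbit reaches $a$, or it leaves $[a,z]$ at some step. It cannot leave $A$, since $f(A)=A$. So at some time it jumps to $[z,b]$ or hits $a$.

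To avoid a delicate analysis of these jumps, I would argue more globally using surjectivity. Since $f(A)=A$, there is $c\in A$ with $f(c)=a$.
\begin{itemize}
\item If $c=a$, then $a$ is a fixed point different from $z$, a contradiction.
\item If $a<c<z$, the case assumption gives $a=f(c)<c$, which is fine. So I instead look at the point $a$ itself. If $f(a)\neq a$, then $f(a)>a$. Continuity and $f(x)<x$ near $a^+$ give $f(a)\le a$. Together these force $f(a)=a$, again a contradiction.
\end{itemize}
So in this case $f(a)\le a$ and $f(a)\ne a$ is impossible, meaning $f(a)=a$ is a second fixed point. This rules out the case $f(x)<x$ on $(a,z)$ entirely.

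\textbf{Case $f(x)>x$ on $(a,z)$.} Take $x\in(a,z)$. The point $f(x)$ lies strictly to the right of $x$ but may overshoot $z$, so monotone convergence is not automatic. Here I would use the set $D=\{y\in A: f(y)=a\}$, which is nonempty by surjectivity, together with the symmetric information on $(z,b]$.

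If $b>z$, Lemma~\ref{lem:nofix} again gives a sign on $(z,b)$. By the same endpoint argument, $f(x)<x$ there would force $f(b)=b$; if instead $f(x)>x$ on $(z,b)$, then continuity at $b$ gives $f(b)\ge b$, so $f(b)=b$, again contradicting uniqueness. Hence $f(x)<x$ on $(z,b)$, meaning $z$ attracts from both sides. Then $a$, which has preimages, must be the image of some $c$ with $f(c)=a<c$, so $c\in(z,b]$. Similarly $b=f(d)$ with $d\in[a,z)$.

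Now $z$ is a repelling-from-outside situation folded over: I would find a compact subinterval $J\subset A\setminus\{z\}$ that $f^2$ maps over itself, giving a fixed point of $f^2$, i.e.\ a point of period $\le 2$ other than $z$. Its $\omega$-limit set is not $\{z\}$, which is the desired contradiction. If $b=z$, then $f(x)>x$ on $(a,z)$ forces $f(A)\subset$... and $a$ has no preimage with $f(c)=a$: for $c\in(a,z)$ we have $f(c)>c>a$, and $c=z$ gives $f(z)=z\ne a$. So $c=a$, a fixed point, a contradiction.

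\textbf{Main obstacle.} The main obstacle is this two-sided case, where I need to construct the $f^2$-covering interval $J$ between the preimages $c$ and $d$. I would try $J=[d',c']$, where $d'\in[a,z)$ and $c'\in(z,b]$ are chosen as the preimages of the endpoints closest to $z$, so that $f(J)\supseteq A\supseteq J$. An intermediate value argument on $f^2(y)-y$ would then produce the needed extra periodic point, though $J$ contains $z$ and care is needed to ensure the fixed point obtained differs from $z$.
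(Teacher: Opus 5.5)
Your one-sided analysis is sound. On each side of $z$ the sign of $f(x)-x$ is constant by Lemma~\ref{lem:nofix}. If points move toward an endpoint of $A$, continuity there together with $f(A)\subset A$ makes that endpoint fixed. The case $b=z$ is also handled correctly via the preimage of $a$. (One slip: "$f(x)<x$ there would force $f(b)=b$" is false as written; only $f(x)>x$ on $(z,b)$ forces this. Your conclusion $f(x)<x$ on $(z,b)$ is nonetheless right.)

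\textbf{The gap.} The essential case is $a<z<b$ with $f(x)>x$ on $[a,z)$ and $f(x)<x$ on $(z,b]$, and you do not prove it. You first promise an interval $J\subset A\setminus\{z\}$ that $f^2$ covers. You then propose $J=[d',c']$, which contains $z$. For such $J$, $f^2(J)\supseteq J$ only yields an $f^2$-fixed point in $J$, and $z$ is always one. You flag that care is needed but supply no argument, so the crux of the lemma is left open.

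\textbf{The missing idea.} Anchor the intermediate value argument at an endpoint of $A$, exactly as you already did for $f$. Take $d\in[a,z)$ with $f(d)=b$ and $c\in(z,b]$ with $f(c)=a$.
\begin{itemize}
\item Since $f(d)=b\ge c>z=f(z)$, there is $y\in[d,z)$ with $f(y)=c$, so $f^2(y)=a\le y$.
\item Since $f^2(a)\in A$, we have $f^2(a)\ge a$.
\item So $f^2(x)-x$ vanishes at some $w\in[a,y]$. This $w\le y<z$ is a periodic point with $\omega_f(w)=\mathcal{O}_f(w)\neq\{z\}$, a contradiction.
\end{itemize}

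\textbf{Comparison with the paper.} The paper organizes the same idea symmetrically with $L_1=[a,z]$ and $L_2=[z,b]$. Because $f(z)=z$ and $f(A)=A$, either $f(L_i)\supset L_i$ for some $i$, or $f(L_1)\supset L_2$ and $f(L_2)\supset L_1$, giving $f^2(L_1)\supset L_1$. The hypotheses pass to $f^n$, so your endpoint argument applied to $f^n$ with $n=1,2$ shows no $L_i$ can $f^n$-cover itself: its outer endpoint would become $f^n$-fixed. Using arcs that have $z$ as an endpoint, rather than in the interior, is precisely what avoids the problem you ran into.
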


\begin{proof}
    Suppose on the contrary, that $A$ is nondegenerate.
    Therefore, $A$ has endpoints $\{e_i\colon 1\leq i\leq m\}$ different than $z$ for $m=1$ or $m=2$, depending on whether $z$ is an endpoint of $A$ or not.

    Denote by $\{L_i\colon 1\leq i\leq m\}$ the subarcs of $A$, each $L_i$ having endpoints $z$ and $e_i$.
    Note that the assumptions on $A$ stand if we replace $f$ with $f^n$ for each $n\geq 1$.
     In particular, for every $n\geq 0$ and $i,\ 1\leq i\leq m$, $\Int L_i\cap\Fix\left(f^ n\right)=\emptyset$.
    By this, and by Lemma~\ref{lem:nofix}, $f^n(L_i)\not\supset L_i$ for all $n\geq 1$.

    On the other hand, since $f(A)=A$, we either have $f(L_1)\supset L_1$ or $m=2$, $f(L_1)\supset L_2$ and $f(L_2)\supset L_1$ {or $m=2$, $f(L_2)\supset L_2$}.
    In 
    {all }cases, $f^m(L_1)\supset L_1$, contradicting what has just been proven. Indeed, $A=\{z\}$.
    \end{proof}

\begin{lemma}\label{lem:fixed_sub_fixed}
Let $I=[0,1]$, $f\colon I\to I$ continuous and $A,B\in\mathcal{C}(I)$ two fixed points of $\tilde{f}$ such that $A\setminus B\neq\emptyset$.
Then $\Per(f)\cap (A\setminus B)\neq\emptyset$.
\end{lemma}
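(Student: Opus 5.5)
The plan is to use that $A$ and $B$ are subintervals, so $A\setminus B$ has at most two components, and to look for a fixed point of $f$ or of $f^2$ in one of them, using Lemma~\ref{lem:nofix} together with $f(A)=A$ and $f(B)=B$. Write $A=[a_1,a_2]$. If $B$ is degenerate or $A\cap B=\emptyset$, it is easier to work directly with $A$; in particular, if $A\cap B=\emptyset$ then $f(A)=A$ forces a fixed point of $f$ in $A=A\setminus B$ and we are done. So assume $A\cap B\neq\emptyset$ and write $B=[c,d]$. Then $A\setminus B$ consists of $[a_1,c)$ (when $a_1<c$) and/or $(d,a_2]$ (when $d<a_2$), and at least one of these is nonempty.

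\emph{Case of one component.} Say $a_1<c$ and $a_2\le d$ (the symmetric case is identical). If $[a_1,c)$ contains a fixed point we are done. Otherwise $f(a_1)\neq a_1$, and since $f(a_1)\in A$ we have $f(a_1)>a_1$. By Lemma~\ref{lem:nofix} applied on $(a_1,c)$, and by continuity at $a_1$, we get $f(x)>x$ on all of $[a_1,c)$. Since $f(A)=A$, there is $y\in A$ with $f(y)=a_1$. Now $y\notin[a_1,c)$, because there $f(y)>y\ge a_1$. Also $y\notin B$, because $f(B)=B\not\ni a_1$. Hence $y\notin A$, which is a contradiction.

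\emph{Case of two components.} Here $a_1<c\le d<a_2$. Arguing as above, either a fixed point lies in $A\setminus B$, or $f(x)>x$ on $[a_1,c)$ and $f(x)<x$ on $(d,a_2]$. In the latter case the preimage $y$ of $a_1$ must lie in $R=[d,a_2]$, and in fact $y\neq d$. Likewise the preimage $z$ of $a_2$ lies in $L=[a_1,c]$ with $z\neq c$. Since $f(d)\in B$, so $f(d)\ge c$, connectedness gives $f([d,y])\supseteq[a_1,c]=L$. Symmetrically $f([z,c])\supseteq R$. I will then choose a closed $K\subset[d,y]$ with $f(K)=L$, and a closed $K'\subset[z,c]$ with $f(K')=K$. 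The map $f^2$ sends $K'$ onto $L\supseteq K'$. Applying the intermediate value theorem to $f^2(x)-x$ on $K'$ gives a point $p\in K'\subset L$ with $f^2(p)=p$, hence $p\in\Per(f)$, and $f(p)\in K\subset R$.

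The main obstacle is ensuring that $p\neq c$, i.e.\ that the periodic point really lies in $A\setminus B$. The bad scenario is $f(c)=d$ and $f(d)=c$. To handle it, I will take $K'$ minimal, so that one endpoint $u$ of $K'$ is mapped by $f^2$ to $a_1$ and the other endpoint $v$ to $c$. If $u<v$, the sign change of $f^2(x)-x$ between $u$ and $v$ can be located strictly before $c$ unless $v=c$. In that remaining situation I will use $f(x)>x$ on $[a_1,c)$ and $f(x)<x$ on $(d,a_2]$ to compare $f^2$ near $c$ with the identity, and so obtain a zero of $f^2(x)-x$ in $[u,c)$. If this sub-case resists, the fallback is to run the same argument with the roles of $L$ and $R$ exchanged: we obtain a $2$-periodic pair, and if it equals $\{c,d\}$ it lies in $B$, so the covering must be refined to produce a different periodic orbit inside $A\setminus B$.
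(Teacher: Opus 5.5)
\textbf{There is a genuine gap.} Your reduction, the one-component case, and the first half of the two-component case are correct. The proof stops exactly at the step you flag as the main obstacle, and neither proposed repair works.

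The signs $f(x)>x$ on $[a_1,c)$ and $f(x)<x$ on $(d,a_2]$ do not determine the sign of $f^2(x)-x$ just left of $c$ when $f(c)=d$, $f(d)=c$. That sign depends on whether the $2$-cycle $\{c,d\}$ attracts or repels from outside $B$. Taking $K'$ minimal does not help either. Concretely, let $A=[-2,2]$, $B=[-1,1]$, and define
\begin{itemize}
\item $f=2$ on $[-2,-\tfrac32]$ and $f(x)=-1-2x$ on $[-\tfrac32,-1]$;
\item $f(x)=-x$ on $B$;
\item $f(x)=1-2x$ on $[1,\tfrac32]$ and $f=-2$ on $[\tfrac32,2]$.
\end{itemize}
Then $f(A)=A$, $f(B)=B$, $f(x)>x$ on $[-2,-1)$ and $f(x)<x$ on $(1,2]$. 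Moreover $f^2\equiv -2$ on $[-2,-\tfrac54]$ and $f^2(x)=3+4x$ on $[-\tfrac54,-1]$. So the unique minimal $K'$ with $f^2(K')=L$ is $[-\tfrac54,-1]$, and on it $f^2(x)-x=3(1+x)<0$ for $x<-1$. Your $p$ is forced to be $c=-1$. The only periodic points in $A\setminus B$ are the $2$-cycle $\{-2,2\}$ at the endpoints of $A$, which your chain $K'\to K\to L$ next to $B$ never detects. ``Refining the covering'' is therefore the missing idea itself, not an argument.

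\textbf{How to close it.} Build the loop on the outer pieces instead. Keep your $y\in(d,a_2]$ with $f(y)=a_1$ and $z\in[a_1,c)$ with $f(z)=a_2$.
\begin{itemize}
\item If $f(a_1)>y$, then $f^2(y)=f(a_1)>y$ while $f^2(a_2)\le a_2$. So $f^2$ has a fixed point in $(y,a_2]\subset A\setminus B$.
\item Symmetrically, if $f(a_2)<z$, then $f^2$ has a fixed point in $[a_1,z)$.
\item Otherwise $f(a_1)\le y$ and $f(a_2)\ge z$. Hence $f([a_1,z])\supseteq[y,a_2]$ and $f([y,a_2])\supseteq[a_1,z]$. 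These two intervals are disjoint from $B$, and the same covering-plus-intermediate-value argument you used for $K,K'$ gives $x\in[a_1,z]$ with $f^2(x)=x$.
\end{itemize}

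\textbf{Comparison with the paper.} With this step your route becomes a complete, elementary proof, and it even yields a point of period at most $2$ in $A\setminus B$. It is genuinely different from the paper's proof. The paper uses Blokh's decomposition to show that all $\omega$-limit sets of points of $A$ lie in $A\cap B$, collapses $B$ to a point, and applies Lemma~\ref{lem:unique_fixed} to the quotient map.
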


\begin{proof}
    Suppose on the contrary, that $\Per(f)\cap (A\setminus B){=}\emptyset$. It is easy to see that also $\omega(f\vert_A)\cap (A\setminus B){=}\emptyset$.
    This is because, {by results of Blokh \cite{B_spectral},} basic sets are contained in $\overline{\Per(f)}$ and, for each maximal solenoid $Q_{\max}\subset A$, $Q_{\min}=Q_{\max}\cap\overline{\Per(f)}$ has to be subset of a fixed interval $A\cap B$ and, therefore, $Q_{max}\subset A\cap B$.

    Since $f(B)=B$, we can define a factor system $(I/_{B},f/_{B}),$ where $I/_{B}$ is the factor space and $f/_{B}\colon I/_{B}\to I/_{B} $ is induced map given by $f/_{B}\circ\pi_{B}=\pi_{B}\circ f,$ where $\pi_{B}\colon I\to I/_{B}$ is the canonical projection.
    Note that $\pi_B(A)$ is a subarc of $I/_B$ containing the unique fixed point $\pi_B(B)$ such that $f/_{B}$-trajectories of all the points of $\pi_B(A)$ converge towards $\pi_B(B)$.
    By Lemma~\ref{lem:unique_fixed}, $\pi_B(A)=\pi_B(B)$ and hence $A\subset B$. A contradiction.
\end{proof}

\begin{lemma}\label{lem:intersects_alpha}
	Let $A\in \mathcal{C}(I)$ and let $B\in s\alpha(A).$ Then $B\cap s\alpha(x)\neq\emptyset$ for all $x\in A$. 
\end{lemma}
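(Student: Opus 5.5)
Since $B\in s\alpha(A)$, there is a backward branch $\{A_n\}_{n\leq 0}$ of $A$ in $\mathcal{C}(I)$ with $A_0=A$ and $f(A_{n})=A_{n+1}$, together with a strictly decreasing sequence $n_i\to-\infty$ such that $A_{n_i}\to B$ in the Hausdorff metric. Fix $x\in A$. The plan is to lift this branch of sets to a branch of points through $x$, pass to a limit along the $n_i$, and observe that the limit lies in $B$ and in $s\alpha(x)$.

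\textbf{Step 1: building a point branch inside the set branch.} Since $f(A_{n-1})=A_n$ for every $n\leq 0$, each point of $A_n$ has a preimage in $A_{n-1}$. Starting from $x_0=x\in A_0$, choose inductively $x_{n-1}\in A_{n-1}$ with $f(x_{n-1})=x_n$. This yields a backward branch $\{x_n\}_{n\leq 0}$ of $x$ in $(I,f)$ with $x_n\in A_n$ for all $n\leq 0$. Only surjectivity of $f\colon A_{n-1}\to A_n$ is used, so no compactness argument is needed here.

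\textbf{Step 2: passing to the limit.} By compactness of $I$, pass to a subsequence of $(n_i)$ along which $x_{n_i}\to y$ for some $y\in I$. By definition of the special $\alpha$-limit set, $y\in s\alpha(x)$. Since $x_{n_i}\in A_{n_i}$ and $\dist_H(A_{n_i},B)\to 0$, we get $\dist(x_{n_i},B)\to 0$. As $B$ is closed, it follows that $y\in B$. Therefore $y\in B\cap s\alpha(x)$, which is the desired nonemptiness.

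\textbf{Main obstacle.} The only subtle point is the membership $y\in B$. It uses the standard fact that a limit of points $z_i\in C_i$ with $C_i\to C$ in $2^X$ lies in $C$. This follows from the definition of $\dist_H$: the inclusion $C_i\subset N(C,\eps_i)$ with $\eps_i\to0$ gives $\dist(z_i,C)\to0$, and closedness of $C$ finishes the argument.
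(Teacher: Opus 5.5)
Your proof is correct and follows the paper's argument: you lift $x$ to a backward branch with $x_n\in A_n$ using $f(A_{n-1})=A_n$, then extract a convergent subsequence along the indices for which $A_{n_i}\to B$. The limit then lies in both $B$ and $s\alpha(x)$. You also spell out the Hausdorff-metric step ($\dist(x_{n_i},B)\to 0$ together with closedness of $B$), which the paper leaves implicit.
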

\begin{proof}
    Let $\{A_n\}_{n\leq 0}$ be a backward branch of $A$ such that $B\in\alpha(\{A_n\}_{n\leq 0})$.
    Then there is a strictly decreasing sequence of negative integers $\{n_k\}_{k\geq 0}$ such that $\lim_k A_{n_k}=B$.

    For every $x_0\in A$ we can find a backward branch $\{x_n\}_{n\leq 0}$ of $x_0$ such that $x_n\in A_n$ for all $n\leq 0$.
    Let $\{n_{k_j}\}_{j\geq 0}$ be a subsequence of $\{n_k\}_{k\geq 0}$ such that $\{x_{n_{k_j}}\}_{j\geq 0}$ converges.
    Then $\lim_j x_{n_{k_j}}\in s\alpha(x)\cap B$.
\end{proof}

\section{Periodic nondegenerate intervals}\label{sec:4}

The goal of this section is to prove Lemma~\ref{lem:asympt_per}, i.e. that every nondegenerate and asymptotically periodic element of an $\alpha$-limit set of a backward branch in the induced system is in fact periodic.

\begin{lemma}\label{lem:intersects_orbit}
    Let $\{A_n\}_{n\leq 0}$ be a backward branch of $A=A_0\in\mathcal{C}(I)$ and suppose that there is some nondegenerate $B\in \alpha(\{A_n\}_{n\leq 0})$.
    Then $A_n\cap \mathcal{O}(B)\neq \emptyset$ for all $n\leq 0$.
    {Furthermore, every $C\in \alpha(\{A_n\}_{n\leq 0})$ intersects some element of $\omega_{\tilde{f}}(B)$.}
\end{lemma}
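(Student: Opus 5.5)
The plan is to use the nondegeneracy of $B$ to get an actual point of $B$ inside the approximating intervals, and then push this forward along the branch with $f^j$. Fix a strictly decreasing sequence of negative integers $\{n_k\}_{k\geq 0}$ with $A_{n_k}\to B$ in $\mathcal{C}(I)$. Write $B=[b_1,b_2]$ with $b_1<b_2$, and pick an interior point $c\in(b_1,b_2)$.

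\textbf{Step 1.} I will show that $c\in A_{n_k}$ for all large $k$. The sets $A_{n_k}$ are subintervals, say $[a^k_1,a^k_2]$. Hausdorff convergence of intervals means $a^k_1\to b_1$ and $a^k_2\to b_2$. Hence $a^k_1<c<a^k_2$ eventually, and in particular $A_{n_k}\cap B\neq\emptyset$ for all $k\geq k_0$. This is the only place nondegeneracy is used, and it is essential: for degenerate $B$ the approximating sets need not meet $B$ at all.

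\textbf{Step 2 (first claim).} Given $n\leq 0$, choose $k\geq k_0$ with $n_k<n$ and set $j=n-n_k\geq 1$. Since $\tilde f^{\,j}(A_{n_k})=A_n$, the point $f^j(c)$ lies in $A_n\cap f^j(B)$. Therefore $A_n\cap\mathcal{O}(B)\neq\emptyset$.

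\textbf{Step 3 (second claim).} Let $C\in\alpha(\{A_n\}_{n\leq 0})$, so $A_{m_i}\to C$ for some strictly decreasing sequence $\{m_i\}$ of negative integers. For each $i$, Step 2 lets me choose $k(i)\geq k_0$ with $n_{k(i)}<m_i$ and $j_i:=m_i-n_{k(i)}\geq i$, which is possible because $n_k\to-\infty$. This gives points $p_i\in A_{m_i}\cap f^{j_i}(B)$ with $j_i\to\infty$.

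By compactness of $\mathcal{C}(I)$ and of $I$, I pass to a subsequence along which $f^{j_i}(B)\to D$ and $p_i\to p$. Since $j_i\to\infty$, we have $D\in\omega_{\tilde f}(B)$. Hausdorff limits retain limits of points, so $p\in C$ and $p\in D$, hence $C\cap D\neq\emptyset$.

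The main point requiring care is ensuring $j_i\to\infty$ in Step 3, so that $D$ really lies in the $\omega$-limit set rather than merely in the orbit closure. This is handled by choosing $n_{k(i)}$ far enough below $m_i$.
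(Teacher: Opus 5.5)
Your proof is correct and follows essentially the same route as the paper: an interior point of $B$ is eventually contained in the approximants $A_{n_k}$, and it is then pushed forward along the branch into $A_n\cap f^{j}(B)$. The only difference is in the second claim: the paper argues by contradiction, using an $\eps$-neighbourhood of the compact set $\bigcup\omega_{\tilde f}(B)$ and the fact that the orbit of $y\in\Int B$ eventually enters it, whereas you extract convergent subsequences of $f^{j_i}(B)$ and of the points $p_i$ directly, which is slightly cleaner and makes explicit why the limit lies in $\omega_{\tilde f}(B)$.
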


\begin{proof}
    Let $\eps=(\diam B)/2>0$. Then $\dist_H(A_n,B)<\eps$ implies $A_n\cap B\neq\emptyset$. But $B\in \alpha(\{A_n\}_{n\leq 0})$ so $A_n\cap B\neq\emptyset$ for infinitely many $n\leq 0$.
    Therefore, $A_n\cap\mathcal{O}(B)\neq\emptyset$ for all $n\leq 0$ and first part of the lemma  is proved.

    To prove the second statement, take some $C\in \alpha(\{A_n\}_{n\leq 0})$ and suppose that $C\cap(\bigcup\omega_{\tilde{f}}(B))=\emptyset$.
    But $\omega_{\tilde{f}}(B)$ is a compact set in $2^X$ and therefore $\bigcup\omega_{\tilde{f}}(B)$ is a compact subset of $I$.
    Hence, we can find some $\eps>0$ such that \[\N\left(\bigcup\omega_{\tilde{f}}(B),\eps\right)\cap C=\emptyset.\]
    Now take some $y\in\Int B$. $\omega_f(y)\subset \bigcup\omega_{\tilde{f}}(B)$ so there is some $m\geq 0$ such that $f^n(y)\in \N(\bigcup\omega_{\tilde{f}}(B),\eps)$ for all $n\geq m$.
    There are also some $j\leq 0$ such that $A_j\cap \N(\bigcup\omega_{\tilde{f}}(B),\eps)=\emptyset$ (since $C\in \alpha(\{A_n\}_{n\leq 0})$) and some $i<j-m$ such that $y\in A_i$ (since $B\in \alpha(\{A_n\}_{n\leq 0})$). But $j-i>m$ and hence $f^{j-i}(y)\in A_j\cap \N(\bigcup\omega_{\tilde{f}}(B),\eps)$. A contradiction.   
\end{proof}

\begin{lemma}\label{lem:intersects_omega}
	Let $A\in \mathcal{C}(I)$ and let $B\in s\alpha(A)$ be nondegenerate. If $\omega_{\tilde{f}}(B)$ is a periodic orbit of period $p$ then $B\cap\lim_n f^{np}(B)\neq\emptyset$.
\end{lemma}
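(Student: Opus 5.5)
The plan is to pass to the iterate $g=f^p$ and reduce the statement to the second part of Lemma~\ref{lem:intersects_orbit}, applied with $C=B$.

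\textbf{Step 1: the limit $D$ exists.} Write $\omega_{\tilde f}(B)=\{D_0,\dots,D_{p-1}\}$, a periodic orbit of $\tilde f$ of period $p$. Since the orbit is finite, $\dist_H(\tilde f^n(B),\omega_{\tilde f}(B))\to 0$. Since $\tilde f$ permutes the $D_i$ cyclically, by continuity the sequence $\tilde f^{np}(B)$ eventually stays close to a single $D_i$. Hence $D:=\lim_n f^{np}(B)$ exists, and $\tilde g(D)=D$.

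\textbf{Step 2: $\omega_{\tilde g}(B)=\{D\}$.} This is immediate from Step~1, because $\tilde g^n(B)=\tilde f^{np}(B)\to D$.

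\textbf{Step 3: realize $B$ as an $\alpha$-limit point for $g$.} Let $\{A_n\}_{n\le 0}$ be a backward branch of $A$ under $\tilde f$ with $B\in\alpha(\{A_n\}_{n\le 0})$. Choose $n_k\to-\infty$ with $A_{n_k}\to B$. Passing to a subsequence, we may assume all $n_k$ are congruent to a fixed residue $-r$ modulo $p$, with $0\le r<p$. Put $A'_m=A_{-r+mp}$ for $m\le 0$. Then $\{A'_m\}_{m\le0}$ is a backward branch of $A'_0=A_{-r}$ for $\tilde g$, since $\tilde g(A'_{m})=\tilde f^p(A_{-r+mp})=A'_{m+1}$. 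Moreover $B\in\alpha_{\tilde g}(\{A'_m\}_{m\le 0})$, via the indices $m_k=(n_k+r)/p$. The fact that this is a branch of $A_{-r}$ rather than of $A$ is harmless: Lemma~\ref{lem:intersects_orbit} only uses that we have a backward branch and a nondegenerate element of its $\alpha$-limit set.

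\textbf{Step 4: conclude.} Apply the second part of Lemma~\ref{lem:intersects_orbit} to the map $g$, the branch $\{A'_m\}_{m\le0}$ and the nondegenerate element $B$, taking $C=B$. It says that $B$ intersects some element of $\omega_{\tilde g}(B)=\{D\}$. Hence $B\cap D\ne\emptyset$, as required.

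The only delicate point is Step~3: the residue class must be chosen so that $B$ is still an accumulation point. Otherwise, applying Lemma~\ref{lem:intersects_orbit} directly to $f$ only yields that $B$ meets some $f^j(D)$, not necessarily $D$ itself.
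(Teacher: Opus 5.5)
Your proof is correct and is essentially the paper's argument: pass to $g=f^p$ so that $\omega_{\tilde g}(B)=\{D\}$, then apply the second part of Lemma~\ref{lem:intersects_orbit} with $C=B$. Your Step~3, which picks a residue class so that $B$ is still an $\alpha$-limit point of a backward branch under $\tilde f^p$, spells out what the paper compresses into the remark $s\alpha(\tilde f)=s\alpha(\tilde f^p)$.
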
	

\begin{proof}
	Since $s\alpha(\tilde{f})=s\alpha(\tilde{f}^p)$, we can consider $f^p$ instead of $f$, i.e. for the sake of simplicity we can assume that $p=1$.
Denote $F=\lim_n f^{n}(B)$ and let us prove that $B\cap F\neq\emptyset$.
	Let $\{A_n\}_{n\leq 0}$ be some backward branch of $A$ such that $B\in\alpha\left(\{A_n\}_{n\leq 0}\right)$. 
 Since $\omega_{\tilde{f}}(B)=\{F\}$, by Lemma~\ref{lem:intersects_orbit}, $B\cap F\neq\emptyset$ and the lemma is proved.
\end{proof}

\begin{lemma}\label{lem:A_not_sub_BUF}
	Let $A\in \mathcal{C}(I)$ and let $B\in\alpha\left(\{A_i\}_{i\leq 0}\right)$ for some backward branch $\{A_i\}_{i\leq 0}$ of $A$.
	Suppose there is some $F\in \mathcal{C}(I)$ such that $\omega_{\tilde{f}}(B)=\{F\}$ and additionally $B\setminus F\neq\emptyset$.
	Then $A_i\subset B\cup F$ for at most finitely many $i\leq 0$.
\end{lemma}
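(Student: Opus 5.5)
We argue by contradiction and use the forward images of $B$ together with the backward branch. Suppose $A_i\subset B\cup F$ for infinitely many $i\le 0$. First observe that $B\cup F$ is connected. Indeed, $B\in\alpha(\{A_i\}_{i\leq 0})$ and $\omega_{\tilde f}(B)=\{F\}$, so Lemma~\ref{lem:intersects_orbit} gives $C\cap F\neq\emptyset$ for every $C\in\alpha(\{A_i\}_{i\leq 0})$. In particular $B\cap F\neq\emptyset$, and $B\cup F$ is an interval. Since $F$ is a fixed point of $\tilde f$, we have $f(B\cup F)=f(B)\cup F$. By induction, $f^n(B\cup F)=f^n(B)\cup F$, and this tends to $F$ in the Hausdorff metric because $f^n(B)\to F$.

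Now use the recurrence in the backward branch. Pick a point $y\in B\setminus F$ and let $\delta=\dist(y,F)>0$. Fix $N$ such that $\dist_H(f^n(B)\cup F,F)<\delta/2$ for all $n\ge N$; this is possible since the union of $f^n(B)$ and $F$ tends to $F$. Hence $f^n(B\cup F)\subset N(F,\delta/2)$ for all $n\geq N$. Next choose $j\le 0$ with $\dist_H(A_j,B)<\delta/2$, which exists because $B\in\alpha(\{A_i\}_{i\leq 0})$. Then $A_j$ contains a point at distance more than $\delta/2$ from $F$, so $A_j\not\subset N(F,\delta/2)$. By the standing assumption, there is some $i<j-N$ with $A_i\subset B\cup F$. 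Then
\[A_j=f^{j-i}(A_i)\subset f^{j-i}(B\cup F)\subset N(F,\delta/2),\]
since $j-i>N$. This contradicts the choice of $j$, so $A_i\subset B\cup F$ holds for only finitely many $i\le 0$.

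The only delicate point is making sure that $f^n(B\cup F)$ converges uniformly to $F$. This is exactly where invariance $f(F)=F$ is needed, together with the fact that convergence of the union follows from convergence of $f^n(B)$ to $F$. The hypothesis $B\setminus F\neq\emptyset$ is what provides the positive gap $\delta$. Connectedness of $B\cup F$ is not actually essential, since $\bar f$ acts on $2^I$ anyway; I included it only for clarity.
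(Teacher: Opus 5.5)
Your argument is correct and is essentially the paper's proof. You assume infinitely many $A_i\subset B\cup F$, use $f^n(B\cup F)=f^n(B)\cup F\to F$ to find $N$, and then take $j$ with $A_j$ close to $B$ and an earlier index $i<j-N$ with $A_i\subset B\cup F$, which forces $A_j\subset N(F,\delta/2)$. Your $\delta/2$ bookkeeping is a bit more careful than the paper's. You should drop the connectedness aside: Lemma~\ref{lem:intersects_orbit} requires $B$ to be nondegenerate, and for degenerate $B$ the set $B\cup F$ is two points. As you note yourself, the argument never uses it.
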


\begin{proof}
Suppose on the contrary, i.e. that $A_i\subset B\cup F$ for infinitely many $i\leq 0$.
	There is some $\eps>0$ such that $B\setminus \N(F,\eps)\neq\emptyset$.
	Since $\lim f^n(B\cup F)=F$, there is some $n_0\geq 0$ such that $f^n(F\cup B)\subset \N(F,\eps)$ for all $n\geq n_0$.
	Furthermore, since $B\in\alpha\left(\{A_n\}_{n\leq 0}\right)$ and by the assumption, there are $i_1,i_2<0$, $i_2<i_1-n_0$ such that $A_{i_1}\setminus  \N(F,\eps) \neq\emptyset$ and $A_{i_2}\subset B\cup F$.
	But ${i_1}-i_2>n_0$ and therefore $f^{{i_1}-i_2}(B\cup F)\subset \N(F,\eps)$ so \[A_{i_1}=f^{{i_1}-i_2}(A_{i_2})\subset \N(F,\eps),\] a contradiction.
\end{proof}

\begin{lemma}\label{lem:B_not_sub_Ai}
	Let $A\in \mathcal{C}(I)$ an let $B\in\alpha\left(\{A_i\}_{i\leq 0}\right)$ for some backward branch $\{A_i\}_{i\leq 0}$ of $A$.
	Suppose there is some $F\in \mathcal{C}(I)$ such that $\omega_{\tilde{f}}(B)=\{F\}$ and additionally $F\setminus B\neq\emptyset$.
	Then $B\subset A_i$ for at most finitely many $i\leq 0$.
\end{lemma}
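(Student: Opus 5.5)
We argue by contradiction, in the same style as Lemma~\ref{lem:A_not_sub_BUF}. Suppose that $B\subset A_i$ for infinitely many $i\leq 0$.

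Since $F\setminus B\neq\emptyset$ and $B$ is closed, there is a point $z\in F$ and some $\eps>0$ with $\dist(z,B)>2\eps$.

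Since $\omega_{\tilde f}(B)=\{F\}$, we have $f^n(B)\to F$ in the Hausdorff metric. So there is $n_0\geq 0$ such that $\dist_H(f^n(B),F)<\eps$ for all $n\geq n_0$. In particular, $f^n(B)$ meets the ball $N(\{z\},\eps)$ for all $n\geq n_0$.

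Now take indices $i_1$ and $i_2$ as follows:
\begin{itemize}
\item[(i)] $i_1\leq 0$ with $\dist_H(A_{i_1},B)<\eps$, which exists because $B\in\alpha(\{A_i\}_{i\leq 0})$;
\item[(ii)] $i_2<i_1-n_0$ with $B\subset A_{i_2}$, which exists by the contrapositive assumption.
\end{itemize}

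Then
\[
f^{i_1-i_2}(B)\subset f^{i_1-i_2}(A_{i_2})=A_{i_1}\subset N(B,\eps).
\]
On the other hand, $i_1-i_2>n_0$, so $f^{i_1-i_2}(B)$ contains a point within $\eps$ of $z$. That point would then be within $2\eps$ of $B$. Hence $\dist(z,B)<2\eps$, which contradicts the choice of $z$ and $\eps$.

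The only delicate step is choosing the separating point $z\in F\setminus B$ with a margin $2\eps$, so that Hausdorff closeness of $A_{i_1}$ to $B$ cannot absorb $z$. Everything else mirrors Lemma~\ref{lem:A_not_sub_BUF} with the inclusions reversed.
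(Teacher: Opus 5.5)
Your proof is correct and follows the paper's argument. Assuming $B\subset A_i$ infinitely often, you pick $i_2$ far enough before an index $i_1$ with $A_{i_1}$ Hausdorff-close to $B$, and play $f^{i_1-i_2}(B)\subset A_{i_1}\subset N(B,\eps)$ against $f^{i_1-i_2}(B)$ being close to $F$, which reaches outside $N(B,\eps)$. Your $2\eps$ margin around $z$ is in fact slightly more careful than the paper's claim that $f^n(B)$ eventually meets $E=F\setminus N(B,\eps)$; Hausdorff convergence alone does not guarantee that without such a margin.
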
				
		
\begin{proof}
	Suppose on the contrary, i.e. that $B\subset A_i$ for infinitely many $i\leq 0$.
	Take some $\eps>0$ such that $E=F\setminus \N(B,\eps)\neq\emptyset$.
	Since $\lim f^n(B)=F$, there is some $n_0\geq 0$ such that $f^n(B)\cap E\neq\emptyset$ for all $n\geq n_0$.
	There are also some $i_1,i_2$, $i_1<i_2-n_0<0$ such that $A_{i_2}\subset \N(B,\eps)$ and $B\subset A_{i_1}$.
	But then $\N(B,\eps)\supset A_{i_2}=f^{i_2-i_1}(A_{i_1})\supset f^{i_2-i_1}(B)$.
	On the other hand $i_2-i_1>n_0$ so $f^{i_2-i_1}(B)\cap E\neq \emptyset$.
	This implies that \[\N(B,\eps)\cap \left(F\setminus \N(B,\eps)\right)\neq\emptyset,\] a contradiction.
	Indeed, $B\subset A_i$ for at most finitely many $i\leq 0$.
\end{proof}

\begin{lemma}\label{lem:asympt_per}
Let $B\in s\alpha(\tilde{f})$ be nondegenerate and asymptotically periodic.
Then $B$ is periodic.
\end{lemma}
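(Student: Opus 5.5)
The plan is to reduce to the case where $B$ converges to a fixed interval and show $B$ must coincide with it. Let $\omega_{\tilde f}(B)=\{F,\tilde f(F),\dots,\tilde f^{p-1}(F)\}$. Fix a backward branch $\{A_i\}_{i\le 0}$ of some $A$ with $B\in\alpha(\{A_i\}_{i\le0})$.

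\textbf{Reduction to $p=1$.} Replace $f$ by $f^p$. Some residue class $\{A_{r+ip}\}_{i\le 0}$ is a backward branch for $\tilde f^p$ that still accumulates at $B$. So we may assume $\omega_{\tilde f}(B)=\{F\}$ with $f(F)=F$, and we must prove $B=F$.

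\textbf{Immediate consequences.}
\begin{enumerate}[(i)]
\item By Lemma~\ref{lem:intersects_omega}, $B\cap F\neq\emptyset$, so $B\cup F$ is an interval.
\item The $\alpha$-limit set is closed and strongly invariant, so $F\in\alpha(\{A_i\}_{i\le0})$.
\end{enumerate}
Assume $B\neq F$. Then $B\setminus F\neq\emptyset$ or $F\setminus B\neq\emptyset$. Lemma~\ref{lem:A_not_sub_BUF} and Lemma~\ref{lem:B_not_sub_Ai} then say that, for all but finitely many $i$, $A_i\not\subset B\cup F$, respectively $B\not\subset A_i$. I will treat the case $B\setminus F\neq\emptyset$; the other case is symmetric, with the roles of "sticking out" and "falling short" exchanged.

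\textbf{Main case, $B\setminus F\neq\emptyset$.} Write $B=[b_1,b_2]$, $F=[c_1,c_2]$, and assume $b_2>c_2$. Choose $\eps>0$ small and $J=[c_2+\eps,b_2-\eps]\subset B\setminus F$. Take indices $k<j$ with $j-k=m$ large, $A_k$ and $A_j$ both $\eps$-close to $B$, and $f^m(B)\subset N(F,\eps)$; this is possible because $B\in\alpha$ and $f^n(B)\to F$. Then $f^m(A_k)=A_j\supset J$, while $f^m(A_k\cap B)\subset N(F,\eps)$. Hence the small excess $A_k\setminus B$, which lies within $\eps$ of an endpoint of $B$ (say $b_2$), must $f^m$-cover $J$. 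Since $A_j$ also reaches $\eps$-close to $b_2$ and beyond, this excess in fact $f^m$-covers a one-sided neighborhood of itself together with $J$. This yields an interval $U\ni b_2$, arbitrarily small, with $f^m(U)\supset U\cup J$.

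\textbf{Contradiction.} From this covering I would extract an $f^m$-periodic point in $U$ and a forward orbit passing through $J$. Letting $\eps\to0$ gives a limiting picture: the fixed intervals $F$ and $F'=\lim$ of the invariant hulls of the covering chains satisfy $F'\setminus F\neq\emptyset$. Lemma~\ref{lem:fixed_sub_fixed} then gives a periodic point in $F'\setminus F$ arbitrarily close to $b_2$, or inside $J\subset B$. A periodic point $q\in B\setminus F$ contradicts $f^n(B)\to F$, since $q\in f^{n}(B)$ for infinitely many $n$ but $q\notin F$. Also, every $x\in B$ has $\omega_f(x)\subset F$, which is consistent with Lemma~\ref{lem:unique_fixed}-type arguments. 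So $B\subset F$, and the symmetric argument gives $F\subset B$.

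\textbf{Main obstacle.} The hardest step is the last one: turning the approximate covering relations coming from $A_k\to A_j$ into a genuine periodic point inside $B\setminus F$ rather than just outside $B$. I would first try to arrange, by choosing the endpoint carefully (taking the endpoint of $B$ lying outside $F$), that the covered interval $J$ itself $f^{m'}$-covers itself via the chain $J\to$ excess $\to J$. This would give a periodic point in $J$ directly.
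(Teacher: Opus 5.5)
Your proposal has two genuine gaps, and the second one is the heart of the lemma.

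First, the case $F\setminus B\neq\emptyset$ is not symmetric to $B\setminus F\neq\emptyset$. $F$ is $\tilde f$-fixed while $B$ is attracted to it, so the two cannot be swapped. Your forcing mechanism uses that $f^m(A_k\cap B)\subset N(F,\eps)$ cannot reach $J\subset B\setminus F$. When $B\subsetneq F$ there is no such tension: the elements of the branch near $F$ are exactly where $f^m(B)$ lands. So exchanging ``sticking out'' and ``falling short'' does not produce an argument. The subcase $B\subset\Int F$ is immediate from Lemma~\ref{lem:B_not_sub_Ai}, because elements of the branch close to $F$ contain $B$. Likewise $F\subset\Int B$ follows from Lemma~\ref{lem:A_not_sub_BUF} with no covering argument, since elements close to $F$ lie in $B=B\cup F$. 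But the configuration where $B\subsetneq F$ shares an endpoint with $F$ ($c<a<b=d$ for $B=[a,b]$, $F=[c,d]$) needs its own proof, which the paper supplies:
\begin{enumerate}
\item it shows $d\notin\bigcup_n f^n(\Int F)$, since otherwise $B\subset A_{n_k+m}$ infinitely often;
\item it reduces to $f(d)=d$;
\item it proves $A_i\subset F$ for all $i$, and then $f(c)=c$;
\item it excludes fixed points in $(c,a)$ and rules out both signs of $f(w)-w$ there.
\end{enumerate}
Nothing in your plan covers this case.

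Second, in your main case the self-covering $f^m(U)\supset U\cup J$ does not follow. Hausdorff $\eps$-closeness of $A_j$ to $B$ only gives $[b_1+\eps,b_2-\eps]\subset A_j$, so $A_j$ need not contain the excess of $A_k$ at $b_2$. Moreover, the excess that covers $J$ may be the one at $b_1$, which gives a cross-covering, not a self-covering. Your ``Contradiction'' paragraph is then not an argument. The ``invariant hulls'' and their limit $F'$ are undefined, there is no reason $F'$ should be $\tilde f$-fixed, and periodic points near $b_2$ but outside $B$ contradict nothing.

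The missing idea comes from the paper's Case~3 ($a<c\le b\le d$, which is your main case up to reflection once $F\subset\Int B$ is discarded). There one works with the elements $A_{n_k}=[e_{n_k},f_{n_k}]$ approximating $F$, not $B$, and never looks for a periodic point. The argument runs in three steps:
\begin{enumerate}
\item No point of $B$ is ever mapped beyond $d$. Otherwise $B'=B\cup F\cup f^m(B)$ is an interval with $F\subset\Int B'$ and $f^n(B')\to F$, so some $A_n\subset B'$ near $F$ has all late images near $F$, contradicting $B\in\alpha$.
\item Lemma~\ref{lem:A_not_sub_BUF} forces $f_{n_k}>d$ with $f_{n_k}\searrow d$. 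Combined with step~1, this gives $f([d,f_{n_j}])\supset[d,f_{n_j}]$ for large $j$.
\item The contradiction comes from monotone growth, $f^{n+1}((d,f_{n_{i_0}}])\supset f^{n}((d,f_{n_{i_0}}])$. A point of this excess that is pushed outside $N(F,\kappa)$ when the branch is near $B$ stays in every later $A_i$, so the branch can never return close to $F$.
\end{enumerate}
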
	
	
\begin{proof}
 Take some nondegenerate and asymptotically periodic $B\in s\alpha(\tilde{f})$ and suppose it is not periodic.
	We can assume that $\omega_{\tilde{f}}(B)$ consists of a single continuum $F$,
	in particular $B\neq F$.
	Denote $B=[a,b]$ and $F=[c,d]$ with $a<b$ and $c\leq d$.
By Lemma~\ref{lem:intersects_omega}, $B\cap F\neq\emptyset$.
	Take some $A\in \mathcal{C}(I)$ with a backward branch $\{A_n\}_{n\leq 0}$ such that $B\in\alpha\left(\{A_n\}_{n\leq 0}\right)$.
	Then also $F\in \alpha\left(\{A_n\}_{n\leq 0}\right)$.
Without loss of generality, we may assume that one of the following cases is true.

\medskip\paragraph{\textbf{Case 1:} $F\subset \Int B$, i.e. $a<c\leq d<b$.} 
Since $F\in \alpha\left(\{A_n\}_{n\leq 0}\right)$, there is a strictly decreasing sequence $\{n_k\}_{k\geq 0}$ of negative integers such that $A_{n_k}\subset B=B\cup F$ for all $k\geq 0$. This contradicts Lemma~\ref{lem:A_not_sub_BUF}.

\medskip\paragraph{\textbf{Case 2:} $B\subset \Int F$, i.e. $c<a<b<d$.}
Since $F\in \alpha\left(\{A_n\}_{n\leq 0}\right)$, there is a strictly decreasing sequence $\{n_k\}_{k\geq 0}$ of negative integers such that $B\subset A_{n_k}$ for all $k\geq 0$. This contradicts Lemma~\ref{lem:B_not_sub_Ai}.

\medskip\paragraph{\textbf{Case 3:} $a<c<b=d$ or $a<b=c=d$ or $a<c<b<d$ or $a<b=c<d$.}  
There is a sequence $n_k<0$ such that $A_{n_k}\to F$ as $k\to\infty$. Denote $A_{n_k}=[e_{n_k},f_{n_k}]$.
We can assume that $a<e_{n_k}$ for every $k\geq 0$.
	Then, by Lemma~\ref{lem:A_not_sub_BUF}, we can also assume that 
    $d<f_{n_k}$ for every $k\geq 0$ and that $f_{n_k}$ form a strictly monotone sequence converging towards $d$. 
	We want to prove that $f([d,f_{n_k}])\supset [d,f_{n_k}]$ for almost all $k\geq 0$.
	Let $\eps=c-a>0$.
	
	First we claim that for all $x\in B$ and for all $n\geq 0$, $f^n(x)\leq d$.
	To see this, suppose on the contrary, that $f^m(x)>d$ for some $m\geq 0$ and some $x\in B$.
	Then $B^\prime= B\cup F\cup f^{m}(B)\supset F$. Note that $B^\prime$ is connected by the assumptions of Case 3. Also $\lim_{n\to \infty} f^n(B^\prime)=F$, so there is some $k_0$ such that $f^k(B^\prime)\subset \N(F,\frac{\eps}{2})$ for all $k\geq k_0$.
	Since $F,B\in\alpha\left(\{A_n\}_{n\leq 0}\right)$, {and $F\subset \Int B^\prime$} there are $j_1,j_2$, {$n_{j_1}<n_{j_2}-k_0<0$} such that {$A_{n_{j_1}}\subset 
 B^\prime$}
 and $A_{n_{j_2}}\subset \N(B,\eps/2)$.
	But then $f^{n_{j_2}-n_{j_1}}(A_{n_{j_1}})=A_{n_{j_2}}\subset \N(F,\eps/2)$, contradicting the choice of $A_{n_{j_2}}$.
	Therefore, indeed the claim holds, that is, $f^n(x)\leq d$ for all $x\in B$ and for all $n\geq 0$.

	Since $f(F)=F$, there is some $\delta>0$ such that $f(\N(F,\delta))\subset \N(F,\eps)$.
	Take some $j_0\geq 1$ such that for all $j\geq j_0$ we have $f_{n_j}\in \N(F,\delta)$.
    Fix any $j\geq j_0$ and put $r=n_{j-1}-n_j>0$.
	
		There is some $y\in A_{n_j}$ such that $f^r(y)=f_{n_{j-1}}$.
		Moreover, there is also $s^\prime\geq 0$ such that $f^k(y)\in( d,f_{n_j}]$ for all $0\leq k\leq s^\prime$ and $f^{s^\prime+1}(y)\notin( d,f_{n_j}]$ {because the sequence $f_{n_k}$ is strictly monotone} {and because $f^n(x)\leq d$ for all $x\in [e_{n_j},d]$ and every $n\geq 0$}.  
		Obviously $f^{s^\prime+1}(y)\notin F$ and if $f^{s^\prime+1}(y)\in B$ then, {since $r>s^\prime$, we have} $f^r(y)\leq d<f_{n_{j-1}}$ {which is a contradiction}.
		But $f(( d,f_{n_j}])\subset \N(F,\eps)$ and the above arguments lead to $f^{s^\prime+1}(y)>f_{n_j}$.
		Now, combining $f([d,f_{n_j}])\cap F\neq \emptyset$ with $f([d,f_{n_j}])\ni f^{s^\prime+1}(y)>f_{n_j}$ we indeed obtain that $f([d,f_{n_j}])\supset [d,f_{n_j}]$ for some $j_0> 0$ and all $j\geq j_0$.

	Denote $\kappa=\min\{\eps/2,\dist(d,f_{n_{j_0}})\}$.
	Since $f^n(B\cup F)\to F$ as $n\to\infty$, there is some $n_0\geq 0$ such that $f^n(B\cup F)\subset \N(F,\kappa)$ for all $n\geq n_0$.
	There are some $i_1,i_2,i_3$ such that ${0>n_{i_3}>n_{i_2}>n_{i_1}+n_0}$ and such that $\dist_H(F,A_{n_{i_3}})<\kappa,\ \dist_H(B,A_{n_{i_2}})<\kappa,\ \dist_H(F,A_{n_{i_1}})<\kappa$.
	In particular, $A_{n_{i_3}}\cup A_{n_{i_1}}\subset \N(F,\kappa)$ and $A_{n_{i_2}}\setminus \N(F,\kappa)\neq\emptyset$.
	Let $j_1=\min\{j\geq 0\colon f_{n_j}\in A_{n_{i_1}}\}$. {Note that $j_1>j_0$ since the sequence $\{f_{n_k}\}_{k\geq 0}$ is monotone and by the choice of $\kappa$ and $i_1$.}
	Take some $n_{i_0}<n_{i_1}$ such that $A_{n_{i_0}}\subset \N(F,\dist(d,f_{n_{j_1}}))$.
	So there is some $z\in A_{n_{i_0}}$ such that 
    {$f^{n_{i_2}-n_{i_0}}(z)\in A_{n_{i_2}}\setminus \N(F,\kappa)$}.
 But $n_{i_2}-n_{i_0}>n_0$ so {$z\not\in B\cup F$ and as a consequence }
    $z\in( d,f_{n_{
    i_0}}]$.
	Since $f^{n+1}(( d,f_{n_{
     i_0
    }}])\supset f^{n}(( d,f_{n_{
     i_0}}])$ for every $n\geq 0$ and since $( d,f_{n_{
    i_0
    }}]\subset A_{n_{i_0}}$, 
    we have that $f^{n_{i_2}-n_{i_0}}(z)\in A_{n_{i_2}}\setminus \N(F,\kappa)$ implies $f^{{
    n_{i_2}}-n_{i_0}}(z)\in A_{n_{i_3}}\setminus \N(F,\kappa)$,     
     which is a contradiction with $A_{n_{i_3}}\subset \N(F,\kappa)$.
	
\medskip\paragraph{\textbf{Case 4:} $c<a<b=d$.}	
	Note that $f(F)=F$ so first suppose that there are some $y\in \Int F$ and some $m\geq 0$ such that $f^m(y)=d$.
	Denote $\eps=\dist(a,c)/2$.
	Since $f^m(F)=F$, there is some $\delta>0$ such that $\dist_H(D,F)<\delta$ implies $\dist_H(f^m(D),F)<\eps$ {and $y\in D$}.
	There is strictly decreasing sequence of negative integers $n_k$ such that $\dist_H(A_{n_k},F)<\delta$ and {so} $y\in A_{n_k}$ for all $k\geq 0$.
	But then $B\subset f^m(A_{n_k})=A_{n_k+m}$ for all $k\geq 0$, contradicting Lemma~\ref{lem:B_not_sub_Ai}.
	We conclude that $d\notin\bigcup_{n\geq 0}f^n(\Int F)$.
	In particular we have $f(d)=d$ or $f(c)=d$.
	
	Suppose $f(c)=d$.
	If $f(y)=c$ for some $y\in \Int F$ then $f^2(y)=d$, contradicting what has already been proven so $f(d)=c$.
	Hence, either $f(d)=d$ or $f(d)=c$ and $f(c)=f(d)$ so in both cases we have $f^2(d)=d$.
	{By the property} $s\alpha(\tilde{f})=s\alpha(\tilde{f}^2)$ and  $\omega_{\tilde{f^2}}(B)=\{F\}$, we can replace $f$ with $f^2$, that is, we may assume that $f(d)=d$ holds. {After this modification, the image $f(c)$ of $c$ is not completely clear anymore. We will analyze it later}. 
	
	Let us now prove that $A_i\subset F$ for all $i\leq 0$.
	Suppose on the contrary that $A_r\setminus F\neq\emptyset$ for some $r\leq 0$.
	Then $A_i\setminus F\neq\emptyset$ for all $i\leq r$.
	Hence, whenever $j\leq r$ and $\dist_H(A_j,B)<\eps$, necessarily $d\in A_j$. But then $d\in A_t$ for all  $j\leq t\leq 0$.
	There are infinitely many such $j\leq 0$ so we conclude that $d\in A_i$ for all $i\leq 0$.
	But now $\dist_H(A_i,F)<\eps$ implies $B\subset A_i$ so, by Lemma~\ref{lem:B_not_sub_Ai}, $B\notin\alpha(\{A_i\}_{i\leq 0})$, which is a contradiction.
	
	So far we have proven that $A_i\subset F$ for all $i\leq 0$ and $f(d)=d$. Let us now prove that $f(c)=c$. 
	Suppose on the contrary, that $f(c)\neq c$.
	Since $f(F)=F$ and since $f(d)=d$, it follows that $f(z)=c$ for some $z\in\Int F$	.
	Let $\eps_1=\min\{\dist(c,z),\dist(d,z)\}>0$.
	There is some $0<\delta_1<\eps_1$ such that $\dist_H(F,D)<\delta_1$ implies $\dist_H(F,f(D))<\eps_1$.
	There is a strictly decreasing sequence $\{m_k\}_{k\geq 0}$ of negative integers such that $\dist_H(F,A_{m_k})<\delta_1$ for all $k\geq 0$.
	But then $A_{m_k+1}$ and $A_{m_k+2}$ both lie in $F$ and contain $c$ so, for all $k\geq 0$, either \[A_{m_k+2}=f(A_{m_k+1})\subset A_{m_k+1}\quad\text{or}\quad A_{m_k+2}=f(A_{m_k+1})\supset A_{m_k+1}.\] 
	This implies that $\{A_i\}_{i\leq 0}$ form a nested sequence.
	But this is in contradiction with $B,F\in\alpha\{A_i\}_{i\leq 0}$.
	Indeed, $f(c)=c$.
	
	If there was a fixed point $w$ in $( c,a) $, using $F\in\alpha(\{A_i\}_{i\leq 0})$, we easily see that $w\in A_i$ for all $i\leq 0$, contradicting $B\in\alpha(\{A_i\}_{i\leq 0})$, so $\Fix(f)\cap( c,a)=\emptyset$.
	This, combined with Lemma~\ref{lem:nofix}, implies that either $f(w)\in( w,d)$ for all $w\in ( c,a)$ or $f(w)\in( c,w)$ for all $w\in ( c,a)$.
	
	First consider the case $f(w)\in( c,w)$ for all $w\in ( c,a)$.
	Take some $w_1\in ( c,a)$.
If $A_i\cap( c,w_1)\neq\emptyset$ then $A_j\cap( c,w_1)\neq\emptyset$ for all $i\geq j\geq 0$.
	But $A_i\cap( c,w_1)\neq\emptyset$ for infinitely many $i\leq 0$ so	$A_i\cap( c,w_1)\neq\emptyset$ for all $i\leq 0.$
	This is in contradiction with $B\in\alpha(\{A_i\}_{i\leq 0})$.
	
	It remains to consider the case $f(w)\in( w,d)$ for all $w\in ( c,a)$.
    {Since $\omega_{\tilde{f}}(B)=\{F\}$, we cannot have $\tilde{f}(B)\subset B$ and therefore }
$f(B)=[a_1,d]$ for some $c<a_1<a$. But $f([a_1,a])\subset[a_1,d]$ so $f([a_1,d])=[a_1,d]$.
	Since {we already proved that $A_i\subset F$ {and since $B\in \alpha(\{A_i\}_{i\leq 0})$}, we have }$A_i\subset [a_1,d]$ for infinitely many $i\leq 0$, {and hence} we conclude that $A_i\subset [a_1,d]$ for all $i\leq 0$.
	This contradicts $F\in \alpha(\{A_i\}_{i\leq 0})$ 	
\end{proof}	

\section{Wandering intervals}\label{sec:5}

In this section we consider the case when $\alpha$-limit set of a backward branch in the induced system contains a nondegenerate wandering interval.
The first goal is to present Example~\ref{ex:wandering}, showing that this case is in fact possible.
Later we continue by proving that all such cases are very similar to the presented example.

Let us denote by $T$ the standard tent map $T\colon[0,1]\to[0,1]$,

\begin{equation*}
  T(x)=\begin{cases}
    2x, & \text{for $0\leq x\leq\frac{1}{2}$}.\\
    2(1-x), & \text{for $\frac{1}{2}<x\leq 1$ }.
  \end{cases}
\end{equation*}

The following properties of map $T$ are well known.
 
\begin{proposition}\label{prop:tent}\leavevmode\\[-5pt]
\begin{enumerate}
\item For every $n\in\mathbb{N}$ the n-th iterate of $T$ is defined by
\begin{equation*}
  T^n(x)=\begin{cases}
    2^n\left(x-\frac{i}{2^{n-1}}\right), & \text{if $ x\in\left[\frac{2i}{2^n},\frac{2i+1}{2^n}\right]$}.\\
     -2^n\left(x-\frac{i+1}{2^{n-1}}\right), & \text{if $ x\in\left[\frac{2i+1}{2^n},\frac{2i+2}{2^n}\right]$ }
  \end{cases} \ \text{for $i=0,1,\ldots,2^{n-1}-1$.}
\end{equation*}
\item Point $x\in[0,1]$ is periodic under $T$ if and only if $x$ is a rational number of the form $m/p$, where $m$ is an even positive integer and $p$ is an odd positive integer with $m<p$.
\item Point $x\in[0,1]$ is rational if and only if $x$ is eventually periodic under $T$.
\end{enumerate} 
\end{proposition}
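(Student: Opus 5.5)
Proposition~\ref{prop:tent} collects classical facts about the tent map, and I would prove its three parts in order, each feeding the next.

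For part (1), I will argue by induction on $n$. The case $n=1$ is the definition of $T$, with $i=0$. For the inductive step I will write $T^{n+1}=T^n\circ T$ and split $[0,1]$ into $[0,1/2]$ and $[1/2,1]$. On $[0,1/2]$ we have $T(x)=2x$. A point $x\in[2i/2^{n+1},(2i+1)/2^{n+1}]$ is then sent to $2x\in[2i/2^n,(2i+1)/2^n]$, and the inductive formula gives $2^n(2x-i/2^{n-1})=2^{n+1}(x-i/2^n)$, as required. The same computation handles the decreasing pieces. On $[1/2,1]$ we have $T(x)=2-2x$, which reverses orientation, so increasing pieces become decreasing ones with a shifted index. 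I will carry out this index bookkeeping explicitly. The equivalent picture is that $T^n$ is the piecewise linear map with slopes $\pm2^n$ on the dyadic intervals of length $2^{-n}$, alternating between $0$ and $1$ at the dyadic points. I will then verify that the stated formula has exactly this description, which is the cleanest way to finish.

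For part (3), I will use the fact that $T$ maps rationals to rationals and preserves denominators in the following sense. If $x=m/q$ in lowest terms, then $T(x)=m'/q'$ with $q'\mid q$ (when $T(x)=2x$ or $2-2x$ and $q$ is odd), or $q'=q/2$ (when $q$ is even). Hence the orbit of $m/q$ lies in the finite set $\{k/q:0\le k\le q\}$, so it is finite, and $x$ is eventually periodic. Conversely, if $T^{n+p}(x)=T^n(x)$, then by part (1) this is a linear equation $\pm2^{n+p}x+a=\pm2^nx+b$ with integer $a,b$. Its leading coefficient is nonzero because $2^{n+p}\neq 2^n$ when $p\geq1$, and in the case of opposite signs the coefficient is $2^{n+p}+2^n\neq0$. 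So $x$ is rational.

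Part (2) is the main obstacle, since it requires pinning down exactly which rationals are periodic. Write $x=r/q$ in lowest terms.
\begin{itemize}
\item[(a)] If $q$ is even, then $T$ strictly lowers the power of $2$ in the denominator while the orbit stays in $\{k/q\}$. A periodic point would have to return to the same denominator, which is impossible. Hence periodic points have odd denominator $p=q$.
\item[(b)] If $p$ is odd, then $T$ maps the set $S_p=\{k/p\}$ into itself. Writing $k$ for the numerator, $T$ acts by $k\mapsto 2k$ or $k\mapsto 2p-2k$, so every image has even numerator.
\item[(c)] To show that every point of $S_p$ with even numerator has a preimage in $S_p$, I take $k$ even and use the preimage $k/2p\cdot$, that is, $(k/2)/p\in[0,1/2]$. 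So the even-numerator points $E_p$ are mapped into themselves, and each point of $E_p$ has a preimage in $E_p$.
\item[(d)] Since $E_p$ is finite and $T$ is onto $E_p$, the restriction $T|_{E_p}$ is a bijection. Hence every point of $E_p$ is periodic.
\end{itemize}
Combining (a)--(d), points with odd $p$ and even $m$ are periodic. Conversely, a periodic point is itself an image $T(y)$ of another point of its orbit, which by (a) has odd denominator, so by (b) the point has even numerator. This gives the stated characterization, with the boundary cases ($x=0$, i.e.\ $m=0$, and the requirement $m<p$) to be handled consistently with the convention in the statement; note that $0$ is fixed, so the statement's \emph{positive} must be read loosely there.
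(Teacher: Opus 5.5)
The paper gives no proof of this proposition; it just calls these properties well known. Your argument therefore has to stand on its own. Parts (1) and (3) are fine: on $[1/2,1]$ the index shift $i'=2^n-i-1$ does give $2^{n+1}(x-i/2^n)$, and the additive constants in (1) are integers, which is what your rationality argument needs. You are also right that $0$ is a fixed point excluded by the word ``positive''.

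The gap is in step (c) of part (2). The preimage $(k/2)/p$ you exhibit lies in $S_p$, but it lies in $E_p$ only when $4\mid k$. For example, with $p=5$ and $x=2/5$ your preimage is $1/5\notin E_5$, and indeed $1/5$ is not periodic, since its orbit is $1/5,2/5,4/5,2/5,\dots$. So (b) and (c) as written give only $T(S_p)=E_p$ and $T(E_p)\subseteq E_p$. These do not imply that $T|_{E_p}$ is onto $E_p$, which is exactly what (d) needs to conclude that $T|_{E_p}$ is a permutation. Without surjectivity, a point of $E_p$ could a priori be hit only from outside $E_p$, and so be merely preperiodic.

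The missing observation is the symmetry $T(x)=T(1-x)$. For $k=2j$, the two preimages of $k/p$ are $j/p$ and $(p-j)/p$. Since $p$ is odd, exactly one of $j$ and $p-j$ is even, so one preimage lies in $E_p$. Equivalently, $x\mapsto 1-x$ sends $k/p$ to $(p-k)/p$ and flips the parity of the numerator, so $S_p=E_p\cup(1-E_p)$. Hence $T(E_p)=T(S_p)\supseteq E_p$. With this, $T|_{E_p}$ is a surjective, hence bijective, self-map of a finite set, and the rest of your argument for (2) goes through.
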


Let us denote $S=\bigcup_{n\geq 0}\left\{\frac{i}{2^n}\colon 0\leq i\leq 2^n\right\}$.

\begin{lemma}\label{lem:tent_1}
Let $x\in [0,1]$ be a transitive point of $T$. For every interval $[a,b]$ such that $0<a<x<b<1$, there is some $n\in\mathbb{N}$ and $[c,d]$ such that $(a+x)/2<c<x<d<(x+b)/2$ and $T^{n}([c,d])=[a,b]$.  
\end{lemma}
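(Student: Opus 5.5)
Let $x$ be a transitive point of $T$ and $0<a<x<b<1$. Since $x$ is transitive, it is irrational: otherwise its orbit would be finite by Proposition~\ref{prop:tent}(3). Put $\delta=\min\{(x-a)/2,(b-x)/2\}>0$. The plan is to locate a dyadic interval around $x$ inside $(x-\delta,x+\delta)$ that $T^n$ maps onto all of $[0,1]$. We then shrink it to a subinterval whose image is exactly $[a,b]$, using the explicit formula for $T^n$ in Proposition~\ref{prop:tent}(1).

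\textbf{Step 1.} Pick $n$ large enough that $3/2^{n}<\delta$. Since $x$ is irrational, $x\notin S$, so $x$ lies in the open interior of a unique dyadic interval $J_1=[i/2^n,(i+1)/2^n]$. By Proposition~\ref{prop:tent}(1), $T^n$ is affine on $J_1$ with slope $\pm 2^n$ and maps it onto $[0,1]$. If $x$ lies in the middle of $J_1$, we are in a good position. The difficulty is that $x$ may sit very close to an endpoint of $J_1$. In that case $T^n$ restricted to $J_1$ alone may not cover $[a,b]$ by an interval containing $x$ in its interior.

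\textbf{Step 2 (main point).} Instead take the union $J=J_0\cup J_1\cup J_2$ of $J_1$ with its two neighbouring dyadic intervals of level $n$. Both neighbours exist because $0<x<1$ and $n$ is large. Then $J\subset(x-\delta,x+\delta)$, and $x$ lies at distance at least $1/2^n$ from each endpoint of $J$. Since $T^n$ maps each of the three pieces affinely onto $[0,1]$ with alternating orientation, we have $T^n(J_0\cup J_1)=[0,1]=T^n(J_1\cup J_2)$.

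Let $y=T^n(x)\in(0,1)$; note $y$ is not an endpoint, since $x\notin S$. Suppose first that $y\in[a,b]$. Inside $J_1$ take the preimage interval $[c',d']$ of $[a,b]$ under the affine branch; it contains $x$. If $x$ is in its interior we are done, and this is the case unless $y\in\{a,b\}$. If $y=a$, say, extend across the nearer dyadic endpoint using the reflected branch on the adjacent interval.

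In general, $y$ need not lie in $[a,b]$. Then I would work instead with the connected set $K=\{z\in J: T^n(z)\in[a,b]\text{ and } z \text{ lies between}\ldots\}$. This gets messy, so the cleaner route is to use transitivity. Choose $m$ with $T^m(x)\in(a,b)$; this is possible by density of the orbit. Then choose $n\ge m$ large with $T^{n}(x)\in(a,b)$ and $3/2^{n}<\delta$. Such $n$ exists because the orbit of $x$ visits $(a,b)$ infinitely often, as it is dense and $x$ is not periodic.

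\textbf{Step 3.} With $y=T^n(x)\in(a,b)$, the affine branch of $T^n$ on $J_1$ is a homeomorphism onto $[0,1]$. Let $[c,d]=(T^n|_{J_1})^{-1}([a,b])$. Then $c<x<d$ because $a<y<b$, and $T^n([c,d])=[a,b]$. Finally, $[c,d]\subset J_1\subset(x-\delta,x+\delta)$, so $(a+x)/2<c$ and $d<(x+b)/2$. With this choice of $n$ the neighbouring intervals of Step~2 are not even needed.

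The only genuine obstacle is ensuring that $T^n(x)$ falls strictly inside $(a,b)$ for arbitrarily large $n$. This is handled by the density of the orbit of the transitive point $x$ together with the fact that a dense orbit in $[0,1]$ enters each open set infinitely often.
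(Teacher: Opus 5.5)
Your proof is correct and is essentially the paper's own argument. You choose a large $n$ with $T^n(x)\in(a,b)$ and $2^{-n}$ small, take the level-$n$ dyadic interval containing $x$ (which lies inside $\bigl((a+x)/2,(x+b)/2\bigr)$), and pull $[a,b]$ back through the injective affine branch of $T^n$ on that interval; the detour in Step~2 is unnecessary, as you note yourself, and can be deleted.
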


\begin{proof}
There is an $m\geq 2$ such that $\frac{1}{2^{m-1}}<\min\{\frac{x-a}{2},\frac{b-x}{2}\}$.
Take some $n\geq m$ such that $T^{n}(x)\in ( a,b)$.
Since $x\notin S$, there is some $k$, $0\leq k< 2^{n-1}$, such that $\frac{a+x}{2}<\frac{k}{2^{n-1}}<x<\frac{k+1}{2^{n-1}}<\frac{x+b}{2}$.
Furthermore, either $x\in \left(\frac{k}{2^{n-1}},\frac{2k+1}{2^n}\right)$ or $x\in \left(\frac{2k+1}{2^n}, \frac{k+1}{2^{n-1}}\right)$ so, without loss of generality, suppose the former and denote $J= \left[\frac{k}{2^{n-1}},\frac{2k+1}{2^n}\right]$.
By definition $x\in J\subset \left(\frac{a+x}{2},\frac{x+b}{2}\right)$.

Proposition~\ref{prop:tent}
implies $T^n(J)=[0,1]$ and $T^n\vert_J$ is injective.
Since $T^n(x)\in[a,b]$, it is enough to put $[c,d]\coloneqq\left(T^n\vert_J\right)^{-1}\left([a,b]\right)$.
\end{proof}

\begin{figure}[h]
\centering
\includegraphics[scale=1.15]{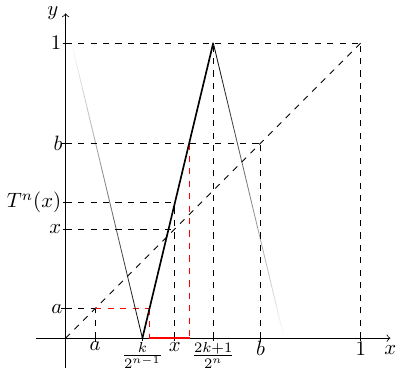}
\caption{Red colored interval represents $[c,d]$ from Lemma~\ref{lem:tent_1}.}
\end{figure}

\begin{lemma}\label{lem:tent_2}
Let $x\in[0,1]$ be a transitive point of $T$. For every interval $I_0=[a,b]$ such that $0<a<x<b<1$ there is a sequence $\{I_n\}_{n\geq 1}$ of compact intervals and a sequence $\{j_n\}_{n\geq 1}$ in $\mathbb{N}$ such that $I_{n+1}\subset I_n$ for all $n\geq 0$, $\bigcap_{n\geq 0} I_n=\{x\}$ and $T^{j_n}(I_n)=I_{n-1}$ for all $n\geq 1$.
\end{lemma}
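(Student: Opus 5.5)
We construct the sequence inductively by repeated application of Lemma~\ref{lem:tent_1}, making the intervals shrink geometrically around $x$.

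Start with $I_0=[a,b]$, where $0<a<x<b<1$. Suppose $I_{n-1}=[a_{n-1},b_{n-1}]$ has already been built with $0<a_{n-1}<x<b_{n-1}<1$. Apply Lemma~\ref{lem:tent_1} to $x$ and $I_{n-1}$. This yields $j_n\in\mathbb{N}$ and an interval $I_n=[a_n,b_n]$ with
\[
\frac{a_{n-1}+x}{2}<a_n<x<b_n<\frac{x+b_{n-1}}{2}
\]
and $T^{j_n}(I_n)=I_{n-1}$. These inequalities give $a_{n-1}<a_n$ and $b_n<b_{n-1}$, so $I_n\subset I_{n-1}$, indeed $I_n\subset\Int I_{n-1}$. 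They also give $0<a_n<x<b_n<1$, so the hypotheses of Lemma~\ref{lem:tent_1} hold again and the induction continues.

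It remains to check that $\bigcap_{n\geq 0}I_n=\{x\}$. The inequalities give $x-a_n<(x-a_{n-1})/2$ and $b_n-x<(b_{n-1}-x)/2$. Hence $\diam I_n\leq 2^{-n}\diam I_0\to 0$. Every $I_n$ contains $x$, so the nested intersection is exactly $\{x\}$.

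All the real work was done in Lemma~\ref{lem:tent_1}, namely the existence of a preimage interval of $[a,b]$ under some iterate $T^n$ that lies close to $x$ and still contains $x$. That is where transitivity of $x$ was used: it guarantees $T^n(x)\in(a,b)$ for arbitrarily large $n$, so that a lap of $T^n$ around $x$ is short enough. Here the only point needing care is that the strict inequalities in Lemma~\ref{lem:tent_1} keep $x$ in the interior of each new interval, which is what allows the lemma to be reapplied at every step.
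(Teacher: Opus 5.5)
Your proof is correct and takes the same approach as the paper, which says only that the claim follows by an easy induction using Lemma~\ref{lem:tent_1}. You carry out that induction explicitly, checking that the strict inequalities keep $0<a_n<x<b_n<1$ (so the lemma can be reapplied) and give $\diam I_n\leq 2^{-n}\diam I_0$, which yields $\bigcap_{n\geq 0}I_n=\{x\}$.
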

\begin{proof}
The claim follows by an easy induction using Lemma~\ref{lem:tent_1}.
\end{proof}

\begin{example}\label{ex:wandering}
Let $I=[0,1]$ and let $T\colon I\to I$ be the standard tent map.
Fix some transitive point $x$ of $T$.
 Denote $R=\{y\in I\colon x\in\mathcal{O}_T(y)\}\cup\mathcal{O}_T(x)$.
 Note that $R$ is forward and backward invariant, countable subset of $I$.
 We will perform a kind of Denjoy construction (see~\cite{Devaney} for more details).
 To that end, we ``replace" each $y\in R$ by an interval $I_y$ of length $d_y$, keeping the sum $\sum_{y\in R} d_y$ finite. For $y\in I\setminus R$ set $I_y=\{y\}$.
 This way we obtain new arc $J=\bigcup_{y\in I}I_y$.
 
 We can naturally extend $T$ to a continuous map $f\colon J\to J$ such that $f(I_y)=I_{T(y)}$ for every $y\in[0,1]$.
 Namely, we map $I_y$ to $I_{T(y)}$ by increasing homeomorphism when $y\in[0,\frac{1}{2}]$ and by decreasing homeomorphism otherwise. 
 Clearly, $T$ is semi-conjugate to $f$
 via a factor map $\pi\colon J\to I$ which projects every $I_{y}$ to $\{y\}$.
 
 Fix any $A\in \mathcal{C}(J)$ such that $I_x\subset \Int A$. 
 Then $\pi(A)$ is a subinterval of $I$ with $x\in\Int \pi(A)$.
 By Lemma~\ref{lem:tent_2}, there is a backward branch $\{B_n\}_{n\leq 0}$ of $B_0=\pi(A)$ such that $\{x\}\in\alpha\left(\{B_n\}_{n\leq0}\right)$.
 Therefore, $I_x$ is an asymptotically degenerate, nondegenerate subinterval of $J$ such that $I_x\in\alpha\left(\{\pi^{-1}(B_n)\}_{n\leq 0}\right)$. 
 \end{example}

The following Lemma~\ref{lem:wandering} provides a description on all the possible situations in which a backward branch in the induced system has a nondegenerate wandering interval in its $\alpha$-limit set. 

\begin{lemma}\label{lem:wandering}
	Let $\{A_n\}_{n\leq 0}$ be a backward branch of some $A\in\mathcal{C}(I)$ and let $B\in\alpha(\{A_n\}_{n\leq 0})$ be nondegenerate and wandering, i.e. $f^i(B)\cap f^j(B)=\emptyset$ for all $i\neq j$.  
    Then there is some basic set $\omega=B(K,f)$ such that:
    \begin{enumerate}
        \item $B\cap\omega\neq\emptyset $,
        \item $B\cap\omega(f)=(\partial B\cap \omega)\setminus \Per(f)$
        \item $B\subset K$
    \end{enumerate}
 Furthermore, let $\mathcal{K}=\{K_{(i)},i=1,2,\ldots, m\}$ be the set of intervals described in Remark~\ref{rem:model_BS}.
Then, for every $n\leq 0$, $A_n\subset K_{(i_n)}$ for some $1\leq i_n\leq m$ and $\omega_{\tilde{f}}(A)=\mathcal{K}$.
\end{lemma}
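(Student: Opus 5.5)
We will find the basic set through a point of $B$ whose backward orbit accumulates, and then use the fact that a wandering interval cannot meet the interior of a basic set.

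\textbf{Step 1: producing a point of $B$ in a basic set.} Since $B\in\alpha(\{A_n\}_{n\le 0})$, choose indices $n_k\to-\infty$ with $A_{n_k}\to B$. As $B$ is wandering, $\mathcal{O}(B)$ consists of pairwise disjoint intervals. By Lemma~\ref{lem:intersects_orbit}, every $A_n$ meets $\mathcal{O}(B)$, so $\mathcal{O}(B)$ is not eventually left by the branch. Choose $x\in A$ with a backward branch $\{x_n\}$, $x_n\in A_n$. By Lemma~\ref{lem:intersects_alpha} and its proof, $L=\alpha(\{x_n\})$ meets $B$.

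By Theorem~\ref{thm:maximal}, $L$ lies in a maximal $\omega$-limit set. This set cannot be a periodic orbit: $B$ is wandering, so it contains no periodic point. It cannot be solenoidal either, because points of a maximal solenoid are approached only in a controlled way by nested cycles of intervals, whereas backward images of a nondegenerate wandering interval must spread. More concretely, the $A_{n_k}$ close to $B$ map onto $A_{n_{k'}}$ close to $B$, which forces some covering $f^{m}(J)\supset J'$ of nondegenerate intervals near $B$. Covering relations of this type give periodic points arbitrarily close to $B$, hence a basic set.

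The main obstacle is exactly this exclusion of the solenoidal case. We would attempt it by showing that if $L$ were solenoidal with generating cycles $K_j$, then $B$ would lie in some $K_j$-component, and the nested, period-increasing structure would make the intervals $A_{n}$ shrink near $B$. That contradicts $A_{n_k}\to B$ with $B$ nondegenerate.

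\textbf{Step 2: properties (1)–(3).} Let $\omega=B(K,f)$ be the resulting basic set, so $B\cap\omega\neq\emptyset$, which is (1). Suppose $\Int B\cap\omega\neq\emptyset$. Then Remark~\ref{rem:model_BS} gives $\mathcal{O}(\overline U)=K$ for small $U\subset B$, which contradicts wandering. Hence $B\cap\omega\subset\partial B$. No point of $\omega(f)$ can lie in $\Int B$, since that point would return near itself and force $f^i(B)\cap f^j(B)\neq\emptyset$. Endpoints in $\omega(f)$ cannot be periodic, again by wandering. Together these give (2). Since $K$ is the minimal cycle of intervals containing $\omega$, and $B$ is an interval meeting $\omega$ but not containing a nondegenerate piece outside $K$ (otherwise an $\omega$-point near the outside endpoint would be interior to $B$), we get (3). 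More precisely, via the almost conjugacy $\phi$ of Lemma~\ref{lem:modelBS}, $\phi(B)$ is a single point, because $\phi$ collapses wandering intervals. Therefore $B\subset\phi^{-1}(y)\subset K_{(i)}$.

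\textbf{Step 3: the final claim.} $A_{n_k}$ is close to $B\subset K_{(i)}$. If some $A_n$ were not contained in a single $K_{(i)}$, then $\phi(A_n)$ would be a nondegenerate interval, or would stick out of $K$. In the first case, mixing of $g^m$ on $Y_i$ makes $\phi(A_{n'})$ for later $n'$ of diameter bounded below, contradicting $\phi(A_{n_k})\to\phi(B)$, a point. Points outside $K$ cannot return arbitrarily close to $B$ through the branch. Thus $A_n\subset K_{(i_n)}$ for all $n$. Finally, $A$ contains $f^{-n_k}(B)$-type pieces meeting $\omega$ in their interior, since $\phi(A)$ is nondegenerate. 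The last statement of Remark~\ref{rem:model_BS} then gives $\omega_{\tilde f}(A)=\mathcal{K}$.
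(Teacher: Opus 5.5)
\textbf{There is a genuine gap: your Step 1 never excludes the solenoidal case,} and this is exactly what produces the basic set. You flag it yourself as the main obstacle, and the ``more concretely'' argument does not work. Covering relations $f^m(J)\supset J'$ between intervals near $B$ give at best periodic points near $B$. Periodic points, even infinitely many, do not yield a basic set: zero-entropy maps have plenty of them.

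Your proposed repair also aims at the wrong conclusion. Nothing forces the $A_n$ to shrink; what can be shown is that they are \emph{wandering}. The missing idea is a trapping argument:
\begin{itemize}
\item Iterates of the wandering $B$ are pairwise disjoint, so $\diam f^m(B)\to 0$.
\item If $y\in B\cap Q_{\max}$, then $\omega_f(y)=Q_{\min}$ contains a point of $\Int M_r$ for each cycle $M_r$ of the generating sequence. Hence $f^m(B)\subset \Int M_r$ for some $m$.
\item $\alpha(\{A_n\}_{n\le 0})$ is $\tilde f$-invariant, so $f^m(B)$ belongs to it. Thus some $A_i$ with $i$ arbitrarily negative lies in $M_r$, and forward invariance of $M_r$ gives $A_n\subset M_r$ for all $n\le 0$.
\item Therefore every $A_n$ lies in $Q=\bigcap_r M_r$ and is wandering. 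This is impossible, because any two $A_j,A_k$ within $\diam B/2$ of the nondegenerate $B$ intersect.
\end{itemize}

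The same trapping argument is what Steps 2 and 3 need. As written, your reasoning there is circular or does not close.
\begin{itemize}
\item The almost conjugacy $\phi$ is defined only on $K$, so ``$\phi(B)$ is a point'' already presupposes (3).
\item In Step 3, mixing controls $\phi(A_{n'})$ for \emph{later} indices $n'$, whereas $A_{n_k}\to B$ along $n_k\to-\infty$. No contradiction arises; indeed $\phi(A)$ is nondegenerate while $\phi(B)$ is a point.
\item The correct route: take $z\in B\cap\omega$. Its orbit is infinite, so $f^r(B)\subset\Int K_{(i)}$ for some $r$. Since $f^r(B)\in\alpha(\{A_n\}_{n\le 0})$, elements $A_{n_k}$ with $n_k\to-\infty$ lie in $K_{(i)}$. $\tilde f$-invariance of $\mathcal K$ then gives $A_n\subset K_{(i_n)}$ for every $n$, and (3) follows by passing to the limit.
\item You assert that $\phi(A)$ is nondegenerate but do not prove it. What gives $\Int A\cap\omega\neq\emptyset$ is this: $A\cap f^{k}(B)\neq\emptyset$ for each of the infinitely many $k$ with $A_{-k}\cap B\neq\emptyset$. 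An interval meeting infinitely many pairwise disjoint intervals contains all but at most two of them in its interior, and each $f^k(B)$ meets $\omega$.
\item For (2), you only show $B\cap\omega(f)\subset\partial B\setminus\Per(f)$. You do not show that these endpoints belong to this particular basic set $\omega$, which the stated equality requires.
\end{itemize}
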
	
\begin{proof}
	 Suppose there is a backward branch $\{A_n\}_{n\leq 0}$ of $A\in\mathcal{C}(I)$ with wandering and nondegenerate $B\in\alpha\left(\{A_n\}_{n\leq 0}\right)$.
  We claim that, for all $i\leq 0$, $f^n(B)\subset A_i$ for infinitely many $n\geq 0$.
	 To prove this, fix some $i\leq 0$.
	Since $B\in\alpha\left(\{A_n\}_{n\leq 0}\right)$, there is some strictly increasing sequence $\{n_k\}_{k\geq 0}$ of positive integers such that $A_{i-n_k}\cap B\neq\emptyset$ for all $k\geq 0$.
	But then, since $f^{n_k}(A_{i-n_k})=A_i$, we get that $A_i\cap f^{n_k}(B)\neq\emptyset$ for all $k\geq 0$.
	But, since $f^{n_k}(B)\cap f^{n_j}(B)=\emptyset$ for $j\neq k$, $f^{n_k}(B)\subset {A_i}$ for all but at most two distinct $k\geq 0$.
	Indeed, for any $i\leq 0$, $f^n(B)\subset A_i$ for infinitely many $n\geq 0$.
	
	By Lemma~\ref{lem:intersects_alpha}, $B\cap s\alpha(\tilde{f})\neq\emptyset$ and, in particular, $B\cap \omega(f)\neq\emptyset$.
	But $B$ is wandering, so $B\cap\Per(f)=\emptyset$.
	Therefore, since $B$ is asymptotically degenerate, $B\cap \omega(f)$ lies in a single maximal $\omega$-limit set.
	By {Remark~\ref{rem:alpha}}, we have two different cases:
	\begin{enumerate}
	\item  all the points in $B\cap \omega(f)$ are non-periodic and belong to a single maximal basic set, or
	\item all the points in $B\cap \omega(f)$ belong to a {maximal} solenoid. 
	\end{enumerate}
	Take some $y\in B\cap \omega(f)$.
	
	{Firstly we consider the case $y\in Q_{max}$, where $Q_{max}\subset Q=\cap_n M_n$ is a maximal solenoid and $\{M_n\}_{n\geq 0}$ some nested sequence of cycles of intervals.}
	First let us prove that for every $i\leq 0$, $A_i\subset Q$.
	Suppose on the contrary, that there are some $r\geq 0$ and some $i_1\leq 0$
such that $A_{i_1}\setminus M_r\neq \emptyset$.
	Take some $z\in Q_{min}\cap \Int M_r$.
 $B$ is asymptotically degenerate and $z\in\omega_f(y)$, so $\{z\}\in\omega_{\tilde{f}}(B)$.
 Therefore, there is some $m\geq 0$ such that $f^m(B)\subset\Int M_r$.
 This, together with the fact that $f^m(B)\in\alpha\left(\{A_i\}_{i\leq 0}\right)$ implies that there is some $i_2<i_1$ such that $A_{i_2}\subset M_r$.
 But then $f^{i_1-i_2}(A_{i_2})=A_{i_1}\subset M_r$, a contradiction.
 Indeed, for all $i\leq 0$, $A_i\subset Q$ and hence $A_i$ are wandering.

	But $B\in\alpha\left(\{A_i\}_{i\leq 0}\right)$ and $B$ is nondegenerate so for all $j,k$ such that $\dist(A_j,B)\leq\diam B/2$ and $\dist(A_k,B)\leq\diam B/2$, $A_j\cap A_k\neq\emptyset$, which is a contradiction.
Therefore, there is a maximal basic set $\omega=B(K,f)$ such that $B\cap \omega\neq\emptyset$ and $B\cap\omega(f)\subset\omega\setminus \Per(f)$.

 Denote by $\mathcal{K}=\{K_{(i)}\colon 1\leq i\leq m\}$ a set of subintervals described in Remark~\ref{rem:model_BS}.
    The property $\omega\cap B\subset\partial B$ easily follows from the facts that $B$ is wandering and that periodic points are dense in $\omega$. Let us show that $B\subset K$.
Take some $z\in\omega\cap B$. Note that $z$ has an infinite orbit, since otherwise $B$ would not be wandering.
But $B$ is asymptotically degenerate and $\mathcal{O}_f(z)$ is an infinite subset of $K$ so there is an $r\geq 0$ such that $f^r(B)\subset\Int K_i$ for some $1\leq i\leq m$.
But $f^r(B)\in \alpha\left(\{A_n\}_{n\leq 0}\right)$ so there is some strictly decreasing sequence $\{n_k\}_{k\geq 0}$ of negative integers such that $\lim_k A_{n_k}=f^r(B)$.
Then  $A_{n_k}$ is contained in some element of $\mathcal{K}$ for almost all $k\geq 0$, but $\mathcal{K}$ is $\tilde{f}$-invariant and hence $A_n$ is contained in some element of $\mathcal{K}$  for every $n\leq 0$.
In particular, $B\subset K$.

\begin{figure}[ht]
\centering
\includegraphics[scale=1]{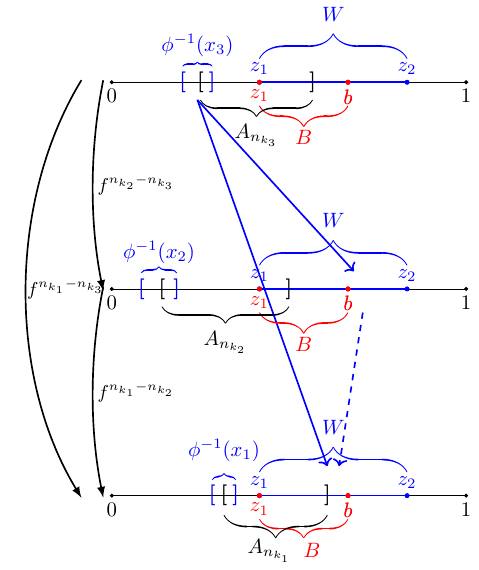}
\label{fig:wandering_model}
\caption{
Final step in the proof of Lemma~\ref{lem:wandering}.
The backward branch elements $A_{n_{k_i}}$ accumulate on $B=[z_1,b]\subsetneq W$ and cover only
part of $W$. Their images $\varphi(A_{n_{k_i}})$ are arcs ending at $\varphi(W)$, so at most
one boundary fibre can fail to lie in $A_{n_{k_i}}$; this forces $W\subset A_{n_{k_1}}$ and hence $B=W$.}
\end{figure}
 
Now let us show that $B\cap\omega=\partial B$.
Suppose on the contrary, that $B\cap\omega=\{z_1\}$ and $z_1\in\partial B$.
Then we may assume that $B=[z_1,b]$ is a proper subset of some wandering interval $W=[z_1,z_2]\subset K$ where also $z_2\in\omega$. 
Now pick a strictly decreasing sequence $\{n_k\}_{k\geq 0}$ of negative integers such that $\lim_k A_{n_k}=B$.
We can assume that $A_{n_{k_1}}\cap A_{n_{k_2}}\neq \emptyset$ and $z_2\not\in A_{n_{k_1}}$ for all $k_1,k_2\geq 0$.
Therefore, no $A_n$ is wandering for any $n\leq 0$ and hence $z_1\in A_{n_k}$ for all $k\geq 0$ and $A_{n}\setminus W\neq \emptyset$ for all $n\leq 0$.
We will prove even more, that $W\subset A_{n_k}$ for almost all $k\geq 0$.
Suppose on the contrary, that $W\setminus A_{n_k}\neq\emptyset$ for infinitely many $k\geq 0$.
 Suppose $W\subset K_{(1)}$ and let $g\colon Y\to Y$ be a mixing interval map such that there is a semi-conjugacy $\phi\colon K_{(1)}\to Y$ between $(K_{(1)},f^m)$ and $(Y,g)$, almost conjugating $f^m\vert_{\omega\cap K_{(1)}}$ and $g$.

 Let $k_1<k_2<k_3$ be such that $W\cap A_{n_{k_i}}\neq\emptyset$ and $W\setminus A_{n_{k_i}}\neq\emptyset$ for $i=1,2,3.$
Note that for every $i=1,2,3$, there can exist at most one $x_i\in\phi(A_{n_{k_i}})$ different from $\phi(W)$ such that $\phi^{-1}(x_i)\not\subset A_{n_{k_i}}$. Moreover, $W=\phi^{-1}(\phi(W))$ is wandering, i.e. for every $n\geq 0$, $f^{n}(W)\cap W=\emptyset$. Recall that every $A_n$ is contained in some element of $\mathcal{K}$. Therefore we must have $f^{n_{k_2}-n_{k_3}}(\phi^{-1}(x_3))=W$ and $f^{n_{k_1}-n_{k_3}}(\phi^{-1}(x_3))=W$.
But then \[f^{n_{k_1}-n_{k_2}}(W)=f^{n_{k_1}-n_{k_2}}(f^{n_{k_2}-n_{k_3}}(\phi^{-1}(x_3)))=f^{n_{k_1}-n_{k_3}}(\phi^{-1}(x_3))=W,\] a contradiction.
Therefore, $B=W$ and hence $B\cap\omega=\partial B$.
We have already proved that $A\subset K_{(i)}$ for some $1\leq i\leq m$ and that $\Int A\cap \partial B\neq\emptyset$.
\end{proof}

\section{Characterization of \texorpdfstring{$\alpha$}{alpha}-limit sets in \texorpdfstring{$\mathcal{C}(I)$}{C(I)}}\label{sec:6}

Goal of this section is to provide full description of $\alpha$-limit sets of backward branches in the induced system.
We start with some preparatory lemmas.

\begin{lemma}\label{lem:periodic_alpha}
    Let $A\in\mathcal{C}(I)$, 
     let $\{A_{n}\}_{n\leq 0}$ be its backward branch {and let $B\in \alpha(\{A_{n}\}_{n\leq 0})$}. If $B\in\Per(\tilde{f})$ is nondegenerate then $\alpha(\{A_{n}\}_{n\leq 0})\cap\Per(\tilde{f})=\mathcal{O}_{\tilde{f}}(B)$. 
\end{lemma}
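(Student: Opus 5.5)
The inclusion $\mathcal{O}_{\tilde f}(B)\subset \alpha(\{A_n\}_{n\leq 0})\cap\Per(\tilde f)$ is immediate, because $\alpha$-limit sets of backward branches are strongly invariant. So the plan concentrates on the reverse inclusion. Take any $C\in\alpha(\{A_n\}_{n\leq 0})\cap\Per(\tilde f)$; the goal is to show $C\in\mathcal{O}_{\tilde f}(B)$.

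\textbf{Reduction to fixed elements.} Let $p$ be a common period of $B$ and $C$. For each residue $r$, the sequence $\{A_{np+r}\}_{n\leq 0}$ is a backward branch for $\tilde f^p$, and $\alpha(\{A_n\}_{n\le 0})$ is the union of the images under powers of $\tilde f$ of the $\alpha$-limit sets of these subbranches. After replacing $C$ by a suitable $\tilde f^j(C)$ and choosing $r$, we may therefore assume the following: $f$ is replaced by $f^p$, $B$ and $C$ are both fixed by $\tilde f$, and both lie in the $\alpha$-limit set of one backward branch. It then suffices to show $B=C$.

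\textbf{Intersection and excluding $B\setminus C\neq\emptyset$.} By Lemma~\ref{lem:intersects_orbit}, $C$ meets some element of $\omega_{\tilde f}(B)=\{B\}$, so $B\cap C\neq\emptyset$ and $B\cup C$ is an interval. Suppose $B\setminus C\neq\emptyset$. By Lemma~\ref{lem:fixed_sub_fixed} there is a periodic point $q\in B\setminus C$. Since $f(C)=C$, no point of $\mathcal{O}_f(q)$ lies in $C$; otherwise $q$ itself, being a forward image of that point, would lie in $C$. Hence the finite set $\mathcal{O}_f(q)$ has positive distance $\delta$ from $C$. Since $f(B)=B$, we also have $\mathcal{O}_f(q)\subset B$.

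If $q\in\Int B$, then $q\in A_n$ for infinitely many $n$, because $A_n$ is Hausdorff-close to $B$ infinitely often. This gives $A_m\cap\mathcal{O}_f(q)\neq\emptyset$ for every $m\leq 0$, which contradicts the fact that $A_m$ lies in the $\delta$-neighbourhood of $C$ for infinitely many $m$.

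\textbf{The endpoint case (expected main obstacle).} The delicate situation is when every periodic point of $B\setminus C$ is an endpoint of $B$. Here I would first try to find a periodic point of $B\setminus C$ that does lie in $\Int B$. One way is to apply Lemma~\ref{lem:fixed_sub_fixed} to the fixed continuum $C$ and a fixed subcontinuum of $B$ that avoids the problematic endpoint, for instance $B\cap C$ enlarged by the invariant hull of the interior of $B$. Failing that, I would use the dynamics near the endpoint $q$ directly, in the spirit of Case~3 and Case~4 of Lemma~\ref{lem:asympt_per}. By Lemma~\ref{lem:nofix}, there are no fixed points (of the relevant power) in the open gap between $q$ and $C$, so points there move monotonically. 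I would then argue that the $A_n$ close to $B$ must cover a one-sided neighbourhood of $q$, and that this covering propagates forward in time. That again forces all later $A_m$ to stay away from $C$, giving the same contradiction as above.

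\textbf{The symmetric case $C\setminus B\neq\emptyset$.} If $C\setminus B\neq\emptyset$, then $C$ is nondegenerate, and the same argument with the roles of $B$ and $C$ exchanged yields a contradiction. Having excluded both $B\setminus C\neq\emptyset$ and $C\setminus B\neq\emptyset$, we conclude $B=C$, which proves the lemma.
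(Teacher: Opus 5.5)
Your reduction to one branch accumulating on two distinct $\tilde f$-fixed continua $B,C$ is correct and matches the opening of the paper's proof. So are the fact $B\cap C\neq\emptyset$ and the exclusion of a periodic point of $B\setminus C$ in $\Int B$. The gap is that your ``endpoint case'' is the entire substance of the lemma, and you only list strategies, neither of which closes it.

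The first strategy cannot work. Write $B=[b_1,b_2]$, $C=[c_1,c_2]$ with $b_1<c_1\le b_2\in C$. What remains after your previous step is exactly the configuration in which $b_1$ is fixed and $f$ has no periodic point in $(b_1,c_1)$. This is compatible with $B$ and $C$ both being $\tilde f$-fixed, so there is no interior periodic point to find. The configuration must instead be excluded using that a single branch returns near both $B$ and $C$. (When $f(w)>w$ on $(b_1,c_1)$, the natural candidates $[w,b_2]$ are merely forward invariant, so Lemma~\ref{lem:fixed_sub_fixed} does not apply to them. The $\tilde f$-fixed continuum $\bigcap_k f^k([w,b_2])$ they generate lies in $B\cap C$.)

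Your second strategy handles only one half of the dichotomy from Lemma~\ref{lem:nofix}.

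\emph{Case $f(w)<w$ on $(b_1,c_1)$.} Then $[b_1,w)$ is forward invariant and every $A_n$ close to $B$ meets it, so all later $A_m$ stay away from $C$. This is the paper's easy subcase, and your sketch covers it.

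\emph{Case $f(w)>w$ on $(b_1,c_1)$.} Here $b_1\notin A_n$ for all sufficiently negative $n$: a fixed $b_1$ lying in infinitely many $A_n$ would lie in all of them, and then $C$ could not be approached. So the $A_n$ close to $B$ do \emph{not} contain a one-sided neighbourhood of $b_1$, and their points near $b_1$ drift towards $C$; nothing propagates.

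The missing idea is to use the opposite side. Since $[w,b_2]\cup C$ is forward invariant, each $A_n$ must protrude beyond $c_2$. For $A_{n_1}$ close to $C$, the protruding arc $L=\overline{A_{n_1}\setminus B}$ satisfies $f(L)\supset L$. The part of $A_{n_1}$ inside $B$ is trapped in a forward invariant interval bounded away from $b_1$. Hence a later return $A_{n_2}$ near $B$ forces some image $f^{k}(L)$ to come close to $b_1$. Because the images of $L$ increase, every subsequent image does too, which contradicts a still later return $A_{n_3}$ near $C$.

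Finally, the nested configuration $C\subset\Int B$ is not addressed at all and needs a separate growth argument:
\begin{enumerate}
\item $C\subset A_n$ for every $n$.
\item An $A_{n_1}$ close to $C$ satisfies $A_{n_1}\subset f^r(A_{n_1})=A_{n_2}$, where $A_{n_2}$ is close to $B$.
\item Since $A_{n_1}\subset\Int B$ is contained in infinitely many $A_n$, every $A_m$ contains $C$ together with some $f^i(A_{n_1})$, $0\le i<r$.
\item This keeps every $A_m$ a definite Hausdorff distance away from $C$, contradicting $C\in\alpha(\{A_n\}_{n\le 0})$.
\end{enumerate}
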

\begin{proof}
    Suppose on the contrary, that there is some $C\in\alpha(\{A_{n}\}_{n\leq 0})\cap\Per(\tilde{f})\setminus\mathcal{O}_{\tilde{f}}(B)$.
    Take $p\geq 1$ such that $f^p(B)=B$ and $f^p(C)=C$.
    There are strictly decreasing seqences $\{n_k\}_{k\geq 0}$ and $\{m_k\}_{k\geq 0}$ such that $\lim_k A_{n_k}=B$ and $\lim_k A_{m_k}=C$.
    By considering remainders of the elements of these sequences modulo $p$, we see that there are some $0\leq r_1,r_2<p$ and a backward branch $\{A^{\prime}_{n}\}_{n\leq 0}$ of $A$ under $f^p$ such that $f^{r_1}(B)\in \alpha_{f^p}(\{A^{\prime}_{n}\}_{n\leq 0})$ and $f^{r_2}(C)\in \alpha_{f^p}(\{A^{\prime}_{n}\}_{n\leq 0})$.
    But $f^{r_1}(B)$ and $f^{r_2}(C)$ are distinct and fixed by $f^p$, so we can assume that $r_1=r_2=0$, $p=1$ and $A^\prime_n=A_n$ for all $n\leq 0$. 

    Note that $B\cap C\neq\emptyset.$ This is because $\dist_H(A_n,B)\leq\diam B/2$ implies $A_n\cap B\neq\emptyset$, $B$ is fixed and therefore $A_n\cap B\neq\emptyset$ for all $n\leq 0$. This makes $\dist_H(C,B)>0$ impossible so, indeed, $B\cap C\neq\emptyset.$

    Since $B\neq C$, we can assume that $B\setminus C\neq\emptyset$.
    By Lemma~\ref{lem:fixed_sub_fixed}, \[P=\left(\Per(f)\cap B\right)\setminus C\neq\emptyset.\] Note that $P$ is invariant set, since $C$ is invariant.
    
    Let us claim first that $P\cap\Int B=\emptyset$.
    If there was some $z\in P\cap \Int B$ then {if we denote} $\dist_H(\mathcal{O}(z),C)=\delta>0$ {we see that} $\mathcal{O}(z)\cap A_n\neq \emptyset$ for all $n\leq 0$ {which immediately implies } $\dist_H(A_n,C)\geq\delta$ for all $n\leq 0$. {This is a contradiction, so indeed 
 $P\cap B\subset\partial B$.}
 
Let us denote $\eps=\dist_H(B,C)>0$.
\medskip\paragraph{\textbf{Case 1:} $C\not\subset \Int B$}
We can assume {(symmetric case has the same proof)} that $B=[b_1,b_2]$, $C=[c_1,c_2]$, $b_1<b_2$, $c_1\leq c_2$ and $b_2\in C$. {Since $B\setminus C\neq \emptyset$, this implies that $b_1<c_1$.}
By Lemma~\ref{lem:nofix}, either $f(w)\in[b_1,w)$ for all $w\in( b_1,c_1)$ or $f(w)\in( w,b_2]$ for all $w\in( b_1,c_1)$.
The first of these two cases would mean that, for all $w\in( b_1,c_1)$, {if} $w\in A_n$ for some $n\leq -1$ 
{then} $[b_1,w)\cap A_{m}\neq\emptyset$ for all $m\geq n$, contradicting $C\in\alpha(\{A_n\}_{n\leq 0})$.
On the other hand, the remaining case would mean that $[w,b_2]$ is invariant for all $w\in( b_1,c_1)$, since 
\begin{equation}\label{eqn:eq1}
    [w,b_2]=[w,c_1]\cup (C\cap B).
\end{equation}

This, together with $f(B)=B$, implies $f(b_1)=b_1$.
Therefore, there exists $n_0\leq 0$ such that $b_1\not\in A_n$ for every $n\leq n_0$ since otherwise we would have that $b_1\in A_n$ for every $n\leq 0$, contradicting  $C\in\alpha(\{A_n\}_{n\leq 0})$.

{By~\eqref{eqn:eq1} the set $[w,b_2]\cup C$ is also invariant for any $w\in (b_1,c_1)$ and therefore for infinitely many $n$, thus for all, we must have $A_n\setminus (B\cup C)\neq \emptyset$. But $b_1\not\in A_n$ for almost all $n$, therefore $c_2\in \Int A_n$ for all $n<n_0$ (we decrease $n_0$ when necessary).
In particular $b_2\in \Int A_n$ for all $n\leq n_0$. }

Let $n_1<n_2<n_3\leq n_0$ such that $\dist_H(A_{n_i},C)<\eps/2$ for $i=1,3$ and $\dist_H(A_{n_2},B)<\eps/2$. 
Then $L=\overline{A_{n_1}\setminus B}$ is a subarc of $A_{n_1}$ with endpoint $b_2$.
By the choice of $n_i$, $0\leq i\leq 3$, we know that $b_1\notin f(L)$. 
But now $f(L)\supset L$ since otherwise $f^2(A_{n_1})\subset f(A_{n_1})$ by~\eqref{eqn:eq1} and consequently $A_{n_3}\subset A_{n_2}$, which contradicts the choice of $n_i$, $i=2,3$.
We proved $f(L)\supset L$. But the property of $A_{n_1}\cap B$ being invariant means that $\dist(b_1,f^{n_2-n_1}(L))<\eps/2$.
Hence $\dist(b_1,f^{n_3-n_1}(L))<\eps/2$ because $f^{n_3-n_1}(L)\supset f^{n_2-n_1}(L)$, implying $\dist_H(A_{n_3},C)>\eps/2$. A contradiction.

\medskip\paragraph{\textbf{Case 2:} $C\subset \Int B$}   
{Since 
 $B\in\alpha(\{A_n\}_{n\leq 0})$
and $C\subset \Int B$, we have that $C\subset A_n$ for infinitely many $n\leq 0$
 But $f(C)=C$ and hence $C\subset A_n$ for all $n\leq 0$.}

There are some $n_1<n_2<0$ such that $\dist_H(A_{n_1},C)<\eps/3$ and $\dist_H(A_{n_2},B)<\eps/3$.
Denote $D=A_{n_1}$ and $r=n_2-n_1>0$.
Now $D\subset \Int B$ and $D\subset f^r(D)$.
This implies that, for all $0\leq i< r$, $f^i(D)\subset f^r(f^i(D))$ and hence, for all $k\geq 0$, $f^k(D)\supset f^{i_k}(D)$ for some $0\leq i_k<r$.
Moreover, since $C\subset D$, $A_{n_2}=f^r(D)$ and $f(C)=C$, we have that $\dist_H(C, f^i(D))>0$ for all $0\leq i<r$, in particular
\[\delta=\min\{\dist_H(f^i(D),C)\colon 0\leq i<r\}>0.\]

Finally we conclude that $C\subset f^n(D)$ and $\dist_H(C, f^n(D))\geq \delta$ for all $n\geq 0$.
But $D\subset \Int B$, so $D\subset A_n$ for infinitely many $n\geq 0$, which implies that $\dist_H(C, A_n)\geq \delta$ for all $n\geq 0$, which is in contradiction with $C\in\alpha(\{A_n\}_{n\leq 0})$.

 Indeed, we have just shown that such $C$ cannot exist in either of the two cases, and this proves the lemma.
\end{proof}

\begin{lemma}\label{lem:(non)deg1}
    Let $A\in\mathcal{C}(I)$ and let $\{A_{n}\}_{n\leq 0}$ be its backward branch. If $\alpha(\{A_{n}\}_{n\leq 0})$ contains a nondegenerate $\tilde{f}$-periodic subinterval $B$ and $\{y\}\in\alpha(\{A_{n}\}_{n\leq 0})$ for some $y\in I$, then $\Per(f)\cap \Int B=\emptyset$.
\end{lemma}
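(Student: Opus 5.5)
The plan is to argue by contradiction, using the same trick as in the proof of Lemma~\ref{lem:periodic_alpha}. Suppose there is $z\in\Per(f)\cap\Int B$. Let $p\geq 1$ be a common period, with $f^p(B)=B$ and $f^p(z)=z$. Put $\eps=\min\{\dist(z,\partial B)\}>0$, so that $[z-\eps,z+\eps]\subset B$. Since $B\in\alpha(\{A_n\}_{n\leq 0})$, every $A_n$ with $\dist_H(A_n,B)<\eps$ is an interval whose endpoints lie within $\eps$ of the endpoints of $B$. Hence $z\in A_n$ for infinitely many $n\leq 0$.

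The next step propagates this forward. If $z\in A_n$, then $f^k(z)\in A_{n+k}$ for all $0\leq k\leq -n$. Because $z\in A_n$ for arbitrarily negative $n$, we get $A_m\cap\mathcal{O}(z)\neq\emptyset$ for every $m\leq 0$. Now use $\{y\}\in\alpha(\{A_n\}_{n\leq 0})$: there are $m_k\to-\infty$ with $A_{m_k}\to\{y\}$, so $\diam A_{m_k}\to 0$ and $A_{m_k}\to \{y\}$. Since $\mathcal{O}(z)$ is a finite closed set and each $A_{m_k}$ meets it, $y\in\mathcal{O}(z)$.

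This alone is not a contradiction, so more is needed. Refine the first observation: for $n$ with $\dist_H(A_n,B)$ small, $A_n$ contains the whole interval $J=[z-\eps/2,z+\eps/2]$. Then $A_{n+k}\supset f^k(J)$ for $0\le k\le -n$. Here I would use that $z$ is periodic and interior. Set $g=f^p$ and $D_k=g^k(J)$, each a continuum containing $z$. Since $A_{m_k}$ shrinks to a point of $\mathcal{O}(z)$, we would need $\diam f^{j}(J)$ to become small at infinitely many times $j$. Passing to a branch of $f^p$ as in Lemma~\ref{lem:periodic_alpha}, we may assume $p=1$, $f(z)=z$, $f(B)=B$, and that $y=z$ (replace $\{y\}$ by the appropriate iterate, which also lies in the $\alpha$-limit set). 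So we have infinitely many pairs $n_1<n_2$ with $A_{n_1}\supset J$, $\dist_H(A_{n_1},B)$ small, $\diam A_{n_2}$ tiny, and $z\in A_{n_2}$. Then $f^{n_2-n_1}(J)$ is a tiny neighbourhood-like continuum around $z$. Interleaving a third time $n_3>n_2$ with $A_{n_3}$ close to $B$, we get $A_{n_3}=f^{n_3-n_2}(A_{n_2})$.

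The main obstacle is turning this into a contradiction. My first attempt is to compare the forward images of the small interval $A_{n_2}\ni z$ with those of $J$, using the dichotomy of Theorem~\ref{thm:JO2} applied to $J$. Since $J$ contains a fixed point, $J$ is not wandering, so $J$ is asymptotically periodic, with $\omega_{\tilde f}(J)$ a periodic orbit $\{F_0,\dots,F_{q-1}\}$ of continua containing $z$. Along the times $n_2-n_1$, the sets $f^{n_2-n_1}(J)\subset A_{n_2}$ have diameter tending to $0$, so some $F_i=\{z\}$. Then all $F_i$ are degenerate, because $f$ maps $F_i$ onto $F_{i+1}$ and the period is finite, so every $F_i$ equals $\{z\}$. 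Thus $J$ is asymptotically degenerate, and $f^k(J)\to\{z\}$. But $A_{n_3}\supset f^{n_3-n_1}(J)$ with $n_3-n_1$ large only gives a lower inclusion, so I would instead pick $n_1$ with $A_{n_1}$ close to $\{z\}$: there $A_{n_1}\subset J$ for large index, hence $A_{n_3}=f^{n_3-n_1}(A_{n_1})\subset f^{n_3-n_1}(J)$, which is close to $\{z\}$. This contradicts $A_{n_3}$ being close to the nondegenerate $B$, and shows $\Per(f)\cap\Int B=\emptyset$.
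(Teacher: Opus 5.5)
Your proof is correct, but after the second paragraph it takes a different route from the paper. The paper stops exactly where you write ``this alone is not a contradiction''. Once $y\in\mathcal{O}(z)$, the singleton $\{y\}$ is an $\tilde f$-periodic element of $\alpha(\{A_n\}_{n\le 0})$. Lemma~\ref{lem:periodic_alpha} then forces $\{y\}\in\mathcal{O}_{\tilde f}(B)$, which is impossible because every iterate of the nondegenerate periodic continuum $B$ is nondegenerate. So the paper's proof is your first two paragraphs plus this one-line citation.

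You instead reprove the needed special case by hand:
\begin{enumerate}
\item[(1)] You reduce to $f(z)=z$, $f(B)=B$ and $\{z\}\in\alpha$.
\item[(2)] You use Theorem~\ref{thm:JO2} to see that the neighbourhood $J$ of $z$ is asymptotically periodic.
\item[(3)] The tiny sets $A_{n_2}\supset f^{n_2-n_1}(J)$ force $\omega_{\tilde f}(J)=\{\{z\}\}$.
\item[(4)] Choosing $A_{n_1}\subset J$ with $n_3-n_1$ large then makes $A_{n_3}$ small, contradicting $B\in\alpha$.
\end{enumerate}
This is valid. The reduction in (1) deserves one more sentence, though. After passing to $g=f^p$, take the residue-class branch $\{A'_n\}$ whose $\alpha_{\tilde g}$-limit set contains $B$. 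Shifting $\{y\}$ by a suitable iterate shows this branch also has a singleton $\{w\}$ in its limit set, with $w\in\mathcal{O}_f(z)$. Since $z\in\Int B$ is $g$-fixed, $z\in A'_n$ for all $n$, so $w=z$ automatically. Note also that this is a backward branch of some $A_r$ rather than of $A$, which is harmless here.

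As for the trade-off: the paper's ending is two lines, but it rests on Lemma~\ref{lem:periodic_alpha}, whose proof is a delicate case analysis via Lemma~\ref{lem:fixed_sub_fixed}. Your ending bypasses that lemma. It gives a direct dynamical reason for the contradiction: a neighbourhood of an interior fixed point whose images become arbitrarily small must collapse onto that point, so it cannot re-expand to $B$. The cost is relying on the external dichotomy of Theorem~\ref{thm:JO2} and the residue-class reduction.
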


\begin{proof}
    Suppose on the contrary, that there is some $z\in \Per(f)\cap \Int B$.
    From $B\in \alpha(\{A_{n}\}_{n\leq 0})$, it follows that $A_n\cap\mathcal{O}_f(z)\neq\emptyset$ for all $n\leq 0$.
    Therefore, $y\in \mathcal{O}(z)$ but then $\{y\}$ is periodic, contradicting Lemma~\ref{lem:periodic_alpha}.
\end{proof}

\begin{lemma}\label{lem:nondeg}
    Let $A\in\mathcal{C}(I)$ and let $\{A_{n}\}_{n\leq 0}$ be its backward branch. If $\alpha(\{A_{n}\}_{n\leq 0})$ contains a nondegenerate $\tilde{f}$-periodic subinterval $B$ then all the elements of $\alpha(\{A_{n}\}_{n\leq 0})$ are nondegenerate.
\end{lemma}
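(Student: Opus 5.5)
We argue by contradiction. Suppose $\{y\}\in\alpha(\{A_n\}_{n\leq 0})$ for some $y\in I$, while $B$ is a nondegenerate $\tilde f$-periodic element of the same $\alpha$-limit set. Since $s\alpha(\tilde f)=s\alpha(\tilde f^p)$ and the remainder argument from the proof of Lemma~\ref{lem:periodic_alpha} lets us pass to a power, we may assume, as there, that $f(B)=B$. The singleton $\{y\}$ may then be replaced by some $\{f^r(y)\}$, which is harmless because the $\alpha$-limit set is strongly $\tilde f$-invariant.

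By Lemma~\ref{lem:(non)deg1}, $\Per(f)\cap\Int B=\emptyset$. Since $f(B)=B$, the map $f|_B$ has a fixed point, which must therefore be an endpoint of $B$. Write $B=[b_1,b_2]$, so that the fixed points of $f$ in $B$ lie in $\{b_1,b_2\}$. Applying Lemma~\ref{lem:nofix} to $(b_1,b_2)$ for $f$ (and also to $f^2$), $f-\mathrm{id}$ has constant sign on $\Int B$.
\begin{enumerate}[(i)]
\item If only one endpoint, say $b_1$, is fixed, then $f(x)<x$ on $\Int B$ (forced by $f(b_2)\le b_2$ and $f(b_2)\neq b_2$). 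Hence every point of $B$ converges to $b_1$. Lemma~\ref{lem:unique_fixed} then gives $B=\{b_1\}$, contradicting nondegeneracy.
\item If both endpoints are fixed and, for instance, $f(x)>x$ on $\Int B$, then the same monotone dynamics make every trajectory in $\Int B$ converge to $b_2$.
\item If $f$ swaps the endpoints, we pass to $f^2$ and are back in case (i) or (ii).
\end{enumerate}
So the essential situation is case (ii): $f(b_1)=b_1$, $f(b_2)=b_2$, and $f(x)>x$ on $(b_1,b_2)$.

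Now use the backward branch. Since $B\in\alpha(\{A_n\}_{n\leq 0})$ is nondegenerate and fixed, Lemma~\ref{lem:intersects_orbit} gives $A_n\cap B\neq\emptyset$ for all $n$. Because $\{y\}$ is also a limit point, $y\in B$. Pick $n_1<n_2<n_3$ with $A_{n_1}$ and $A_{n_3}$ very close to $\{y\}$, and $A_{n_2}$ close to $B$. The plan splits according to where $y$ lies.
\begin{itemize}
\item If $y\in\Int B$, then $A_{n_1}$ is a small interval inside $(b_1,b_2)$. Since $f(x)>x$ there, $f^k(A_{n_1})$ stays a small interval (bounded by uniform continuity near $b_2$) whose left endpoint moves monotonically right. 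It can never spread back to cover most of $B$, which contradicts $A_{n_2}$ being close to $B$. Here I expect to use the monotonicity argument as in Case~1 of Lemma~\ref{lem:periodic_alpha}: for $w\in(b_1,b_2)$, the set $[w,b_2]$ is invariant, so once some $A_n\subset[w,b_2]$ we get $A_m\subset[w,b_2]$ for all $m\ge n$, preventing approach to $B$.
\item If $y=b_2$, the invariance of $[w,b_2]$ gives the same contradiction directly.
\item If $y=b_1$, then $A_{n_2}$, which nearly fills $B$, contains points of $(b_1,w)$ for small $w$. The sets $A_{n}$ for $n>n_2$ must return close to $b_1$, while points of $\Int B$ move right. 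Since $b_1$ is fixed, staying near $b_1$ for $A_{n_3}$ requires $A_{n_3}$ to contain a small neighbourhood of $b_1$ that reaches outside $B$. Then either $b_1\in A_n$ for all $n\ge n_1$, which contradicts $A_{n_3}$ being small unless the sets are nested, or one uses the argument from Case~1 of Lemma~\ref{lem:periodic_alpha} with $C=\{b_1\}$.
\end{itemize}

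The main obstacle is this last subcase $y=b_1$, and more generally controlling the part of $A_n$ lying outside $B$. For it I would try first to reuse verbatim the Case~1 argument of Lemma~\ref{lem:periodic_alpha}, with $C=\{y\}$ and its periodicity replaced by $f(b_1)=b_1$. That argument does not really need $C$ to be nondegenerate, only that $C$ is a fixed continuum meeting $B$ at a point.
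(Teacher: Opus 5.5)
Your route differs from the paper's. The paper locates $y$ via Theorem~\ref{thm:maximal} and Blokh's decomposition (periodic point, maximal solenoid, or nonperiodic point of a basic set). It rules out (eventually) periodic $y$ by Lemma~\ref{lem:periodic_alpha}, and in the remaining cases it obtains infinitely many periodic points in $B$, contradicting Lemma~\ref{lem:(non)deg1}. You instead apply Lemma~\ref{lem:(non)deg1} first to force monotone dynamics on $B$. That reduction is sound; case (iii) cannot even occur, since swapped endpoints give a fixed point in $\Int B$ by the intermediate value theorem. Your treatment of $y\in\Int B$ via invariance of $[w,b_2]$ is also correct.

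\textbf{The gap: the endpoint cases, where you have the difficulty reversed.}

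For $y=b_2$, the endpoint attracting $\Int B$, you say invariance of $[w,b_2]$ gives the contradiction directly. But a set $A_n$ close to $\{b_2\}$ need not lie in $[w,b_2]$: it may stick out to the right of $b_2$. The dynamics on $B$ gives no control over that protruding part, which could a priori be carried back near $b_1$ and let a later $A_m$ approach $B$. This is exactly the hard configuration in Case~1 of the proof of Lemma~\ref{lem:periodic_alpha}, and it cannot be dismissed.

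By contrast, $y=b_1$, which you call the main obstacle, is easy. Any $A_n$ with $\dist_H(A_n,B)<(b_2-b_1)/4$ contains $w=(b_1+b_2)/2$. By invariance of $[w,b_2]$, every $A_m$ then meets $[w,b_2]$, so $\{b_1\}$ cannot be a limit.

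\textbf{The fix.} Invoke Lemma~\ref{lem:periodic_alpha} itself rather than re-running its proof. Its statement places no nondegeneracy requirement on the second periodic element, and the paper applies it to singletons, both in Lemma~\ref{lem:(non)deg1} and in this very proof. If $y\in\{b_1,b_2\}$, then $\{y\}$ is a $\tilde f$-fixed element of $\alpha(\{A_n\}_{n\le 0})$ not in $\mathcal{O}_{\tilde f}(B)$, which is a contradiction.

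This even makes your split on the position of $y$ unnecessary. In case (ii), every $y\in B$ has $\omega_f(y)=\{b_1\}$ or $\{b_2\}$. Since the $\alpha$-limit set is closed and $\tilde f$-invariant, it then contains a fixed singleton, and Lemma~\ref{lem:periodic_alpha} finishes.

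Once repaired, your argument is more elementary than the paper's: it needs neither Blokh's decomposition nor Theorem~\ref{thm:maximal}. The paper's route, in exchange, avoids any explicit analysis of $f|_B$.
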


\begin{proof}
    Suppose that there is some $\{y\}\in \alpha(\{A_{n}\}_{n\leq 0})$.
    {As before, for simplicity, we may assume that $B$ is a fixed point of $\tilde{f}$, which is possible, because $\{f^i(y)\}\in \alpha(\{A_{n}\}_{n\leq 0})$, so
    $\alpha_{\tilde{f}^p}(\{A_{pn}\}_{n\leq 0})$ contains a singleton for every $p$.}
    Since $y\in s\alpha(x)$ for every $x\in A$, $y$ is either a periodic point, an element of a maximal solenoid $Q_{\max}$ or a nonperiodic element of a basic set by Theorem~\ref{thm:maximal} and Blokh's Decomposition theorem.

Like before, we easily see that $A_n\cap B\neq\emptyset$ for all $n\leq 0$ so, for every degenerate $\{z\}\in\alpha(\{A_{n}\}_{n\leq 0})$, $z\in B$.   
In particular, $y\in B$.

Note that $y$ cannot be periodic by Lemma~\ref{lem:periodic_alpha}.
Suppose $y\in Q_{\max}$. Note that $\alpha(\{A_{n}\}_{n\leq 0})$ is invariant and $\omega_f(y)=Q_{\min}$, so, for all $z\in Q_{\min}$, $\{z\}\in \alpha(\{A_{n}\}_{n\leq 0})$.
Therefore, $Q_{\min}\subset B$ and $Q_{\min}\subset\overline{\Per(f)}$ so $\Per(f)\cap B$ is infinite, a contradiction with Lemma~\ref{lem:(non)deg1}.

Now suppose $y$ is a nonperiodic point of a basic set. If it is eventually periodic then $f^m(y)$ is a periodic point for some $m\geq 0$ and $\{f^m(y)\}\in\alpha(\{A_{n}\}_{n\leq 0})$, contradicting Lemma~\ref{lem:periodic_alpha}.
Therefore, $y$ has infinite orbit contained in $B$, but $\mathcal{O}_f(y)\subset\overline{\Per(f)}$ so $\Per(f)\cap B$ is infinite, again contradicting Lemma~\ref{lem:(non)deg1}.
Indeed all the elements of $\alpha\left(\{A_n\}_{n\leq 0}\right)$ are nondegenerate.
The proof is complete.
    \end{proof}

\begin{lemma}\label{lem:per_orbit}
    Let $\{A_n\}_{n\leq 0}$ be a backward branch of $A=A_0\in\mathcal{C}(I)$ and suppose that there is some $\tilde{f}$-periodic and nondegenerate $B\in \alpha(\{A_n\}_{n\leq 0})$.
Then $\alpha(\{A_n\}_{n\leq 0})=\mathcal{O}_{\tilde{f}}(B)$
\end{lemma}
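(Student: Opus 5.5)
The inclusion $\mathcal{O}_{\tilde{f}}(B)\subset\alpha(\{A_n\}_{n\leq 0})$ is immediate, since $\alpha$-limit sets of backward branches are strongly invariant. For the reverse inclusion I take an arbitrary $C\in\alpha(\{A_n\}_{n\leq 0})$ and show that $C$ is $\tilde{f}$-periodic. Lemma~\ref{lem:periodic_alpha} then forces $C\in\mathcal{O}_{\tilde{f}}(B)$.

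By Lemma~\ref{lem:nondeg}, $C$ is nondegenerate. By Theorem~\ref{thm:JO2}, $C$ is either asymptotically periodic or wandering. In the asymptotically periodic case, Lemma~\ref{lem:asympt_per} says that $C$, being a nondegenerate, asymptotically periodic element of $s\alpha(\tilde{f})$, is in fact periodic, and we are done. So the proof reduces to excluding the case that $C$ is wandering.

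Suppose $C$ is nondegenerate and wandering. Then Lemma~\ref{lem:wandering} applies to the branch $\{A_n\}_{n\leq 0}$. It gives a basic set $\omega=B(K,f)$ with associated intervals $\mathcal{K}=\{K_{(i)}\}$ such that every $A_n$ lies in some $K_{(i_n)}$. Moreover $C$ is asymptotically degenerate: its iterates are pairwise disjoint, so their lengths tend to $0$. Hence the closed invariant set $\alpha(\{A_n\}_{n\leq 0})$ contains every element of $\omega_{\tilde{f}}(C)$, which consists of singletons. This contradicts Lemma~\ref{lem:nondeg}, which says all elements are nondegenerate.

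The main obstacle I expect is the justification that a wandering continuum has diameters tending to zero. This holds because the images $f^n(C)$ are pairwise disjoint subintervals of $I$, so their lengths are summable. In particular $\omega_{\tilde{f}}(C)$ is a nonempty set of degenerate continua. That is all the contradiction needs, so the appeal to Lemma~\ref{lem:wandering} can even be skipped. I would double-check that $\omega_{\tilde{f}}(C)\subset\alpha(\{A_n\}_{n\leq 0})$: this follows because the $\alpha$-limit set is closed and forward invariant and contains $C$.
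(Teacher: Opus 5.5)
Your proof is correct and follows essentially the same route as the paper. Both arguments get nondegeneracy of every element from Lemma~\ref{lem:nondeg} and the periodic/wandering dichotomy from Theorem~\ref{thm:JO2} with Lemma~\ref{lem:asympt_per}. Both rule out a wandering $C$ because $\omega_{\tilde{f}}(C)$ consists of singletons lying in the closed invariant set $\alpha(\{A_n\}_{n\leq 0})$, and both finish with Lemma~\ref{lem:periodic_alpha}.

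Your summable-lengths argument for why wandering intervals are asymptotically degenerate fills in a step the paper leaves implicit. You are also right that the appeal to Lemma~\ref{lem:wandering} is unnecessary.
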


\begin{proof}
    By Lemma~\ref{lem:nondeg}, $\alpha(\{A_n\}_{n\leq 0})$ consists of nondegenerate subintervals. 
    By Lemma~\ref{lem:asympt_per}, every $C\in\alpha(\{A_n\}_{n\leq 0})$ is either $\tilde{f}$-periodic or wandering.

    If there is some wandering $C\in \alpha(\{A_n\}_{n\leq 0})$, then there exists some $y\in I$ such that $\{y\}\in\omega_{\tilde{f}}(C)$.
    But $\alpha(\{A_n\}_{n\leq 0})$ is closed and invariant, so $\{y\}\in\alpha(\{A_n\}_{n\leq 0})$, contradicting Lemma~\ref{lem:nondeg}.
    Therefore, $\alpha(\{A_n\}_{n\leq 0})$ consists only of the $\tilde{f}$-periodic subintervals so, by Lemma~\ref{lem:periodic_alpha}, it is a periodic orbit.
     \end{proof}

Now we are ready to prove Theorem~\ref{thm:alpha_char}, which offers full description of the $\alpha$-limit sets of backward branches in the induced system by encapsulating all the previously obtained results.

\begin{proof}[Proof of Theorem~\ref{thm:alpha_char}]
 By Theorem~\ref{thm:JO2}, every element of $\alpha\left(\{A_n\}_{n\leq 0}\right)$ is either asymptotically periodic or wandering.
    First suppose that there is some nondegenerate and asymptotically periodic element of $\alpha\left(\{A_n\}_{n\leq 0}\right)$. Then Lemmas~\ref{lem:asympt_per} and~\ref{lem:per_orbit} finish the proof{, because (a) holds}.
    For the second case, suppose that there is a nondegenerate and wandering element of $\alpha\left(\{A_n\}_{n\leq 0}\right)$.
{Then  Theorem~\ref{thm:JO2} combined with Lemma~\ref{lem:per_orbit} 
shows that all elements of
$\alpha\left(\{A_n\}_{n\leq 0}\right)$ are either degenerate or wandering.
    Now Lemma~\ref{lem:wandering} 
    finish the proof, showing that the case (c) holds.}
    The remaining case is when $\alpha\left(\{A_n\}_{n\leq 0}\right)$ consists of singletons.
    Take any $x_0\in A$ and its backward branch $\{x_n\}_{n\leq 0}$ such that $x_n\in {A_n}$ for all $n\leq 0$. 
    Denote $\alpha=\alpha\left(\{x_n\}_{n\leq 0}\right)$.
    Now, $\{y\}\in\alpha\left(\{A_n\}_{n\leq 0}\right)$ implies $y\in \alpha$ and vice-versa, i.e. (b) holds.
\end{proof}
    
\section{Special \texorpdfstring{$\alpha$}{alpha}-limit sets}\label{sec:7}

In this section, we study special $\alpha$-limit sets in the induced system $(\mathcal{C}(I), \tilde{f})$.
We begin with  examples that help build intuition about the $\alpha$-limit sets of different backward branches of a nondegenerate continuum.
We then prove Theorem~\ref{thm:pos_ent}, which provides a sufficient condition for the positivity of the topological entropy of the base system.
Finally, we show that $s\alpha(A)$ is closed whenever $A \in \mathcal{C}(I)$ is nondegenerate.

\begin{example}
    Let $f\colon[-3,3]\to[-3,3]$, \[
    f(x)=\begin{cases}
        2x+3, & x\leq -5/2,\\
        -2, & -5/2<x<-3/2,\\
        2x+1 & -3/2\leq x\leq-1,\\
        x, & -1<x<1,\\
        2x-1 & 1\leq x\leq3/2,\\
        2, & 3/2<x<5/2,\\
        2x-3, & x\geq 5/2.\\
    \end{cases}
    \]
Note that $\htop(f)=0$ since every periodic point is a fixed point.    
We also have \[s\alpha\left([-2,2]\right)=\{[-2,2],[-1,1],[-3,3],[-1,2],[-2,1],[-3,2],[-2,3],[-1,3],[-3,1]\}.\]
\end{example}

\begin{figure}[H]
\centering
\includegraphics[scale=0.6]{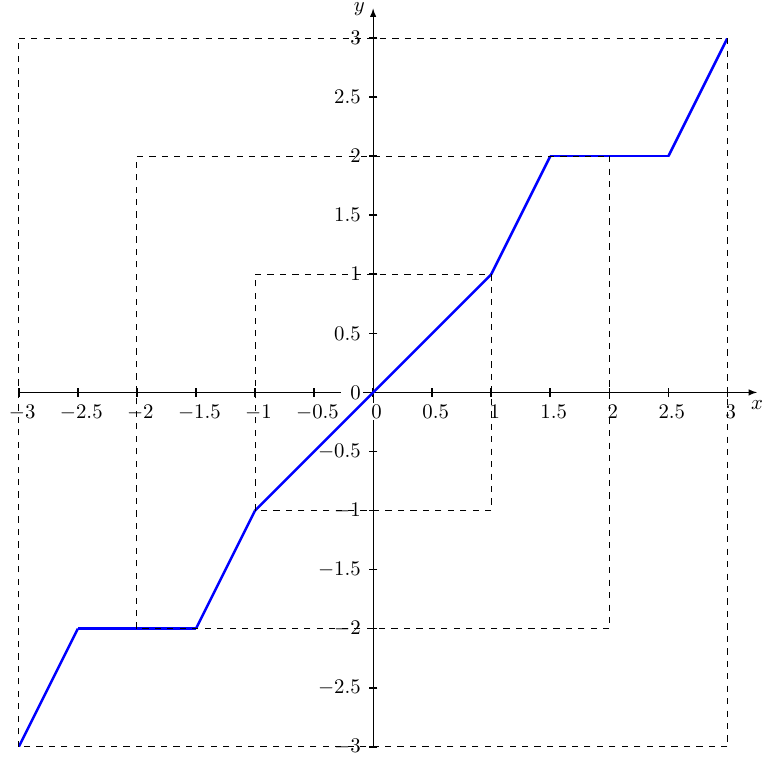}
\caption{}\label{fig:ex3}
\end{figure}

Note that in the above example, any two invariant subcontinua belonging to the same special $\alpha$-limit sets intersect with nonempty interior. Next example shows that it is not necessarily the case.

\begin{example}
    Let $f\colon[-1,1]\to[-1,1]$, \[
    f(x)=\begin{cases}
        2x+1, & x\leq -1/2,\\
        0, & -1/2<x<1/2,\\
        2x-1, & x\geq 1/2.\\
    \end{cases}
    \]
Note that $\htop(f)=0$ since $\Per(f)=\{-1,0,1\}$.    
Then \[s\alpha\left(\{0\}\right)=\{\{0\},\{-1\},\{1\},[-1,0],[0,1],[-1,1]\},\]
i.e. we have disjoint subcontinua lying in $s\alpha\left(\{0\}\right)$.
\end{example}

\begin{figure}[H]
\centering
\includegraphics[scale=1.2]{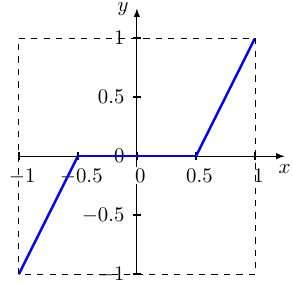}
\caption{}\label{fig:ex5}
\end{figure}

In previous examples any two nondegenerate elements of $s\alpha(A)$ were intersecting. 
At the same time, entropy of the maps was zero.
We will show it was not accidental. But first we need the following lemma.

\begin{lemma}\label{lem:positive_htop}
    Let $f\colon I\to I$ be an interval map, $A\in\mathcal{C}(I)$ and let there be nondegenerate $B,C\in\Fix(\tilde{f})\cap s\alpha(A)$, $B\cap C=\emptyset$ and let $d=\dist(B,C)>0$. Let $\{A_n\}_{n\leq 0}$ be a backward branch of $A$ such that $\{B\}= \alpha\left(\{A_n\}_{n\leq 0}\right)$. 
    Then there is some $t>0$ and an arc $L\subset A_{-2t}\cap N(B,d/2)$ such that $f^2(L)\supset L$ and $A\supset f^{2t}(L)\supset A\setminus N(B,d/2)$.
\end{lemma}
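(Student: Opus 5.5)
The plan is to compare $A_n$ with $B$ one side at a time. Write $B=[b_1,b_2]$. The first goal is to show that $A$ really leaves $N(B,d/2)$, and that far back in the branch each $A_n$ is $B$ plus two short pieces, one on each side.

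For the first point, apply Lemma~\ref{lem:intersects_orbit} to a branch of $A$ whose $\alpha$-limit set contains $C$. Since $C$ is nondegenerate and fixed, this gives $A\cap C\neq\emptyset$. Hence $A\setminus N(B,d/2)\neq\emptyset$, and in particular $A\neq B$.

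For the second point, use $\alpha(\{A_n\}_{n\leq 0})=\{B\}$, so $A_n\to B$ as $n\to-\infty$. As in the proof of Lemma~\ref{lem:intersects_orbit}, once $\rho_H(A_n,B)<\diam B/2$ we get $A_n\cap B\neq\emptyset$. So for all $n\leq n_0$ we have $A_n\subset N(B,d/2)$ and $A_n\cap B\neq\emptyset$. We may also assume that no $A_n$ is contained in $B$. Otherwise, since $f(B)=B$, every later $A_m$ would lie in $B$, giving $A\subset B$, a contradiction.

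Now decompose. For $n\leq n_0$ write $A_n\cup B=B\cup L_n^-\cup L_n^+$, where $L_n^-=[e_n,b_1]$ and $L_n^+=[b_2,g_n]$ are the (possibly degenerate) excess arcs, with $A_n=[e_n,g_n]$.

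The key covering observation is as follows. Since $f(B)=B$ and each $f(L_n^\pm)$ is an arc containing the point $f(b_{1,2})\in B$, every nondegenerate excess arc of $A_{n+1}$ is covered by $f$ of a single excess arc of $A_n$. That is, $f(L_n^{s})\supset L_{n+1}^{s'}$ for some side $s=\sigma_n(s')$. I would use this relation with $f^2$ in place of $f$, which is why the lemma is stated with even times. Composing along the branch, each excess arc of $A=A_0$ is $f^{2t}$-covered by an excess arc of $A_{-2t}$. The set $A\setminus N(B,d/2)$ is contained in the union of the excess arcs of $A_0$.

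To obtain $f^2(L)\supset L$, I use that the lengths $|L_n^\pm|\to 0$ as $n\to-\infty$ while they stay bounded away from zero at $n=0$. This lets me pick a ``record'' index $-2t\leq n_0$. Along the $f^2$-chain of sides, the arc chosen at time $-2t$ is contained in the arc it covers at time $-2t+2$. Both arcs share the endpoint of $B$ on the same side, so length comparison gives inclusion, and this yields $f^2(L)\supset L$.

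The main obstacle is consistency of sides. The side-assignment $\sigma$ may swap sides, and $A\setminus N(B,d/2)$ may lie on both sides of $B$. In either case a single one-sided arc $L$ might not $f^{2t}$-cover everything. To handle this I would allow $L$ to be $B$ together with the needed excess arcs, i.e.\ a subarc of $A_{-2t}$. This is still inside $A_{-2t}\cap N(B,d/2)$, and $f^2(B)=B\subset f^2(L)$, so connectedness of images gives $f^{2t}(L)\supset$ both far pieces.

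The $f^2$-invariance $f^2(L)\supset L$ then needs the record-index choice to be made simultaneously for both sides. Passing to $f^2$ makes the induced side map, when it is a swap, act trivially. It remains to treat the case where one side feeds both, so that $\sigma$ is not injective. In that case I would simply take $L$ to be the feeding side together with $B$.
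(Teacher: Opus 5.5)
There is a genuine gap in the two-sided case; the one-sided part of your plan is essentially sound, modulo the second point below. Suppose $A\setminus N(B,d/2)$ has pieces on both sides of $B$, and the two backward chains of excess arcs end, deep in the branch, on different sides. Then $L$ must contain both $L^-_{-2t}$ and $L^+_{-2t}$, so $L=A_{-2t}$. In that case $f^2(L)=A_{-2t+2}\supset L$ holds exactly when $|L^-_{-2t}|\le|L^-_{-2t+2}|$ and $|L^+_{-2t}|\le|L^+_{-2t+2}|$ for the \emph{same} $t$.

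Your only tool for choosing $t$ is that the lengths tend to $0$. That gives infinitely many record indices for each side separately, but nothing makes them coincide: two positive sequences tending to $0$ can take turns decreasing forever. You note that the choice ``needs to be made simultaneously for both sides'', but you give no reason it can be made.

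The missing ingredient is dynamical, and it is what the paper uses instead of records. Fix $\eps=\min\{d/2,\diam B\}$, take $\delta$ from uniform continuity of $f,f^2$, and take any $k$ with $A_{-k}\subset N(B,\delta)$. Let $D=[e,b_1]$ be an excess arc with $b_1\in f(D)$. Then $f(D)\cap(-\infty,b_1]$ and $D$ are intervals with common right endpoint $b_1$, so either $f(D)\supset D$ or $f(D)\subset D\cup B$. The latter makes $D\cup B$ forward invariant inside $N(B,\eps)$, which is impossible if $D$ feeds a far piece of $A$. When $f(D)$ lands on the $b_2$ side, run the same argument with $f^2$ and the invariant set $D\cup f(D)\cup B$. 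This gives self-covering at every sufficiently deep time and on both sides at once, so no record selection is needed. Equivalently: deep in the branch each one-sided length evolves by a nondecreasing map, so it is automatically nondecreasing in $n$.

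There are two smaller points. First, you define $\sigma_n$ depending on $n$, but then treat $\sigma$ as a fixed map when you say that passing to $f^2$ kills swaps. This needs an argument. Since $\diam f(L^\pm_n)\to0$, and $f(L^s_n)$ contains $f(b_s)$ together with points beyond $b_{s'}$, for $n$ deep enough $\sigma_n(s')=s$ forces $f(b_s)=b_{s'}$. Hence $\sigma_n$ is eventually determined by $f(b_1),f(b_2)$, and only then do the even-step chains stay on one side. Second, ``$B$ together with the needed excess arcs'' is a subarc of $A_{-2t}$ only when both excess arcs of $A_{-2t}$ are nondegenerate. Otherwise $A_{-2t}$ need not contain $B$, for example in your ``one side feeds both'' case. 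There you should take just the feeding excess arc, whose $f^{2t}$-image already covers both far pieces.
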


\begin{proof}
Note that, since $B$ is nondegenerate and invariant, we have that $A_k$ intersects $B$ for every $k\leq 0$, and, in particular $A=A_0$ intersects $B$.
By the same argument, $A$ also intersects $C$. {This in particular implies that $A_n\setminus B\neq \emptyset$ for any $n$.}

Denote $\eps=\min\{d/2,\diam B\}$. By the uniform continuity of $f$, there is ${\eps>}\delta>0$, such that $\dist(x,y)<\delta$ implies $\dist(f^i(x),f^i(y))<\eps$ for $i=1,2$. In particular $f^i(\N(B,\delta))\subset \N(B,\eps)$.
Take $k>0$ such that $A_{-k}\subset N(B,\delta)$. Since $B$ is invariant and $A\cap C\neq\emptyset$, there is an arc $D$, a component of $\overline{A_{-k}\setminus B}$ such that $f^k(D)\cap C\neq\emptyset$.
Note that $D$ and $B$ share a mutual endpoint. Denote $B=[b_1,b_2]$ and assume $b_1\in D$. By the choice of $\delta$, arcs $f(D)$ and $f^2(D)$ also contain exactly one endpoint of $B$, not necessarily at their boundaries.
We distinguish two cases, depending on whether $b_1\in f(D)$ or $b_2\in f(D)$:
\medskip\paragraph{\textbf{Case 1:} $b_1\in f(D)$.} Then $f(D)\supset D$, because otherwise, by the choice of $\delta$, $f(D)\subset D\cup B$ and therefore $f^k(D)\subset D\cup B \subset \N(B,\eps)$, a contradiction.

If there is another component $E$ of $\overline{A_{-k}\setminus B}$, i.e. the one containing $b_2$, such that $f^j(E)\setminus N(B,\eps)\neq\emptyset$ for some $j>0$ then we have two possibilities. Firstly, if $b_2\in f(E)$ then $f(E)\supset E$ by the similar arguments as above and we put $L=D\cup B\cup E=A_{-k}$. Secondly, if $b_1\in f(E)$ then, since $D\cap f(E)\neq\emptyset$ and by the choice of $\delta$, {either $b_1\in f^2(E)$ or $f^2(E)\subset B$}, {with the latter option excluded by the condition $f^j(E)\setminus N(B,\eps)\neq\emptyset$ for some $j>0$.}
This implies that $A_{-k+1}\setminus B\subset f(D)\cup f(E)$, $\overline{A_{-k+1}\setminus B}$  is an arc and $f(\overline{A_{-k+1}\setminus B})\supset \overline{A_{-k+1}\setminus B}$ {(by argument analogous to $f(D)\supset D$)}. In this case we put $L=\overline{A_{-k+1}\setminus B}$ and if there was not such arc $E$ we put $L=D$.

\medskip\paragraph{\textbf{Case 2:} $b_2\in f(D)$.} If $b_2\in f^2(D)$ then from the Case 1. we have that $f(F)\supset F$, where $F$ is a component of $\overline{A_{-k+1}\setminus B}$ containing $b_2$. Even more, $F=\overline{A_{-k+1}\setminus B}$ since $b_1\in D$ while $b_2\in f(D)\cap f^2(D)$, impliying that $f(E)\cap E\neq\emptyset$, where $E$ is a component of $\overline{A_{-k}\setminus B}$ containing $b_2$ and we can put $L=F$.
To complete the proof of this case,  it remains to consider when $b_2\in f(D)$ and $b_1\in f^2(D)$. By the same arguments as earlier, we have that $f^2(D)\supset D$,  because otherwise $f^{2}(D)\subset D\cup B$ but then $f^{k}(D)\subset D\cup f(D)\cup B$, a contradiction.
If there was another component $E$ of $\overline{A_{-k}\setminus B}$, i.e. the one containing $b_2$, such that $f^j(E)\setminus N(B,\eps)\neq\emptyset$ for some $j>0$ then necessarily $f^2(E)\supset E$ (observe that $b_2\in f^2(E)$ by construction) and we put $L=D\cup B\cup E$ and we fix $L=D$ when there is no such arc $E$.

In all the possible cases, there is some $j<0$ and $L\subset A_j$ such that $f^2(L)\supset L$ and $\cup_{n\geq 0}f^n(A_j\setminus L)\subset N(B,\eps)$. To prove the lemma, it just requires to ensure that $j$ is an even number. 
Obviously, if $j$ is odd, it is enough to replace it with $j+1$, while replacing $L$ with $f(L)$.
\end{proof}

\begin{proof}[Proof of Theorem~\ref{thm:pos_ent}]
It is sufficient to show that $f^n$ has a horseshoe for some $n\geq 0$.
Take $p\in\mathbb{N}$ such that $f^p(B)=B$ and $f^p(C)=C$.
Then there are some $0\leq i,j\leq p$ such that $f^i(B),f^j(C)\in s\alpha_{\tilde{f}^p}(A)$.
After replacing $f$ by $f^p$ and $B,C$ by their iterates, without loss of generality may assume that $p=1$ and $i=j=0$.

By Lemma~\ref{lem:per_orbit} there are backward branches $\{B_{k}\}_{k\leq 0}$ and $\{C_{k}\}_{k\leq 0}$ of $A$ such that $\lim\limits_{k\to-\infty}B_k =B$ and $\lim\limits_{k\to-\infty}C_k=C$.
    Let $d=\dist(B,C)>0$.
    By Lemma~\ref{lem:positive_htop}, there are $t_1,t_2>0$ and arcs $L_1\subset B_{-2t_1}\cap N(B,d/2)$, $L_2\subset C_{-2t_2}\cap N(C,d/2)$ such that $f^2(L_i)\supset L_i$ for $i=1,2$ and $A\supset f^{2t_1}(L_1)\supset A\setminus N(B,d/2)$, $A\supset f^{2t_2}(L_2)\supset A\setminus N(C,d/2)$.
By definition $A\setminus N(C,d/2)\supset L_1$ and $A\setminus N(B,d/2)\supset L_2$, yielding $f^{2t_1}(L_1)\supset L_2$ and $f^{2t_2}(L_2)\supset L_1$.
Now, for $t=\max\{t_1,t_2\}$ we have $f^{2t}(L_i)\supset L_1\cup L_2$ for $i=1,2$ and $L_1\cap L_2=\emptyset$, i.e. $f^{2t}$ has a horseshoe. The proof is completed.
\end{proof}

\begin{remark}\label{rem:horseshoe}
Theorem~\ref{thm:pos_ent} states that there are arcs $L_1$ and $L_2$ forming a horseshoe for some iterate of $f$.
    Note that this statement can be made stronger, that is for any given $\zeta>0$, we can ensure that $L_1\subset N(B,\zeta)$ and $L_2\subset N(C,\zeta)$.
    Indeed, in the proof of Lemma~\ref{lem:positive_htop}, $L_1\subset \bigcup_{i=0}^2 f^i(N(B,\delta))$ where $\delta$ can be chosen arbitrarily small (and
    $L_2$ is chosen analogously).
    We will use this fact later, in the proof of Lemma~\ref{lem:disjoint_seq}.
\end{remark}

\begin{example}
    Let $f\colon[-1,2]\to[-1,2]$, \[
    f(x)=\begin{cases}
    x, & x\leq 0,\\
        4x, &0< x< 1/2,\\
        -x+5/2, &1/2\leq x\leq 3/2,\\
        -4x+7, & x>3/2.\\
    \end{cases}
    \]
{Note that this map satisfies the assumptions of} Theorem~\ref{thm:pos_ent}.
Namely, $B=[-1,0]$ and $C=[1,3/2]$ are intervals {invariant for} $f$, both included in $s\alpha\left([-1,2]\right)$.
Indeed, $\{[-1,2^{-2n+1}]\}_{n\geq 0}$ is a backward branch of $[-1,2]$ such that $B=\lim_{n\to\infty}[-1,2^{-2n+1}]$.
Furthermore, $C=\lim_{n\to\infty}[a_{-n},b_{-n}]$ where $[a_{-2n},b_{-2n}]=[-2^{-2n+1}+1,2^{-2n-1}+3/2]$, for $n\geq 0$, in particular $[a_0,b_0]=[-1,2]$.
\end{example}    

\begin{figure}[h]
\centering
\includegraphics[scale=1]{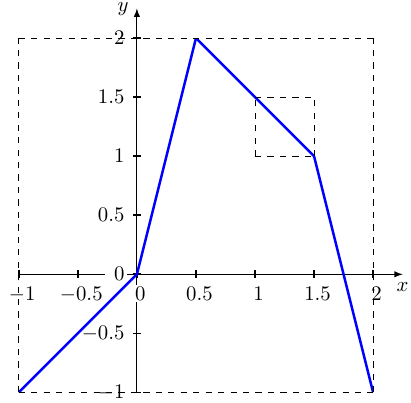}
\caption{}\label{fig:ex7}
\end{figure}

Now we focus on describing the situation in which different elements of the special $\alpha$-limit set of a nondegenerate $A \in \mathcal{C}(I)$ form a nested sequence.
We begin with an example.

\begin{example}
    Let $f\colon[-2,3]\to[-2,3]$, \[
    f(x)=\begin{cases}
        8x+14, & x\leq -3/2,\\
        -6x-7, & -3/2<x<-1,\\
        x & -1\leq x\leq 0,\\
        2x, & 0<x<0.5,\\
        3-4x & 0.5\leq x\leq1,\\
        -1, & 1<x<2,\\
        4x-9, & x\geq 2.\\
    \end{cases}
    \]

There is an uncountable family of fixed subintervals in a single special $\alpha$-limit set. Namely, $[a,0]\in s\alpha\left([-1,1]\right)$, for every $a\in[-1,0)$ (Figure~\ref{fig:ex8}).
\end{example}

\begin{figure}[H]
\centering
\includegraphics[scale=0.75]{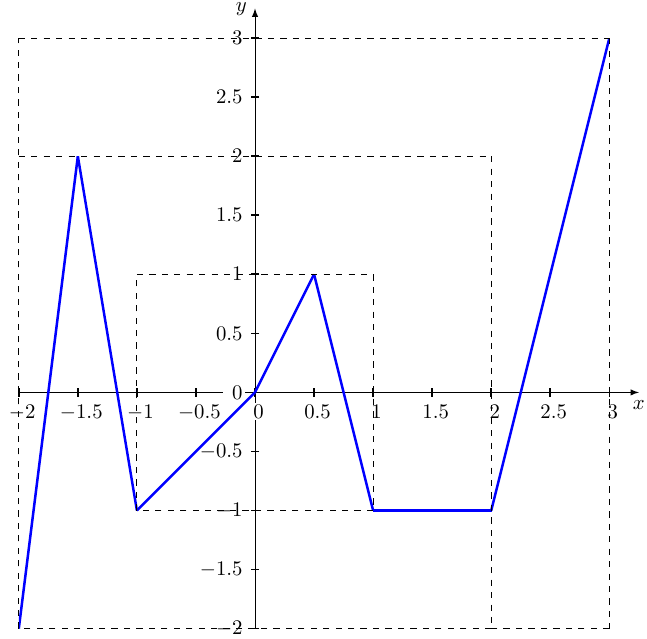}
\caption{}\label{fig:ex8}
\end{figure}
Note that in the above example all subintervals of the uncountable set \linebreak $\{[a,0],a \in[-1,0)\}$ share the common endpoint $0$.
On the other hand, we also have $[-1,1],[-2,2],[-2,3]\in s\alpha\left([-1,1]\right)$.
This situation is, in a sense, maximal, as shown by the following proposition.

\begin{proposition}\label{prop:nested}
     Let $f\colon I\to I$ be an interval map, $A\in\mathcal{C}(I)$ and let $\mathcal{B}$ be a family of nested subintervals such that $\mathcal{B}\subset s\alpha(A)$. 
     If there are $B_1,B_2,B_3\in\mathcal{B}$ such that $B_1\subset\Int B_2\subset \Int B_3$,  then $B_2=A$ and every $B\in\mathcal{B}$ shares an endpoint with either $B_1$ or $B_3$. Moreover, at most one $B\in\mathcal{B}\setminus \{B_3\}$ shares an endpoint with $B_3$.
\end{proposition}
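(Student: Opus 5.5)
The plan is to first extract a general ``sandwich'' principle from the nesting, and then apply it to various triples from $\mathcal{B}$.

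\textbf{Sandwich principle.} Suppose $D\subset \Int E$ with $D,E\in s\alpha(A)$ and $E$ nondegenerate, and let $\{A_n\}_{n\leq 0}$ be a backward branch of $A$ with $A_{n_k}\to E$.
\begin{itemize}
\item Since $D\subset\Int E$, we get $D\subset A_{n_k}$ for almost all $k$.
\item Pushing forward gives $f^{|n_k|}(D)\subset f^{|n_k|}(A_{n_k})=A$.
\item Symmetrically, along a branch with $A'_{m_k}\to D$ we have $A'_{m_k}\subset E$ for large $k$, hence $A\subset f^{|m_k|}(E)$.
\end{itemize}
Applying this to $B_1\subset \Int B_2$ and $B_2\subset\Int B_3$, we get infinitely many $n$ with $f^n(B_2)\subset A$ and infinitely many $m$ with $A\subset f^m(B_2)$.

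\textbf{Step 1: $B_2$ is $\tilde f$-periodic.} By Theorem~\ref{thm:JO2}, $B_2$ is asymptotically periodic or wandering. It cannot be wandering: iterates $f^m(B_2)\supset A$ and $f^n(B_2)\subset A$ with $m\neq n$ would intersect. Hence $B_2$ is asymptotically periodic and nondegenerate, and Lemma~\ref{lem:asympt_per} gives that $B_2$ is $\tilde f$-periodic, say of period $p$.

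\textbf{Step 2: $B_2=A$.} Passing to $f^p$ as in Lemma~\ref{lem:periodic_alpha}, and selecting subsequences with fixed residues modulo $p$, we obtain $r,r'$ with $f^{r}(B_2)\subset A\subset f^{r'}(B_2)$. I then want to show $r\equiv r'\pmod p$, which yields $A=f^r(B_2)$.
\begin{itemize}
\item I expect this residue matching to be the main obstacle.
\item My first attempt is to use that $B_1\subset\Int B_2$ and $B_2\subset \Int B_3$ are also in $s\alpha(A)$, and compare via Lemma~\ref{lem:intersects_orbit}. Every $A_n$ meets $\mathcal{O}(B_2)$, and the nested relations force $f^r(B_2)\cap f^{r'}(B_2)\supset f^{r}(B_2)\neq\emptyset$. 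Combined with $A\subset f^{r'}(B_2)$ this gives $f^r(B_2)\subset f^{r'}(B_2)$.
\item Iterating this inclusion around the cycle then forces $f^r(B_2)=f^{r'}(B_2)$.
\end{itemize}
Once $A$ is a periodic element of $s\alpha(A)$ lying in the orbit of $B_2$, I show that $A=B_2$ itself. The idea is to apply the sandwich principle once more, now with $B_1\subset \Int A'$ where $A'$ is the relevant iterate, to rule out a proper shift.

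\textbf{Step 3: endpoints.} Take $B\in\mathcal{B}$ sharing no endpoint with $B_1$ or $B_3$. By nestedness, either $B_1\subset\Int B\subset \Int B_3$ (after comparing with $B_1$ and $B_3$), or $B$ lies strictly inside $B_1$, or $B$ strictly contains $B_3$.
\begin{itemize}
\item In the first case, Steps 1--2 applied to the triple $(B_1,B,B_3)$ give $B=A=B_2$.
\item In the other cases, a triple such as $(B,B_1,B_2)$ or $(B_2,B_3,B)$ would force $B_1=A$ or $B_3=A$, contradicting $B_1\subsetneq B_2=A\subsetneq B_3$.
\end{itemize}

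\textbf{Step 4: the ``moreover'' part.} Suppose $B\neq B'$ in $\mathcal{B}\setminus\{B_3\}$ both share an endpoint with $B_3$. Both must contain $B_2$ (otherwise the triple $(B_1,B_2,\cdot)$ gives a contradiction as in Step 3), so we may assume $B_2\subset B\subsetneq B'\subsetneq B_3$.
\begin{itemize}
\item I would run the sandwich argument with the non-strict inclusions, using $B_2=A$: branches converging to $B'$ eventually contain $A$ and are contained in $B_3$.
\item The pushforward relations then give $f^n(A)\subset A$ and $A\subset f^m(A)$ on long stretches.
\item From this I aim to derive, via Lemma~\ref{lem:B_not_sub_Ai} and Lemma~\ref{lem:A_not_sub_BUF}, that $B$ and $B'$ coincide.
\end{itemize}
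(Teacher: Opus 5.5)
Your sandwich principle is the same mechanism the paper uses, and Steps 1--2 correctly show that $B_2$ is $\tilde f$-periodic with $f^r(B_2)\subset A\subset f^{r'}(B_2)$. The cycle argument alone then gives $A=f^r(B_2)$; Lemma~\ref{lem:intersects_orbit} is not needed. The gap is the last move, from $A\in\mathcal{O}_{\tilde f}(B_2)$ to $A=B_2$.

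\textbf{Step 2 cannot reach $A=B_2$ without invariance.} Every sandwich only produces relations $f^n(\cdot)\subset A\subset f^m(\cdot)$, and all of these are consistent with a proper shift $r\not\equiv 0\pmod p$. In fact no argument can rule the shift out without an invariance hypothesis. Take the map $g$ of the example preceding the proposition. There $[-\tfrac12,0]\subset\Int[-1,1]\subset\Int[-2,2]$ are $\tilde g$-fixed and all lie in $s\alpha_g([-1,1])$. Now double $g$: let $f$ translate a copy $J$ of $[-2,3]$ onto a disjoint copy $J'$, act on $J'$ as the transported $g$ back into $J$, and be affine on the gap. Interleaving a $g$-branch of $A=[-1,1]\subset J$ with its translates gives an $f$-branch of $A$. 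Hence the translates of $[-\tfrac12,0]$, $[-1,1]$, $[-2,2]$ form a nested family in $s\alpha_f(A)$ with the required strict interior inclusions. Its middle member is $f(A)\subset J'$, which is disjoint from $A$.

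\textbf{What the paper assumes.} The paper's proof tacitly takes the members of $\mathcal B$ to be $\tilde f$-fixed. It obtains branches converging to $B_1,B_3$ from Lemma~\ref{lem:per_orbit} and uses that $B_2$ is invariant; this is also the setting of Lemma~\ref{lem:no_overlap}, where the proposition is applied. Under that hypothesis your sandwich gives $B_2=f^n(B_2)\subset A\subset f^m(B_2)=B_2$ at once. Steps 1--2 then become unnecessary, and your Step 3 is exactly the paper's triple-swapping argument.

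\textbf{Step 4 has no working mechanism.} Lemmas~\ref{lem:B_not_sub_Ai} and~\ref{lem:A_not_sub_BUF} require $\omega_{\tilde f}(B)=\{F\}$ with $F\setminus B\neq\emptyset$, respectively $B\setminus F\neq\emptyset$. For fixed members of $\mathcal B$ one has $F=B$, so neither lemma applies. Also, the relations $f^n(A)\subset A\subset f^m(A)$ carry no information once $A=B_2$ is invariant.

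The missing idea is a periodic-point argument. If $D\in s\alpha(A)$ and $y\in\Int D$, then $y$ lies in infinitely many terms of a branch accumulating on $D$, so some iterate of $y$ lies in $A=B_2$. Since $B_2$ is invariant, $\Int D\setminus B_2$ contains no periodic point of $f$. Write your sets as $B=[z,c]\subsetneq B'=[z,d]\subsetneq B_3=[z,e]$, all containing $B_2$ and sharing the endpoint $z$. Apply Lemma~\ref{lem:fixed_sub_fixed} to $(B,B_2)$ and then to $(B',B)$; together with the absence of periodic points in $\Int B'\setminus B_2$, this forces $f(z)=z$ and $f(d)=d$. But $d\in\Int B_3$ with $B_3\in s\alpha(A)$ then gives $d\in B_2$, a contradiction.
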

\begin{proof}
    We begin by proving that $B_2=A$.  By Lemma~\ref{lem:per_orbit}, there are backward branches $\{B^i_n\}_{n\leq 0}$, $i=1,3$ of $A$, $\lim_{n\to -\infty}B^i_n=B_i$ for 
    $i=1,3$. 
    We have $B_1\subset\Int B_2\subset \Int B_3$, so $B^1_n\subset B_2$ and $B^3_n\supset B_2$ for all but at most finitely many $n\leq 0$. 
    {But $B_2$ is invariant, therefore }
    $B_2\subset B^3_0=A=B^1_0\subset B_2$ and the claim is proved.
    
    Now suppose there was some $B_4\in\mathcal{B}$ different from $B_i$, $i=1,2,3$, not sharing endpoint with any $B_i$, $i=1,3$.
    Then there are pairwise distinct $i,j,k\in\{1,2,3,4\}$ such that $B_i\subset \Int B_j\subset \Int B_k$ and $j\neq 2$. But then $B_j=A=B_2$, a contradiction.

    Now let us prove the last assertion of the Proposition. Assume that there are some $C,D,E\in\mathcal{B}$, such that $B_2\subset \Int C\subset D\subset E$ and that $C,D,E$ all have mutual endpoint $z$ {(which as we proved earlier is an endpoint of $B_3$)}.
    Let $B_2=[a,b]$.
    Furthermore, without loss of generality, let $C=[z,c]$, $D=[z,d]$ and $E=[z,e]$ for $z<a<b<c<d<e$.
    We will prove that $z$ and $d$ are fixed points of $f$. 
    If there was a periodic point $y\in(z,c]\setminus B_2$ then $\mathcal{O}_f(y)\cap B_2=\emptyset$ since $B_2$ is invariant.
    But $B_2=A$, so $D\in s\alpha(B_2)$ and $y\in\Int D$, hence $\mathcal{O}_f(y)\cap B_2\neq\emptyset$, which is a contradiction.
    Indeed, $\Per(f)\cap (z,c]\setminus B_2=\emptyset$. 
    Now Lemma~\ref{lem:fixed_sub_fixed} implies $f(z)=z$.
    Similarly, $(c,d)\cap\Per(f)=\emptyset$.
    Indeed, if there was some $y\in \Per(f)\cap (c,d)$ then $\mathcal{O}_f(y)\cap C=\emptyset$. But $y\in \Int D$ and $D\in s\alpha(B_2)$, again yielding $\mathcal{O}_f(y)\cap B_2\neq\emptyset$.  
    Now again by Lemma~\ref{lem:fixed_sub_fixed} we get $f(d)=d$.
    But this contradicts the existence of $E$ since $E\in s\alpha(B_2)$ and $d\in\Int E$ imply $d\in B_2$, a contradiction.
    \end{proof}

We end the section with a series of auxiliary results, which will be then used to prove Theorem~\ref{thm:special_closed}, stating that $s\alpha(A)$ is closed whenever $A\in\mathcal{C}(I)$ is nondegenerate.

\begin{lemma}\label{lem:no_overlap}
    Let $f\colon I\to I$ be an interval map and $A\in \mathcal{C}(I)$.
    Then there is no sequence $\{B_n\}_{n\in\mathbb{N}}$, $B_n=[a_n,b_n]$ in $s\alpha(A)\cap\Fix(\tilde{f})$ such that $a_n<a_{n+1}$, $b_n<b_{n+1}$ and $a_n<b_m$ for every $n,m\in\mathbb{N}$. 
\end{lemma}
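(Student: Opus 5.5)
The plan is a contradiction argument built from four consecutive members of the sequence. We find a periodic point whose orbit avoids $B_1$, but which must nevertheless be met by every backward branch of $A$ converging to any $B_j$.

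Suppose such a sequence exists. The hypotheses give $a_n<b_n$, so every $B_n$ is nondegenerate. They also give $a_n<a_{n+1}<b_n<b_{n+1}$, so consecutive intervals overlap without either containing the other.

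\textbf{Step 1: a periodic point between $b_2$ and $b_3$.} Apply Lemma~\ref{lem:fixed_sub_fixed} to the $\tilde f$-fixed continua $B_3$ and $B_2$. Since $B_3\setminus B_2=(b_2,b_3]\neq\emptyset$, there is a periodic point $q\in(b_2,b_3]$.
\begin{itemize}
\item $\mathcal{O}_f(q)\subset B_3$, because $B_3$ is invariant.
\item $\mathcal{O}_f(q)\cap B_2=\emptyset$, because $B_2$ is invariant: if some iterate of $q$ entered $B_2$, all later iterates would stay there, and by periodicity $q$ itself would lie in $B_2$.
\end{itemize}
Hence $\mathcal{O}_f(q)\subset(b_2,b_3]$. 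As $b_1<b_2$, the orbit is disjoint from $B_1$.

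\textbf{Step 2: $A$ meets $\mathcal{O}_f(q)$.} We have $a_4<b_2<q\le b_3<b_4$, so $q\in\Int B_4$. By Lemma~\ref{lem:per_orbit}, there is a backward branch $\{C_k\}_{k\le 0}$ of $A$ with $\alpha(\{C_k\}_{k\le0})=\{B_4\}$, i.e. $C_k\to B_4$. Since $q\in\Int B_4$, we get $q\in C_k$ for all sufficiently negative $k$. Then $f^{-k}(q)\in f^{-k}(C_k)=A$, so $A\cap\mathcal{O}_f(q)\neq\emptyset$.

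\textbf{Step 3: contradiction with $B_1\in s\alpha(A)$.} Again by Lemma~\ref{lem:per_orbit}, take a backward branch $\{D_k\}_{k\le0}$ of $A$ with $D_k\to B_1$. Fix $x\in A\cap\mathcal{O}_f(q)$, say $x=f^s(q)$.
\begin{itemize}
\item For any $k\le 0$, some point of $D_k$ maps onto $x$ under $f^{-k}$.
\item Choose $r\ge0$ with $r\equiv s+k \pmod{\operatorname{per}(q)}$, so that $f^{-k}(f^r(q))=x$. Every point of $\mathcal{O}_f(q)$ has exactly one preimage under $f^{-k}$ inside the orbit, namely $f^r(q)$. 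Because $D_k$ is connected, this does not yet force $D_k$ to meet the orbit directly.
\item Instead, argue as in Lemma~\ref{lem:intersects_orbit}. Since $A$ meets the finite invariant set $\mathcal{O}_f(q)$, it contains a point $f^t(q)$. Pulling back along the branch, the interval $D_k$ covers $A$ under $f^{-k}$.
\end{itemize}

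\textbf{Main obstacle.} The step I expect to be hardest is exactly this: deducing that $D_k$ itself meets $\mathcal{O}_f(q)$. Preimages of orbit points need not lie in the orbit, so this needs care. My first attempt is to exploit the structure of $B_2$ and $B_3$ instead of the bare periodic point.
\begin{itemize}
\item The set $B_2\cup B_3=[a_2,b_3]$ is an invariant interval containing $\mathcal{O}_f(q)$.
\item Since $D_k\to B_1$ and $B_1\not\subset[a_2,b_3]$, consider the component structure of $D_k$ relative to the invariant sets $B_1$, $B_2$, $B_3$ and $B_1\cup B_2$.
\item Try to show that the part of $D_k$ near $B_1$ can only reach $(b_2,b_3]$ by passing through $B_2$, which is invariant. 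The goal is then that $f^{-k}(D_k)=A$ cannot reach $\mathcal{O}_f(q)$ unless $D_k$ already meets $(b_2,b_3]$, contradicting $D_k\to B_1$.
\end{itemize}

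If this does not work directly, I will use the periodic point $p\in[a_1,a_2)$ supplied by Lemma~\ref{lem:fixed_sub_fixed} for the pair $B_1,B_2$ symmetrically. Combining the two gives a horseshoe-type covering in the spirit of Lemma~\ref{lem:positive_htop}, which should contradict convergence of the branch.
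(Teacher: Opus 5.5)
Your Steps 1 and 2 are correct, but they only show that $A\cap\mathcal{O}_f(q)\neq\emptyset$. The contradiction is deferred to Step 3, which you leave as a list of attempts, and the implication you aim for there does not follow from the facts you draw on. Those facts are: $B_1\in s\alpha(A)$, $B_1,B_2,B_3$ are fixed, and $A$ meets $\mathcal{O}_f(q)$. None of this forces $D_k$ to meet $\mathcal{O}_f(q)$, because a branch converging to $B_1$ may protrude to the left of $a_1$, and that protruding piece can expand and cover points beyond $b_2$. Here is an example. On $I=[-2,4]$ let $f$ be the identity on $[0,4]$, $f(x)=2x$ on $[-1/2,0]$, affine on $[-1,-1/2]$ with $f(-1)=4$ and $f(-1/2)=-1$, and $f\equiv 4$ on $[-2,-1]$. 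Then $B_1=[0,2]$, $B_2=[1,3]$ and $B_3=[3/2,7/2]$ are fixed, and $q=13/4$ is fixed. The continuum $A=[-1,4]\ni q$ has the backward branch $[-1,4],[-1,2],[-1/2,2],[-1/4,2],\dots$ converging to $B_1$, and no member other than $A$ meets $(b_2,b_3]$. The horseshoe fallback does not help either: a horseshoe gives positive entropy, not a contradiction. The paper in fact uses such coverings (Remark~\ref{rem:horseshoe}) to \emph{construct} convergent branches in Lemma~\ref{lem:disjoint_seq}.

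The missing ingredient is an upper bound on $A$ coming from a second member of the sequence. In your proposal, $B_2$ and $B_3$ enter only through their invariance. For instance, $B_3\subset\Int[a_2,b_4]$, so a branch converging to $B_3$ eventually lies in the invariant interval $B_2\cup B_4=[a_2,b_4]$, and hence $A\subset[a_2,b_4]$. With this bound, the contradiction comes from $p$, not $q$.
\begin{enumerate}[(1)]
\item Take $p$ periodic in $B_1\setminus B_2=[a_1,a_2)$ (Lemma~\ref{lem:fixed_sub_fixed}); then $\mathcal{O}_f(p)\subset[a_1,a_2)$.
\item $B_1\cup B_2=[a_1,b_2]$ is invariant, and $A$ meets $(b_2,b_3]$ by your Step 2, so no $D_k$ lies in $[a_1,b_2]$.
\item For $k\ll 0$, $D_k$ lies in a small neighbourhood of $B_1$ inside $(-\infty,b_2)$ and reaches beyond $a_2$. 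So it must contain a point left of $a_1$ (impossible if $a_1=\min I$, which already contradicts (2)), hence $[a_1,a_2]\ni p$.
\item Then $A=f^{-k}(D_k)$ meets $\mathcal{O}_f(p)\subset[a_1,a_2)$, contradicting $A\subset[a_2,b_4]$.
\end{enumerate}
This repair uses only four members of the sequence.

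The paper argues differently. Branches converging to $B_n$ and $B_m$ meet at every level, so their levelwise unions form a backward branch of $A$ converging to $B_n\cup B_m$. This produces $B_5\subset\Int(B_4\cup B_6)\subset\Int(B_3\cup B_7)\subset\Int(B_2\cup B_8)$ inside $s\alpha(A)$, which contradicts Proposition~\ref{prop:nested}.
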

\begin{proof}
    Note that $b_1\in \Int B_n$ for every $n>1$, so the elements of the sequence $\{B_n\}_{n\in\mathbb{N}}$ have pairwise nondegenerate intersection.

   First we claim that $B_n\cup B_m\in s\alpha(A)$ for every $1<n<m$.
   To prove this, let $\{A_k^n\}_{k\leq 0}$ and $\{A_k^m\}_{k\leq 0}$ be backward branches of $A$ converging to $B_n$ and $B_m$ respectively.
   Note that $B_1\cap B_{m+1}$ is invariant and $B_1\cap B_{m+1}\subset \Int B_n$ and $B_1\cap B_{m+1}\subset \Int B_m$.
   {Therefore }
   $B_1\cap B_{m+1}\subset A_k^n\cap A_k^m$ for every $k\leq 0$.
   Denote $A_k=A_k^n\cup A_k^m$ for every $k\leq 0$ and observe that $A_k$ is connected.
   Then, for every $k<0$, $f(A_k)=f(A_k^n\cup A_k^m)=f(A_k^m)\cup f(A_k^n)=A_{k+1}^{n}\cup A_{k+1}^m=A_{k+1}$ and $A_0=A_0^n\cup A_0^m=A$.
   In other words, $\{A_k\}_{k\leq 0}$ is a backward branch of $A$ converging to $B_n\cup B_m$ and the claim is proved.

   Now \[\{B_2\cup B_8,\ B_3\cup B_7,\ B_4\cup B_6,\ B_5\}\subset s\alpha(A)\] and \[B_5\subset \Int(B_4\cup B_6)\subset \Int(B_3\cup B_7)\subset \Int(B_2\cup B_8),\] contradicting Proposition~\ref{prop:nested}.
\end{proof}

\begin{lemma}\label{lem:increasing_seq}
    Let $f\colon I\to I$ be an interval map and $A\in \mathcal{C}(I)$.
    Assume that there is a 
    sequence $\{B_n\}_{n\in\mathbb{N}}$, $B_n=[a,b_n]$ in $s\alpha(A)\cap\Fix(\tilde{f})$ such that $a<b_n<b_{n+1}$ for every $n\in\mathbb{N}$ and let $B=\lim_n B_n$.
    Then also $B\in s\alpha(A)\cap\Fix(\tilde{f})$.   
\end{lemma}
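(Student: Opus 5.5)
Write $b=\lim_n b_n$, so that $B=[a,b]$. Since $\Fix(\tilde f)$ is closed in $\mathcal{C}(I)$, we get $f(B)=B$ at once. The real task is to produce a backward branch of $A$ whose $\alpha$-limit set contains $B$. By Lemma~\ref{lem:per_orbit}, each $B_n$ has a backward branch $\{A^n_k\}_{k\leq 0}$ of $A$ with $\alpha(\{A^n_k\}_{k\leq 0})=\{B_n\}$, so $A^n_k\to B_n$ as $k\to-\infty$. I would first extract structural information from the nested family $\{B_n\}$ using Proposition~\ref{prop:nested}. If infinitely many of the $B_n$ satisfied $B_n\subset\Int B_{n+1}$ in a chain of length three, then $B_{n+1}=A$ would hold for two distinct indices, which is impossible. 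Here all $B_n$ share the left endpoint $a$, so the hypothesis $B_1\subset\Int B_2\subset\Int B_3$ is not directly available. Instead I would use the argument from the last part of the proof of Proposition~\ref{prop:nested}: for $n<m$, no periodic point lies in $(b_n,b_m)$, and $b_n$ is a fixed point by Lemma~\ref{lem:fixed_sub_fixed}. That argument needs a fixed interval contained in the interior of the others together with $A$. So I would first show that $A\subset[a,b_1]$, or at least that $A\cap B_n\neq\emptyset$ for all $n$, which holds because each $B_n$ is nondegenerate and invariant. This should give $f(b_n)=b_n$ for all $n$, hence $f(b)=b$ and $f([b_n,b_{n+1}])=[b_n,b_{n+1}]$.

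Next I would glue backward branches. Because $B_n\subset B_{n+1}$ are both invariant and $A^{n+1}_k\to B_{n+1}$, for $k$ large in absolute value the set $A^{n+1}_k$ contains $B_n$ minus a small neighbourhood of $a$. I would try to build a single branch by concatenation. Follow $\{A^{1}_k\}$ down to an index $k_1$ where $A^1_{k_1}$ is $\eps_1$-close to $B_1$. Then find a preimage chain from $A^1_{k_1}$ that approaches $B_2$. The natural way to do this is to use the invariance of the arcs $[b_1,b_2]$ (both endpoints fixed) and to mimic the branch $\{A^2_k\}$ near its right end, as in the union trick of Lemma~\ref{lem:no_overlap}. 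Concretely, I would set $A'_k=A^{n}_k\cup A^{n+1}_k$, which is connected since both contain a common invariant piece near $a$. Then $A'_k$ is again a backward branch of $A$ converging to $B_n\cup B_{n+1}=B_{n+1}$. This already shows the branches can be chosen compatibly. By a diagonal choice $k_1>k_2>\dots$, I would aim for a branch $\{C_k\}$ with $\dist_H(C_{k_n},B_n)<1/n$, so that $B\in\alpha(\{C_k\})$.

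The main obstacle is this concatenation step. A backward branch is determined by going backwards from $A$, so after having reached a set $D_n\approx B_n$ at time $k_n$, I need a preimage chain starting from $D_n$ itself, not from $A$, that reaches near $B_{n+1}$. My plan is to show that $D_n$ can be taken to be exactly $A^{n+1}_{k_n}$ for a suitable common refinement. That is, I would choose the branch of $B_{n+1}$ so that its early segment is close to $B_n$, by working with unions $A^{n}_k\cup A^{n+1}_{k}$ and exploiting that the arcs $A^{n+1}_k$ pass through a long stretch where they are Hausdorff-close to $B_n$. If direct gluing fails, the fallback is to argue by contradiction with Lemma~\ref{lem:no_overlap} and Proposition~\ref{prop:nested}. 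I would construct from the $B_n$ and their unions three strictly nested members of $s\alpha(A)$ not satisfying the proposition's conclusion, which would force the sequence $b_n$ to be eventually constant; this contradicts strict monotonicity unless the gluing works.
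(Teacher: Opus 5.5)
You correctly note that $f(B)=B$ because $\Fix(\tilde f)$ is closed. Beyond that there is a genuine gap. The step that actually produces a backward branch of $A$ accumulating at $B$ is never carried out, and the structural facts you plan to use are false.

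\textbf{The structural claims.} Your claim that no periodic point lies in $(b_n,b_m)$ for $n<m$ contradicts Lemma~\ref{lem:fixed_sub_fixed}, which gives $\Per(f)\cap(b_n,b_{n+1}]\neq\emptyset$ for every $n$. Your derivation of $f(b_n)=b_n$ rests on that claim. The suggested first step $A\subset[a,b_1]$ is also false; in fact $b\in A$, as shown below.

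\textbf{The gluing step.} The union $A^n_k\cup A^{n+1}_k$ converges to $B_n\cup B_{n+1}=B_{n+1}$, which you already had. The diagonal concatenation is exactly the unresolved difficulty. Once you reach some $D_n$ close to $B_n$, you need a chain of preimages \emph{of $D_n$} approaching $B_{n+1}$. But $\{A^{n+1}_k\}$ is a chain of preimages of $A$, and nothing in your plan converts one into the other.

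\textbf{The fallback.} This cannot work either. Take the map in Section~\ref{sec:7} that is the identity on $[-1,0]$ and has $[a,0]\in s\alpha([-1,1])$ for all $a\in[-1,0)$. After reflecting by $x\mapsto -x$, it gives a sequence satisfying every hypothesis of the lemma with strictly increasing $b_n$, and every point of the gaps is fixed. So no argument through Proposition~\ref{prop:nested} can force $b_n$ to be eventually constant. Moreover, all $B_n$ share the endpoint $a$ and $B_n\cup B_m=B_{\max\{n,m\}}$, so that proposition's hypothesis is never triggered.

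\textbf{The missing idea.} Prove $B\subset A$; after that no concatenation is needed.
\begin{enumerate}
\item The nonempty invariant sets $P_n=\Per(f)\cap(b_n,b_{n+1}]$ lie in $[b_n,b_{n+1}]\subset\Int B_{n+2}$. Hence they lie in $A^{n+2}_k$ for all sufficiently negative $k$, and by invariance for all $k\le 0$. In particular $P_n\subset A$ for every $n$, so $b\in A$.
\item If $A^1_k$ is Hausdorff-close to $B_1$ but misses $a$, then $A^1_k\subset B_2$, so $A\subset B_2$, contradicting $b\in A$.
\item Combine this with $f(a)=a$. The paper obtains $f(a)=a$ by similar trapping arguments: an invariant interval such as $[a-\eps,b_2]$ containing some $A^1_k$ would contain $A$, again contradicting $b\in A$. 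This yields $a\in A^1_k$ for all $k\le 0$, hence $B=[a,b]\subset A$.
\item Now set $A_k:=A^1_k\cup B$. It is connected, satisfies $f(A_k)=A_{k+1}$ because $f(B)=B$, has $A_0=A\cup B=A$, and converges to $B_1\cup B=B$.
\end{enumerate}
So a single existing branch, enlarged by the invariant set $B$, already shows $B\in s\alpha(A)$.
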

\begin{proof}
    Denote $B=[a,b]$, where $b=\lim_{n\to\infty}b_n$.
    Let, for each $n\in\mathbb{N}$, $\{A^n_k\}_{k\leq 0}$ be a backward branch of $A$ converging towards $B_n$.
    Since $B_n$ are fixed points of $\tilde{f}$, we obviously have $A^n_k\cap B_n\neq\emptyset$ for every $n\in\mathbb{N}$ and every $k\leq 0$.

    We claim that $b\in A$.
    By Lemma~\ref{lem:fixed_sub_fixed}, $P_n=(b_n,b_{n+1}]\cap\Per(f)\neq\emptyset$, for every $n\in\mathbb{N}.$
    But $B_n$ and $B_{n+1}$ are invariant and therefore $P_n$ is invariant for every $n\in\mathbb{N}$.
    Now, for every $n\in\mathbb{N}$, $P_n\subset A_k^{n+2}$ for all $k\leq 0$ since $B_{n+2}=\lim_{k\to-\infty}A^{n+2}_k$.
    But then $P_n\subset A$ for every $n\in\mathbb{N}$. This proves the claim.

We need to check that $a\neq 0$. Indeed, $a=0$, together with $\lim_{k\to-\infty}A^1_k=B_1$, would imply $A_k^1\subset B_2$ for every $k\leq 0$ and, in particular, $A\subset B_2$. But $b\in A$, a contradiction. Therefore, $a>0$.

Now we claim that there is some $\delta>0$ such that, for every $0<\eps<\delta$, $f([a-\eps,a])\supsetneq [a-\eps,a]$.
Take $\delta>0$ such that  $\dist(x_1,x_2)<\delta$ implies $\dist(f(x_1),f(x_2))<b_2-b_1$.
Now, take any $0<\eps<\delta$ and let us prove that $f([a-\eps,a])\supsetneq [a-\eps,a]$.
Suppose on the contrary, that $f([a-\eps,a])\not\supsetneq [a-\eps,a]$. {This in particular implies that $f(x)\geq a-\eps$ for every $x\in [a-\eps,a]$.} 
Since $B_1$ is invariant, $f([a-\eps,a])$ is an arc intersecting $B_1$.
But now and by the choice of $\delta$, $f([a-\eps,a])\subset [a-\eps,b_2]$.
Therefore, $[a-\eps,b_2]$ is invariant and $A^1_k\subset[a-\eps,b_2]$ for every $k\leq 0$.
In particular, $A\subset [a-\eps,b_2]$. But $b\in A$, a contradiction.
We conclude that $f([a-\eps,a])\supsetneq [a-\eps,a]$ for every $0<\eps<\delta$, i.e. the claim is proved.
Note that this also implies that $f(a)=a$ since $B_1$ is invariant.
{Simply, if it was not the case, then $f(a)>a$ and so for sufficiently large $k$ we have $f(A_k^1)\subset [a,b_2])$ which is a contradiction as explained before.}

Next observe that for all $k\leq 0$ such that $\dist(A_k^1,B_1)<b_2-b_1$ we have $a\in A_k^1$. Otherwise $A_k^1\subset B_2$ and consequently $A\subset B_2$, contradicting $b\in A$. But $f(a)=a$ so $a\in A_k^1$ for every $k\leq 0$ and, in particular, $a\in A$.

We have already proved that $B\subset A$. But $B$  is a limit of a sequence in $\Fix(\tilde{f})$, hence $f(B)=B$.
It remains to show that $B\in s\alpha(A)$.

For every $k\leq 0$ denote $A_k=A_k^1\cup B\in\mathcal{C}(I)$
and observe that:
\[f(A_k)=f(A^1_k\cup B)=f(A_{k}^1)\cup f(B)=A_{k+1}^1\cup B= A_{k+1}\text{\ and}\] 
and \[A_0=A_0^1\cup B=A\cup B=A.\]
Furthermore, \[\lim_{k\to -\infty}A_k=\lim_{k\to -\infty}\left(A^1_k\cup B\right)=B_1\cup B=B.\]
This shows that indeed $B\in s\alpha(A)$ completing the proof.
\end{proof}
The following standard Lemma is a direct consequence of Darboux property.
\begin{lemma}\label{lem:sharkovskii}
    Let $f\colon I\to I$ be a continuous interval map.
    If $J,L$ are closed subintervals of $I$ such that $f(L)\supset J$ then there exists a closed subinterval $K$ of $L$ such that $f(K)=J$.
\end{lemma}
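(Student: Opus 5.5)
The plan is to reduce to the case $J\subset f(L)$ with $L=[l_1,l_2]$ and $J=[j_1,j_2]$, and then to shrink $L$ from both sides using the intermediate value (Darboux) property of $f$. If $J$ is degenerate, $J=\{j\}$, we simply pick any $x\in L$ with $f(x)=j$ and put $K=\{x\}$. So assume $j_1<j_2$.

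\textbf{Step 1.} Choose points $p,q\in L$ with $f(p)=j_1$ and $f(q)=j_2$; they exist because $J\subset f(L)$. Without loss of generality $p<q$; the case $q<p$ is symmetric, with the roles of $j_1$ and $j_2$ interchanged.

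\textbf{Step 2.} Next shrink the interval $[p,q]$ as follows. Let
\[
q'=\min\{x\in[p,q]\colon f(x)=j_2\},
\]
which exists by closedness of the preimage. Then let
\[
p'=\max\{x\in[p,q']\colon f(x)=j_1\}.
\]
Put $K=[p',q']$.

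\textbf{Step 3.} It remains to verify $f(K)=J$. By the choice of $p'$ and $q'$, no point of $(p',q')$ is mapped to $j_1$ or to $j_2$. Since $f(p')=j_1$, $f(q')=j_2$ and $f((p',q'))$ is connected, this forces $f((p',q'))\subset(j_1,j_2)$. Therefore $f(K)\subset J$. The reverse inclusion $f(K)\supset J$ follows from the Darboux property applied on $K$, because $f$ takes both values $j_1$ and $j_2$ there.

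There is no real obstacle in this argument. The only point needing care is the order of the extremal choices in Step 2: $q'$ must be taken first and $p'$ chosen inside $[p,q']$, so that $(p',q')$ avoids both $f^{-1}(j_1)$ and $f^{-1}(j_2)$.
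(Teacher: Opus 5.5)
Your proof is correct and is exactly the Darboux-property argument the paper invokes without writing out; the paper only remarks that the lemma is a direct consequence of the Darboux property. Choosing $q'$ first and then $p'\in[p,q']$ makes $f((p',q'))$ a connected set that misses $j_1,j_2$ while having both in its closure, which gives $f([p',q'])=[j_1,j_2]$ as you claim.
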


\begin{lemma}\label{lem:decreasing}
    Let $f\colon I\to I$ be an interval map and $A\in \mathcal{C}(I)$.
    Assume there is a converging sequence $\{B_n\}_{n\in\mathbb{N}}$, $B_n=[a,b_n]$ in $s\alpha(A)\cap\Fix(\tilde{f})$ such that $a<b_{n+1}<b_n$ for every $n\in\mathbb{N}$ and let $B=\lim_n B_n$.
    Then also $B\in s\alpha(A)$.   
\end{lemma}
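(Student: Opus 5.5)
We have $B=[a,b]$ with $b=\lim b_n$, $b>a$ or possibly $b=a$. Since each $B_n$ is $f$-invariant and $\Fix(\tilde f)$ is closed, $f(B)=B$. The plan is to build a backward branch of $A$ converging to $B$ by a diagonal procedure. For each $n$ fix a backward branch $\{A^n_k\}_{k\le 0}$ of $A$ with $A^n_k\to B_n$ as $k\to-\infty$. By Lemma~\ref{lem:fixed_sub_fixed}, applied to $B_n$ and $B_{n+1}$, there is a periodic point $p_n\in(b_{n+1},b_n]$, and its orbit avoids $B_{n+1}$ because $B_{n+1}$ is invariant. Since $B_{n-1}\in s\alpha(A)$ is nondegenerate, the branch $A^{n-1}_k$ eventually contains $p_n$ in its interior. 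Hence $\mathcal{O}(p_n)$ meets every $A^{n-1}_k$, and consequently $A^{n-1}_k\not\subset B_{n+1}$ for all $k$. Similarly, $a\in A^n_k$ for all large $|k|$, and so for all $k$ (using $f(a)\in B_n$).

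The key step is to glue branches: I want a backward branch that follows $A^{n}$ for a long time and then switches to a branch close to $B_{n+1}$. Given $n$, choose $k_n$ very negative with $\dist_H(A^n_{k_n},B_n)$ small. Using Lemma~\ref{lem:sharkovskii}, I would like to find, inside a preimage, a subcontinuum mapping exactly onto $A^n_{k_n}$ whose own backward history stays close to $B_{n+1}$. Concretely, I will take the element $A^{n+1}_{j}$ for very negative $j$, which is close to $B_{n+1}\subset B_n$. Because the $B_{n+1}$-branch eventually returns to $A$ and covers $A\supset A^n_{k_n}$-type sets, I pull back along it: $f^{-j}(A^{n+1}_j)=A$. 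Then I trace $A^n$ backwards from $A$ to $A^n_{k_n}$, and by Lemma~\ref{lem:sharkovskii} find a subinterval of $A^{n+1}_j$ realizing any required image. This direction is wrong, however, because we need the concatenated sequence to end at $A$. The correct order is to start from $A$ and go back along $A^{1}$ until the set is close to $B_1$, then continue back along a branch near $B_2$, and so on. For this I need: if $D$ is close to $B_n$ and contains $B_{n+1}$ (or at least meets it suitably), then $D$ has backward branches whose elements approach $B_{n+1}$. I would obtain this by showing $D\supset A^{n+1}_{m}$ for some $m$ with a well-defined relation $f^{|m|}$, or more simply by mimicking the union trick of Lemma~\ref{lem:increasing_seq}. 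Since $B_{n+1}\subset B_n$ and both are invariant, the sequence $A^{n+1}_k\cup B_{n}$ is a backward branch of $A\cup B_n$, and one should check that $A\supset B_n$ (this holds because $A$ contains $p_m$ for all $m$, and $a\in A$).

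With $B_n\subset A$ for all $n$, I take $A_k:=A^{N}_k\cup B$ for a suitable $N$, or better the diagonal form $A_k=A^{n}_k\cap B_{n-1}$ patched appropriately. The cleanest candidate is the following: since $B_{n}\subset A$, the sets $B_n$ form a nested family of invariant intervals, and $A^n_k\cup B_{n'}$ is a backward branch of $A$ for any $n'$. Then I use a diagonal sequence of pieces $f$-mapping $B_{n}$ onto itself: on $B_n$, $f|_{B_n}$ maps onto $B_n$, and I choose subintervals via Lemma~\ref{lem:sharkovskii} to create a branch inside $B_1$ of $B_1$ passing near $B_2, B_3,\dots$ successively. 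Each transition from near $B_n$ to near $B_{n+1}$ uses the branch $A^{n+1}$ restricted to $B_n$ (intersect with $B_n$; by invariance, $f(A^{n+1}_k\cap B_n)$ should be checked to be $A^{n+1}_{k+1}\cap B_n$, which is the main obstacle). Finally, I prepend the branch $A^1$ from $A$ down to $B_1$ via $A^1_k\cup B_1$.

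The hardest point is the transition step: producing, from a set Hausdorff-close to $B_n$, a finite backward chain ending close to $B_{n+1}$. I would first try the fact that $A^{n+1}_k\to B_{n+1}$, $A^{n+1}_0=A\supset B_n$, and Lemma~\ref{lem:sharkovskii} to select subcontinua $K_k\subset A^{n+1}_k$ with $f^{|k|}(K_k)=B_n$ and $K_k$ close to $B_{n+1}$ for large $|k|$. These chains are concatenated, and since $\dist_H(B_n,B)\to0$, the resulting branch has $B$ in its $\alpha$-limit set.
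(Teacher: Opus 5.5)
There is a genuine gap. The step you yourself call the hardest is never carried out: getting from a set that is only Hausdorff-close to $B_n$ to a backward chain ending near $B_{n+1}$. None of the constructions you try achieves it.

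\begin{itemize}
\item The branch $A^N_k\cup B$ converges to $B_N\cup B=B_N$, not to $B$. The union trick of Lemma~\ref{lem:increasing_seq} works there only because the limit is the \emph{largest} set.
\item The sets $A^{n+1}_k\cap B_n$ need not form a backward branch, since points of $A^{n+1}_k$ outside $B_n$ can be mapped into $B_n$. Such points must exist: $A\not\subset B_4$ forces branch elements close to $B_m$ ($m\ge 5$) to stick out to the left of $a$.
\item The selection $K_k\subset A^{n+1}_k$ with $f^{|k|}(K_k)=B_n$ via Lemma~\ref{lem:sharkovskii} gives no Hausdorff control; such a $K_k$ may well be a short interval to the left of $a$. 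More seriously, the stages cannot be concatenated. A stage starts at $K_k\neq B_{n+1}$, whereas the next stage only produces preimage chains of $B_{n+1}$ or of $A$, not of $K_k$.
\end{itemize}

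Any diagonal scheme of this kind needs preimages of arbitrary sets near $B_{n+1}$, and nothing in your argument provides them.

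A smaller point: $a\in A^n_k$ for all $k$ does not follow from $f(a)\in B_n$. That only gives $f^m(a)\in A^n_{k+m}$. You need $f(a)=a$, which requires proof when $b>a$.

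The missing idea is a local analysis at the common endpoint $a$, which makes gluing unnecessary.
\begin{itemize}
\item Your periodic-point observation gives $A\cap(b_3,b_2]\neq\emptyset$, so $A\not\subset B_4$. Hence for $|j|$ large, $A^5_j=[c,d]$ with $c<a$.
\item The paper deduces from this that $f(a)=a$ and that $f([a-\eps,a])\supsetneq[a-\eps,a]$ for all small $\eps>0$. Otherwise $[a-\eps,b_4]$ would be an invariant set containing $A$, contradicting $b_3\in A$.
\item Set $b_{(0)}=c$ and let $b_{(n+1)}$ be the largest preimage of $b_{(n)}$ in $[b_{(n)},a]$. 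This gives $b_{(n)}\uparrow a$ with $f([b_{(n+1)},b])=[b_{(n)},b]$ when $b>a$, while $f^{|j|}([c,a])=A$.
\item Thus $f^{k-j}([c,a])\cup B$ for $j\le k\le 0$, followed by $[b_{(j-k)},a]\cup B$ for $k<j$, is a single backward branch of $A$ converging to $B$.
\item When $B=\{a\}$, one instead pulls $[c,a]$ back through intervals $J_n\subset[b_{(n)},a]$ using Lemma~\ref{lem:sharkovskii}.
\end{itemize}
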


\begin{proof}
{Denote $B=[a,b]$, where $b=\lim_{n\to\infty}b_n$. It may happen that $a=b$.}
    Let, for each $n\in\mathbb{N}$, $\{A^n_k\}_{k\leq 0}$ be a backward branch of $A$ converging towards $B_n$.
    Since $B_n$ are fixed by $\tilde{f}$, we obviously have $A^n_k\cap B_n\neq\emptyset$ for every $n\in\mathbb{N}$ and every $k\leq 0$.
    In particular, $A\cap B_n\neq\emptyset,$ for every $n\in\mathbb{N}$.

    Just like in the proof of Lemma~\ref{lem:increasing_seq}, $P=(b_3,b_2]\cap\Per(f)$ is a nonempty and strongly invariant set.
    But $B_1=\lim_{k\to-\infty} A_k^1$ and therefore $P\subset A_k^1$ for every $k\leq 0$ and, in particular, $b_3\in A$.

    Again, analogously to the proof of Lemma~\ref{lem:increasing_seq}, we first see that $a\neq 0$ and then claim that there is some $\delta>0$ such that, for every $0<\eps<\delta$, $f([a-\eps,a])\supsetneq [a-\eps,a]$.
If $B$ is nondegenerate, take $\delta>0$ such that  $\dist(x_1,x_2)<\delta$ implies $\dist(f(x_1),f(x_2))<\min\{b_4-b_5,1/2(b-a)\}$.
Otherwise, i.e. if $B=\{a\}$, we take $\delta>0$ such that  $\dist(x_1,x_2)<\delta$ implies $\dist(f(x_1),f(x_2))<b_4-b_5$.
Now, take any $\eps$, $0<\eps<\delta$ and let us prove that $f([a-\eps,a])\supsetneq [a-\eps,a]$.
Suppose on the contrary, that $f([a-\eps,a])\not\supsetneq [a-\eps,a]$, {which as before implies that $f(x)\geq a-\eps$ for every $x\in [a-\eps,a]$}. 
Since $B_5$ is invariant, $f([a-\eps,a])$ is an arc intersecting $B_5$.
But then by the choice of $\delta$, $f([a-\eps,a])\subset [a-\eps,b_4]$.
Therefore, $[a-\eps,b_4]$ is invariant and $A^5_k\subset[a-\eps,b_4]$ for every $k\leq 0$.
In particular, $A\subset [a-\eps,b_4]$. But $b_3\in A$, a contradiction.
We conclude that $f([a-\eps,a])\supsetneq [a-\eps,a]$ for every $0<\eps<\delta$, i.e. the claim is proved.
{Since $B_5$ is invariant, we can repeat argument form the proof of Lemma~\ref{lem:increasing_seq}, obtaining $f(a)=a$}.

We claim that for every $0<\eps<\delta$, there is some $z\in (a-\eps,a)$, such that $f([z,b])\supset [a-\eps,b]$ and $f([z,b])\cap[0,a-\eps)=\emptyset$.
Take any $\eps$, $0<\eps<\delta$ and note that $f([a-\eps,a])\supsetneq [a-\eps,a]$ implies that $f^{-1}(\{a-\eps\})\cap(a-\eps,a]\neq\emptyset$. Furthermore, $f^{-1}(\{a-\eps\})$ is a closed subset of $I$ so there is $z=\max \left(f^{-1}(\{a-\eps\})\cap[a-\eps,a]\right)$, $a-\eps<z<a$.
Note that $f([z,b])$ is a compact interval such that $[a-\eps,b]\subset f([z,b])$. 
By the choice of $\delta$, $f([z,b])\cap(b,1]=\emptyset$ if $a\neq b$.
Moreover, if $f([z,b])\cap[0,a-\eps)\neq\emptyset$, then $f^{-1}(\{a-\eps\})\cap(z,
{a}]\neq\emptyset$ 
contradicting the choice of $z$. The claim is proved. For the sake of convenience, for $\eps$ given, we will denote such $z$ by $Z(a-\eps)$. 

For any $
b_{(0)}\in(a-\delta,a)$ we can inductively define a sequence $\{b_{(n)}\}_{n\geq 0}$ as $b_{(n+1)}=Z(b_{(n)})$ for every $n\geq 0$.
From construction it follows that a sequence defined this way is strictly increasing and $a$ is its upper bound.
Also, we can prove that $\lim_ n b_{(n)}=a$. Suppose on the contrary, that $\lim_{n\to\infty}b_{(n)}=\Bar{a}<a$.
By definition $f([\Bar{a},a])\supsetneq [\Bar{a},a]$ so there is some $y\in (\Bar{a},a)$ and some $k\geq 0$ such that $f(y)=b_{(k)}$, which is a contradiction since $y>b_{(k+1)}$. 
Indeed, $\lim_{n\to\infty}b_{(n)}=a$.

Since $B_5=\lim_{k\to-\infty}A^5_k$, there is some $j<0$ such that $A^5_j\subset (a-\delta,b_4]$. 
Denote $A^5_j=[c,d]$. Remember that necessarily $c<a$.
But $B_4$ is invariant and $b_3\in A$ and $f(a)=a$, therefore $f^{-j}([c,a])=A$.
Take $b_{(0)}\coloneqq c$ and construct a sequence $\{b_{(n)}\}_{n\geq 0}$ like {described} above so that $f([b_{(n+1)},a])\supset[b_{n},a]$ and $b_{(n)}<b_{(n+1)}<a$ for every $n\geq 0$ and $\lim_n b_{(n)}=a$.

Now we are ready to finally construct a backward branch of $A$ converging towards $B$.
We divide the proof in two cases.
Let us first consider the case when $B=\{a\}$.
Now we inductively define a sequence of arcs. First set $J_0=[c,a]$ and next, if for some $n\geq 0$ the set $J_{n}\subset [b_{(n)},a]$ is already defined, we define $J_{n+1}$ to be any arc contained in $[b_{(n+1)},a]$ such that $f(J_{n+1})=J_n$.
Such an arc exists by Lemma~\ref{lem:sharkovskii} since $f([b_{(n+1)},a])\supset[b_{(n)},a]$.
Since $\lim_{n}b_{(n+1)}=a$, it follows that $\lim_n J_n=\{a\}$.
Now set $A_k=f^{k-j}([c,a])$ for all $k$ such that $j\leq k\leq 0$ and recall that by the definition of $[c,d]$ we have $A_0=A$.
For $k<j$ put $A_k=J_{j-k}.$
Therefore, $\{A_k\}_{k\leq 0}$ is a backward branch of $A$ converging to $B=\{a\}$, proving the Lemma in the  case of degenerate $B$.

Now suppose that $B$ is nondegenerate.
Let us first check that $B\subset A$. We have already proved that $b_3\in A$.
Suppose $a\notin A$. Since $f(a)=a$, we have $a\notin A^5_k$ for every $k\leq 0$.
But then $A^5_k\subset B_4$ for every $k\leq 0$ and, in particular, $A\subset B_4$. But $b_3\in A$ ,a contradiction.
Therefore, $B\subset[a,b_3]\subset A$.

For all $j\leq k\leq 0$
denote $A_k\coloneqq (f^{k-j}([c,a])\cup B)\in\mathcal{C}(I)$ .
For $k<j$ we define $A_k=[b_{(j-k)},a]\cup B$.
First of all, note that $A_0=A\cup B=A$.
{By the definition of $\delta$
we see that $f(b_{(k+1)},a)\subset f([a-\eps,a])\subset [0,b]$ and by the definition of $b_{(k+1)}$ we have $f(b_{(k+1)},a)\subset [b_{(k)},1]$.
This shows that $f(A_k)=A_{k+1}$ for every $k<0$.}
Finally, observe that $\lim_{k\to -\infty}A_k=\{a\}\cup B=B$.
Indeed, in both cases we have $B\in s\alpha(A)$ and the Lemma is proved.
\end{proof}

\begin{lemma}\label{lem:sequences}
    Let $\{B_n\}_{n>0}$ be a converging sequence of nondegenerate elements of $\mathcal{C}(I)$. Then either there exists a subsequence whose all elements intersect {(all of them contain at least one common point)} or there is a subsequence with pairwise disjoint elements. 
\end{lemma}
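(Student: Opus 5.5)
The argument is purely about compact intervals in $I$ and uses no dynamics. Write $B_n=[a_n,b_n]$ with $a_n<b_n$, and let $B=[a,b]=\lim_n B_n$, so that $a_n\to a$ and $b_n\to b$.

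\emph{Case $a<b$.} For all large $n$, $\dist_H(B_n,B)<(b-a)/2$. Hence $a_n<(a+b)/2<b_n$, and the midpoint $(a+b)/2$ lies in every such $B_n$. This gives a subsequence with a common point.

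\emph{Case $a=b$, i.e.\ $B_n\to\{a\}$.} Split $\mathbb{N}$ into three sets:
\begin{itemize}
\item $N_0=\{n\colon a\in B_n\}$;
\item $N_-=\{n\colon b_n<a\}$;
\item $N_+=\{n\colon a_n>a\}$.
\end{itemize}
One of them is infinite.
\begin{itemize}
\item If $N_0$ is infinite, the point $a$ is common to the subsequence indexed by $N_0$.
\item If $N_-$ is infinite, build a subsequence inductively. Having chosen $n_1<\dots<n_k$ in $N_-$, note that $\max_{i\le k} b_{n_i}<a$ and $a_n\to a$. So we may pick $n_{k+1}\in N_-$ with $n_{k+1}>n_k$ and $a_{n_{k+1}}>\max_{i\le k} b_{n_i}$. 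Then $B_{n_{k+1}}$ lies strictly to the right of every earlier $B_{n_i}$, and the chosen elements are pairwise disjoint.
\item If $N_+$ is infinite, argue symmetrically. Choose $b_{n_{k+1}}<\min_{i\le k} a_{n_i}$, which is possible since $b_n\to a$ and $\min_{i\le k} a_{n_i}>a$.
\end{itemize}

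The only point needing care is the degenerate-limit case. There, strict inequality (e.g.\ $b_n<a$ on $N_-$) is what keeps the gap between the current subsequence and the limit positive, so that convergence of the endpoints lets each new interval fit inside that gap. Nondegeneracy of the $B_n$ is not really needed beyond writing them as intervals. I expect no further obstacle.
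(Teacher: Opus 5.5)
Your proof is correct and follows essentially the same route as the paper. The paper uses the midpoint of a nondegenerate limit to get a common point; when $B=\{b\}$, it either finds infinitely many $B_n$ containing $b$, or restricts to intervals on one side of $b$ and inductively extracts pairwise disjoint ones. Your running bound $\max_{i\le k} b_{n_i}$ (respectively $\min_{i\le k} a_{n_i}$) replaces the paper's passage to monotone endpoint sequences and treats both sides explicitly.
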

\begin{proof}
    If $B=\lim_n B_n$ is nondegenerate then there exists a subsequence of $\{B_n\}_{n>0}$ whose every element contains a midpoint of $B$, so there is nothing to prove.
    
    Next suppose $B=\{b\}$.
    If there are infinitely many elements of $\{B_n\}_{n>0}$ containing $b$ then we are done.
    Now suppose that $b\in B_n$ for finitely many $n>0$ and denote $B_n=[a_n,b_n]$.
    {Going to a subsequence when necessary, we may assume 
that both sequences $a_n,b_n$
    }
    are strictly monotone and either $a_n<b_n<b$ for all $n>0$
    or $b<a_n<b_n$ for all $n>0$.
    It is clear that $\diam B_{n}\to 0$ and $\lim a_{n}=
    \lim b_{n}=b$
 therefore,  for every $k>0$, $b_k\in B_n$ for at most finitely many $n$. Denote $n_1=1$ and then inductively put $n_{k+1}=\min \{j\in\mathbb{N}\colon j>n_k,\ b_{n_k}\notin B_j\}$. 
{By the construction intervals $\{B_{n_k}\}_{k>0}$ are pairwise disjoint.}
    \end{proof}

The proof of the following Lemma can be found in~\cite[Corollary 4.3]{Jelic2}.
\begin{lemma}
    Let $T$ be a tree and denote $m=\lcm\{1,2,\ldots,|\End(T)|\}$. Let $f\colon T\to T$ be a tree map and let $P_1$ and $P_2$ be two intersecting periodic points of $\left(\mathcal{C}(T),\tilde{f}\right)$ of periods $p_1$ and $p_2$, respectively.
    If $p_1>mp_2$ then $P_1\subset P_2$.
\end{lemma}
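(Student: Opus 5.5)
The plan is to reduce to the case where $P_2$ is fixed, encode how the orbit of $P_1$ sticks out of $P_2$ by a permutation on at most $|\End(T)|$ symbols, and conclude that $P_1$ has period at most $mp_2$ unless $P_1\subset P_2$.

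\textbf{Reduction.} Put $g=f^{p_2}$, so that $g(P_2)=P_2$. Under $\tilde g$ the continuum $P_1$ is still periodic, with period
\[
k=\frac{p_1}{\gcd(p_1,p_2)}\geq \frac{p_1}{p_2}>m .
\]
Hence it suffices to prove the following claim: if a $\tilde g$-periodic continuum $P_1$ meets a $\tilde g$-fixed continuum $P_2$ and $P_1\not\subset P_2$, then the $\tilde g$-period of $P_1$ divides some $r\leq m$. We argue by contradiction and assume $P_1\not\subset P_2$.

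\textbf{Connected unions.} Since $P_1\cap P_2\neq\emptyset$ and $g(P_2)=P_2$, we get $g^i(P_1)\cap P_2\neq\emptyset$ for every $i$. Thus each $Q_i=g^i(P_1)\cup P_2$ is a subtree of $T$ and $g(Q_i)=Q_{i+1}$.

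\textbf{Branches at $P_2$.} The components of $T\setminus P_2$ are finitely many, and each contains an endpoint of $T$. So their number $N$ satisfies $N\leq|\End(T)|$. Let $S_i$ be the set of such components that $g^i(P_1)$ meets. Since $P_1\not\subset P_2$, every $S_i$ is nonempty.

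\textbf{Permutation of branches.} We want to show that $g$ induces a map $\sigma$ on the branches in $\bigcup_i S_i$ with $\sigma(S_i)=S_{i+1}$, and that $\sigma$ is a bijection because the sequence $S_i$ is $k$-periodic. Then $\sigma$ has order $r$ dividing $\lcm\{1,\dots,N\}$, which divides $m$, and $S_{i+r}=S_i$ for all $i$.

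\textbf{Rigidity and conclusion.} Put $h=g^r$. It remains to show that two $\tilde h$-periodic continua in the same $\tilde h$-orbit, containing points of $P_2$ and leaving $P_2$ through exactly the same branches, must coincide. The plan is to compare $P_1$ and $h(P_1)$ inside each branch, in the spirit of Lemma~\ref{lem:fixed_sub_fixed} and of the nested-interval arguments in Lemma~\ref{lem:periodic_alpha}. Inside a single branch, viewed as an arc, the parts of $P_1$ and $h(P_1)$ are nested. If this nesting were strict in some branch, applying $h$ repeatedly would give a strictly monotone chain $P_1\subsetneq h(P_1)\subsetneq\cdots$ in that branch, contradicting $h^{k}(P_1)=P_1$. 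Hence $h(P_1)=P_1$, so $k$ divides $r\leq m$, contradicting $k>m$. Therefore $P_1\subset P_2$.

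\textbf{Main obstacle.} The difficult step is showing that the branch action $\sigma$ is well defined and that $S_{i+r}=S_i$ genuinely forces the nesting used in the rigidity step. The problem is that $g$ may map part of a branch back into $P_2$, or fold it across several branches. My first attempt will be to follow only the germs at the attaching points of $P_2$ where $g^i(P_1)$ actually exits, using the finiteness of these points and $g(P_2)=P_2$. If one branch is mapped over several branches, I will compare the counts of exits: the number of exits is nonincreasing along the orbit, hence constant, which makes $\sigma$ injective on the exits that actually occur.
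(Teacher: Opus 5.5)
Your reduction to $g=f^{p_2}$ with $\tilde g$-period $k>m$ is fine. The paper gives no proof of this lemma; it quotes \cite[Corollary~4.3]{Jelic2}, so your argument has to stand on its own. It has a genuine gap: it only records how $g^i(P_1)$ leaves $P_2$, i.e.\ it only sees $g^i(P_1)\setminus P_2$, and this does not determine $g^i(P_1)$. The map $g$ can move the piece $g^i(P_1)\cap P_2$ around inside the invariant continuum $P_2$. None of your steps before concluding $h(P_1)=P_1$ uses $k>m$, so they would have to hold for every periodic $P_1$ meeting $P_2$, and they do not. Let $T$ be a triod with centre $o$ and arms $\{(j,t)\colon t\in[0,1]\}$, $j=1,2,3$, $(j,0)=o$, and put $f(1,t)=(2,t)$, $f(2,t)=(1,t)$, $f(3,t)=(3,t)$. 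Take $P_2$ to be the union of arms $1$ and $2$, and $P_1=\{(1,t),(3,t)\colon t\le 1/2\}$. Then $f(P_2)=P_2$, $P_1\cap P_2\neq\emptyset$, and $f(P_1)=\{(2,t),(3,t)\colon t\le 1/2\}\neq P_1=f^2(P_1)$. Yet $T\setminus P_2$ has one component, and $P_1$, $f(P_1)$ coincide outside $P_2$. Your scheme yields $N=1$, $\sigma=\mathrm{id}$, $r=1$, $h=f$, and the rigidity step asserts $f(P_1)=P_1$, which is false. So the statement you are really proving is false: that the period divides something at most $\lcm\{1,\ldots,N\}$, with $N$ the number of components of $T\setminus P_2$. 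The lemma survives here only because $m=6$, i.e.\ endpoints of $T$ lying inside $P_2$ must be counted too.

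The branch bookkeeping also fails, even on the interval. Take $f$ on $[-1,3]$ equal to $-3x$ on $[-1,0]$, $x$ on $[0,2]$ and $8-3x$ on $[2,3]$, with $P_2=[0,1]$ and $P_1=[0,3]$. Then $f(P_1)=[-1,2]$ and $f^2(P_1)=P_1$, so $P_1$ exits only to the right while $f(P_1)$ exits on both sides. Hence no bijection $\sigma$ with $\sigma(S_0)=S_1$ exists. Your repair (that the number of exits is nonincreasing) is also false, because a branch piece can be folded back through $P_2$ into other branches. The monotone-chain step fails even when all exit data agree. Take $f$ on $[-2,3]$ equal to $-4x-5$ on $[-2,-1]$, $x$ on $[-1,2]$ and $10-4x$ on $[2,3]$. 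Then $[-2,2]$ and $[-1,3]$ form a $\tilde f$-orbit of period $2$ around the fixed $P_2=[0,1]$, and both exit on both sides. Since $f$ is the identity near the attaching points, following germs gives $\sigma=\mathrm{id}$ and $h=f$. The left pieces are strictly nested, yet there is no chain $P_1\subsetneq h(P_1)\subsetneq\cdots$. The left piece of $[-2,2]$ is stretched across $P_2$ and produces the right piece of $[-1,3]$, and the nestings in the two branches go in opposite directions. The only valid rigidity fact is the global one: distinct members of one $\tilde g$-orbit are never nested. What is missing is an invariant that determines $g^i(P_1)$ completely, including its part inside $P_2$ and the folding of branch pieces across $P_2$, whose number of independent directions is bounded by $|\End(T)|$ rather than by $N$.
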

\begin{corollary}\label{cor:nested}
Let $f\colon I\to I$ be an interval map and let $P_1$ and $P_2$ be two intersecting periodic points of $\left(\mathcal{C}(I),\tilde{f}\right)$ of periods $p_1$ and $p_2$, respectively.
If $p_1>2p_2$ then $P_1\subset P_2$.
\end{corollary}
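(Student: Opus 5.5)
The plan is to derive Corollary~\ref{cor:nested} directly from the preceding lemma by specializing the tree $T$ to the arc $I$. A compact interval $I=[a,b]$ with $a<b$ is a tree with exactly two endpoints, so $|\End(I)|=2$. Hence $m=\lcm\{1,2\}=2$.

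Next, check that the hypotheses match. An interval map $f\colon I\to I$ is a tree map on $T=I$. The induced system $(\mathcal{C}(I),\tilde f)$ is exactly the system $(\mathcal{C}(T),\tilde f)$ in the lemma. $P_1$ and $P_2$ are intersecting $\tilde f$-periodic continua of periods $p_1$ and $p_2$, and the assumption $p_1>2p_2$ is precisely $p_1>mp_2$. The lemma then gives $P_1\subset P_2$ immediately.

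The only point needing care is degenerate cases, which do not obstruct the argument. If $I$ were a single point, $\mathcal{C}(I)$ would be a single fixed point, so $p_1=p_2=1$ and $p_1>2p_2$ fails; the statement is then vacuous. Degenerate $P_1$ or $P_2$ (singletons, i.e. periodic points of $f$) are also covered by the lemma, since it imposes no nondegeneracy.

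I expect no real obstacle here. The one thing to confirm is that the cited result~\cite[Corollary 4.3]{Jelic2} uses the convention that an arc's endpoint set has cardinality $2$, as I have assumed.
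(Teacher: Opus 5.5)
Your proposal is correct and matches how the paper obtains the corollary, which it states without separate proof. It is the cited tree lemma \cite[Corollary 4.3]{Jelic2} specialized to $T=I$: there $|\End(I)|=2$ gives $m=\lcm\{1,2\}=2$, so the hypothesis $p_1>2p_2$ is exactly $p_1>mp_2$.
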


\begin{lemma}\label{lem:disjoint_seq}
    Let $f\colon I\to I$ be an interval map and $A\in\mathcal{C}(I)$ nondegenerate.
    Assume that a sequence $\{B_n\}_{n\in\mathbb{N}}\subset s\alpha(A)$ consists of pairwise disjoint elements and $B=\lim_n B_n$ exists. Then $B\in s\alpha(A)$.
\end{lemma}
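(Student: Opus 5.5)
Since the $B_n$ are pairwise disjoint subintervals of $I$, their lengths tend to zero, so $B=\{b\}$ is degenerate; passing to a subsequence we may assume $B_n=[a_n,b_n]$ converge monotonically to $b$ from one side, say $b_1<a_2\le b_2<a_3<\dots\to b$. The plan is to produce a single backward branch of $A$ whose $\alpha$-limit set contains $\{b\}$, using backward branches converging to the $B_n$ and gluing them.

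\textbf{Step 1: classify the $B_n$.} By Theorem~\ref{thm:alpha_char}, each $B_n$ lies in the $\alpha$-limit set of some branch, which is of type (a), (b) or (c). If infinitely many $B_n$ are degenerate, then $b_n\in s\alpha(x)$ for suitable $x\in A$ (by Lemma~\ref{lem:intersects_alpha} and the description in case (b)). We then use that the branches $\{A^n_k\}$ converging to $\{b_n\}$ consist of intervals shrinking along the backward orbit, and aim to reduce to closedness of the relevant point special $\alpha$-limit sets inside a single basic set or periodic orbit. A cleaner route, which we take first, is to observe that disjoint nondegenerate periodic elements force a horseshoe. By Theorem~\ref{thm:pos_ent} and Remark~\ref{rem:horseshoe}, if infinitely many $B_n$ are nondegenerate and periodic, then $f$ has horseshoes localized near $B_n$ and $B_m$. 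If infinitely many are wandering, they lie in case (c), inside a basic set by Lemma~\ref{lem:wandering}. In every subcase except finitely many we thus expect to find a basic set $\omega$, or at least horseshoes, accumulating at $b$.

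\textbf{Step 2: construct the branch.} We use the horseshoe arcs $L^{(n)}_1\subset N(B_n,\zeta_n)$ from Remark~\ref{rem:horseshoe}, with $\zeta_n\to0$. From Lemma~\ref{lem:positive_htop} we have $f^{2t}(L)\supset A\setminus N(B_n,d/2)$, and by the same lemma, applied with $B_n,B_{n+1}$ playing the roles of $B,C$, we get an arc near $B_{n+1}$ whose iterate covers an arc near $B_n$. Chaining these coverings and repeatedly applying Lemma~\ref{lem:sharkovskii}, we build nested preimages: intervals $J_n$ near $B_n$ (within $\zeta_n$) with $f^{s_n}(J_{n+1})=J_n$, and $f^{s_0}(J_1)=A$. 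Defining $A_k$ as the successive images along this chain gives a backward branch of $A$ containing the subsequence $J_n$, whose Hausdorff limit is $\{b\}$, since $J_n\subset N(B_n,\zeta_n)$ and $\diam B_n\to0$. We also need $\diam J_n\to0$; this follows because $J_n$ is obtained as a preimage piece that can be chosen inside $N(B_n,\zeta_n)$.

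\textbf{Main obstacle.} The hard part is Step 2: the covering $f^{m}(\text{arc near }B_{n+1})\supset$ arc near $B_n$ must hold with the correct target, namely an arc that itself covers the previous piece. Lemma~\ref{lem:positive_htop} only yields covering of $A\setminus N(B,d/2)$, i.e. of the part of $A$ far from $B_{n+1}$. This contains $B_n$ only if $B_n\subset A$ and the distance conditions are compatible uniformly in $n$, which is delicate because $d_n=\dist(B_n,B_{n+1})\to0$. We would first try to use that every $B_n$ meets $A$ (shown in the proof of Lemma~\ref{lem:positive_htop}), so $b\in A$ and $A$ contains all $B_n$ for large $n$. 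Then $A\setminus N(B_{n+1},d_n/2)\supset B_n$ suffices, and this needs only $B_n\subset A$, not any uniformity. The wandering and degenerate subcases, where no periodicity is available, would be handled by replacing $B_n$ with an iterate inside the cycle $\mathcal{K}$ of Lemma~\ref{lem:wandering} and using mixing of the model map.
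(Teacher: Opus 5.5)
There is a genuine gap in the periodic case, and it occurs before your Step~2. Lemma~\ref{lem:positive_htop} requires $B,C\in\Fix(\tilde f)$. Theorem~\ref{thm:pos_ent} and Remark~\ref{rem:horseshoe} require $\mathcal{O}_f(B)\cap\mathcal{O}_f(C)=\emptyset$. Pairwise disjointness of the sets $B_n$ gives neither, since a priori $B_n$ may meet some $f^i(B_m)$. The same issue undermines your fix of the ``main obstacle''. That $A$ meets $B_n$ is proved using $f(B_n)=B_n$; for a merely periodic $B_n$ you only get $A\cap\mathcal{O}_f(B_n)\neq\emptyset$, so neither $b\in A$ nor $B_n\subset A$ follows.

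The missing idea is the paper's reduction to fixed elements with disjoint orbits, which splits by the periods $p_n$:
\begin{itemize}
\item If the $p_n$ are bounded, pass to $f^p$ with $p=\lcm\{p_n\}$, replacing $A$ by a suitable image. Then all $B_n$ become $\tilde f$-fixed, and disjointness of the sets becomes disjointness of orbits.
\item If the $p_n$ are unbounded and no subsequence has pairwise disjoint orbits, Corollary~\ref{cor:nested} yields iterates $C_n$ of some $B_{k_n}$ with $f^{i_{n+1}}(C_{n+1})\subset\Int f^{i_n}(C_n)$. Since $s\alpha(A)$ is $\tilde f$-invariant, this contradicts Proposition~\ref{prop:nested}.
\end{itemize}
Only after this reduction are the horseshoe arcs of Remark~\ref{rem:horseshoe} available. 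From that point on, your chaining via Lemma~\ref{lem:sharkovskii} is essentially the paper's construction. One detail: the paper starts the chain from a preimage $D\subset(b_1,a_3)$ of $A$, i.e.\ near $B_2$. A piece near $B_1$ need not lie in $A\supset[b_1,b]$, and hence need not be covered by $f^{2t}(L)\subset A$.

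The wandering case is also not proved. Lemma~\ref{lem:positive_htop} is unavailable there, and ``use mixing of the model map'' is only a pointer. The paper's argument needs the following:
\begin{itemize}
\item All $B_n$ come from one basic set $\omega=B(K,f)$. By Lemma~\ref{lem:wandering}, $\omega_{\tilde f}(A)=\mathcal{K}^n$ for every $n$, so the families $\mathcal{K}^n$ coincide.
\item The limit point satisfies $b\in\omega$.
\item Arcs $J_n=[j_n,b]\subset K_{(m)}$ with $j_n\to b$ meet $\omega$ in their interiors. Then $\omega_{\tilde f}(J_n)=\mathcal{K}$ by Remark~\ref{rem:model_BS}, so some iterate of $J_n$ covers the previously chosen piece.
\end{itemize}
The delicate point is when $b$ is an endpoint of $K_{(m)}$. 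This is handled by a preimage of $b$ lying in $\Int K_{(m)}$, or, when no such preimage exists, by the periodicity of $b$.

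Your Step~1 remarks on degenerate $B_n$ are likewise only an intention. Note that the paper's argument is itself written for nondegenerate $B_n$ (cf.\ Lemma~\ref{lem:sequences}).
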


\begin{proof}
    First note that $\lim_n(\diam B_n)=0$ and denote $B=\{b\}$.
    By Theorem~\ref{thm:alpha_char}, we can either assume that the sequence $\{B_n\}_{n\in\mathbb{N}}$ consists of wandering subintervals or that it consists of periodic subintervals.
    Moreover, without loss of generality, we may assume that $\{B_n\}_{n\in\mathbb{N}}$ is monotone in $I$, say  $a_n<b_n<a_{n+1}$ for every $n\in\mathbb{N}$ where $B_n=[a_n,b_n]$ (decreasing case is symmetric). 

    First, suppose that every $B_n$ is periodic with period $p_n$.
Let us clarify that Remark~\ref{rem:horseshoe} can be applied.
To that end, we consider two subcases. 
For the first subcase, suppose that the set $\{p_n\colon n\geq 0\}$ is bounded.
Let $p=\lcm\{p_n\colon n\in\mathbb{N}\}$. Obviously $f^p(B_n)=B_n$ for every $n\in\mathbb{N}$ and $f^p(B)=B$.
For every $n\in\mathbb{N}$. Let $\{A^n_k\}_{k\leq 0}$ be a backward branch of $A$ such that $B_n\in\alpha\left(\{A^n_k\}_{k\leq 0}\right)$.
Then, for every $n\in\mathbb{N}$ there is an integer $j_n\in [0,p)$ such that $\lim_{k\to -\infty} A^n_{kp+j_n}=B_n$.
There is a constant subsequence of $\{j_n\}_{n\geq 1}$ so we can assume that all $j_n$ are equal.
Now, for every $n\in\mathbb{N}$ we have $B_n\in s\alpha_{f^p}\left(f^{j_n}(A)\right)$.
Therefore, after replacing $f$ with $f^p$ and $A$ with $f^{j_n}(A)$, we can assume that $j_n=0$ for every $n\in\mathbb{N}$ and that $p=1$.
In other words, in this subcase, we let $\{B_n\}_{n\in\mathbb{N}}$ be a sequence of nondegenerate and pairwise disjoint fixed points of $f$ contained in $s\alpha(f)$ and converging to a subinterval $B$.
Such a sequence obviously satisfies the condition of Theorem~\ref{thm:pos_ent}, and consequently Remark~\ref{rem:horseshoe} can be applied.

Now we consider the alternative subcase,  that is $\{p_n\colon n\geq 1\}$ is unbounded and suppose that 
Theorem~\ref{thm:pos_ent} cannot be applied for any subsequence of $\{B_n\}_{n\geq 1}$.
This implies that all but finitely many elements of $\{B_n\}_{n\geq 1}$ intersect orbits of infinitely many elements of $\{B_n\}_{n\geq 1}$.
By Corollary~\ref{cor:nested}, there is a nested sequence $\{C_n\}_{n\geq 1}$ of periodic subintervals of $I$, every $C_n$ being an iterate of some $B_{k_{n}}$.
After passing to a subsequence, we can assume that $p_{k_{n+1}}/p_{k_n}\geq 3$ for every $n\geq 1$. But then we have a sequence $\{f^{i_n}(C_n)\}_{n>0}$ where, for every $n\in\mathbb{N}$, $0\leq i_n<p_n$ is chosen in a way that $f^{i_{n+1}}(C_{n+1})\subset \Int f^{i_n}(C_n)$. This yields a contradiction with Proposition~\ref{prop:nested}, since $s\alpha(A)$ is invariant.
Again we see that the sequence $\{B_n\}$ is such that the assumptions of Theorem~\ref{thm:pos_ent} are satisfied, and consequently Remark~\ref{rem:horseshoe} can be applied also in this case.

 Therefore, we may apply Remark~\ref{rem:horseshoe} after replacing  $\{B_n\}_{n\geq 1}$ with its subsequence if necessary, that is for every $n>1$ there is some $L_n\in \mathcal{C}(I)$ and $k_n\in\mathbb{N}$ such that $L_n\subset (b_{n-1},a_{n+1})$, $f^{k_n}(L_n)\cap B_1\neq\emptyset$ and  $f^{k_n}(L_n)\supset L_n$.
    We can now define a backward branch of $A$ converging towards $\{b\}$.   
    Since $B_2\in s\alpha(A)$, there is some $D\in \mathcal{C}(I)$, $D\subset (b_1,a_3)$, and some $j>0$ such that $f^j(D)=A$.
    For $-j\leq k\leq 0$, set $A_k=f^{j+k}(D)$.
    Next, the construction follows by the induction in the following way. Assume that for some $k\leq -j$ the set $A_k\subset (b_{m-1},a_{m+1})$ is already defined. Observe that $L_{m+2}\subset (b_{m+1},a_{m+2})$
    and therefore $f^{k_{m+2}}( L_{m+2})\supset [b_1,b_{m+1}]\supset A_k$.
     We apply Lemma~\ref{lem:sharkovskii} to find some closed interval $J_{m+2}\subset L_{m+2}$ such that $f^{k_{m+2}}(J_{m+2})=A_k$. 
     Now, for  $k-k_{m+2}\leq t\leq k$ we set $A_{t}=f^{t-k+k_{m+2}}(J_{m+2})$. We proceed by performing inductive step at $A_{k-k_{m+2}}$. It is clear that $B\in \alpha(\{A_n\}_{n\leq 0})$.

    Now suppose that every $B_n$ is wandering.
    By Lemma~\ref{lem:wandering}, for every $n\in\mathbb{N}$, there is some basic set $\omega_n=B(K^n,f)$ and a set $\mathcal{K}^n$ of intervals described in Remark~\ref{rem:model_BS}, such that $A$ is contained in some element of $\mathcal{K}^n$ and $\omega_{\tilde{f}}(A)=\mathcal{K}^n$.
    But then, {since $\mathcal{K}^n=\mathcal{K}^m$ for $m\neq n$, } there is a basic set $\omega=B(K,f)$ such that $\omega=\omega_n$, $K=K^n$ and $\mathcal{K}=\mathcal{K}^n$ for every $n\in\mathbb{N}$.
    Then we also have $b\in\omega$.
    Since all $B_n$ are pairwise disjoint and there are infinitely many of them, there is some $t\in\mathbb{N}$ and some $i\geq 0$ such that $B_t$ is included in the interior of some $K_{(i)}\in \mathcal{K}$.
    Then there is some $r>0$ and some $A_{-r}\in\mathcal{C}(I)$ such that $f^{r}(A_{-r})=A$ and $A_{-r}\subset\Int \mathcal{K}_{(i)}$.

    {For $-r\leq k\leq 0$ we put
    $A_k=f^{k+r}(A_{-r})$ and for remaining $A_k$ we proceed by induction in the following way.}
    There is some $m>0$ and a sequence of compact intervals $J_n=[j_n,b]$ such that $J_n\subset \mathcal{K}_{(m)}$ for every $n\in\mathbb{N}$ and $\lim_n j_n=b$.
    Note that $(\Int J_n)\cap\omega\neq\emptyset$ and therefore $\omega_{\tilde{f}}(J_n)=\mathcal{K}$ for every $n\in\mathbb{N}$.
    {It is clear that for every $n$ there is $s$ such that $f^s(J_n)\supset A_{-r}$ and if $J_k\subset \Int \mathcal{K}_{(m)}$ then $f^s(J_n)\supset J_k$. So the only possibly problematic situation is when $b$ is an endpoint of $\mathcal{K}_{(m)}$.
    If there is $y\in \Int \mathcal{K}_{(m)}$ such that $f^t(y)=b$ then we easily find $s>t$ such that $j_k\in f^s(J_n)$ and $y\in f^{s-t}(J_n)$ so $f^s(J_n)\supset J_k$.
    When such $y$ does not exist, then $b$ is a periodic point, and then $f^s(J_n)\supset J_k$ for sufficiently large $s$ which is multiple of the period of $b$.}
    Now we define a sequence $\{k_n\}_{n\in\mathbb{N}}$ of integers inductively.
    Set $k_1\coloneqq -r$.
    Assume that $k_n$ is defined
    and either $A_{-r}=A_{k_n}$ or $J_t\supset A_{k_n}$ for some $t$.
    Then there is $s_n>0$ such that $f^{s_n}(J_n)\supset A_{k_n}$.
    By Lemma~\ref{lem:sharkovskii} there is an interval $L_n\subset J_n$ such that $f^{s_n}(L_n)=A_{k_n}$.
    Now set $k_{n+1}\coloneqq k_n-s_n$, $A_{k_{n+1}}\coloneqq L_n$ and, for every $k_{n+1}<k<k_n$, set $A_{k}=f^{k-k_{n+1}}(A_{k_{n+1}})$.
    By the construction, obtained sequence $\{A_k\}_{k\leq 0}$ is a backward branch of $A$ and, since $A_{k_{n}}\subset J_n$, we have $\lim_{k\to-\infty}A_k=\{b\}$.
    Therefore, $\{b\}\in \alpha(\{A_n\}_{n\leq 0})\subset s\alpha(A)$. This completes the proof.
\end{proof}

Now we have all the tools to prove the main theorem of this section.

\begin{proof}[Proof of Theorem~\ref{thm:special_closed}]
    Let $A\in\mathcal{C}(I)$ be nondegenerate and $B\in \overline{s\alpha(A)}$.
Let $\{B_n\}_{n>0}$ be a sequence in $s\alpha(A)$ such that $\lim B_n=B$.
We divide the proof in two cases possible which follow from the statement of Lemma~\ref{lem:sequences} (we go to a subsequence of $\{B_n\}_{n>0}$ when necessary).

\medskip\paragraph{\textbf{Case 1:}} 
First assume that all the elements of $\{B_n\}_{n>0}$ intersect.
{Then we may also assume that all $B_n$ are nondegenerate, as otherwise $s\alpha(A)\ni B_n=B_m=B$ for every $m,n$ and so there is nothing to prove.
 By Theorem~\ref{thm:JO2}, and after taking subsequence if necessary, either every $B_n$ is asymptotically periodic or they are all wandering.
If they are wandering, then Theorem~\ref{thm:alpha_char}(c) applies, and so since $B_n\cap B_m\neq\emptyset$ for $m\neq n$, we must have $B_n=B_m=B$, which completes the proof. So the only possibility is Theorem~\ref{thm:alpha_char}(a), that is every $B_n$ is periodic with some period $p_n>0$.}
Notice that, by Proposition~\ref{prop:nested}, there cannot exist any set of indices $\{n_1,n_2,n_3,n_4\}\subset \mathbb{N}$ such that $B_{n_1}\subset\Int B_{n_2}\subset\Int B_{n_3}\subset\Int B_{n_4}$

We claim that the set $\{p_n\colon n>0\}$ of periods is bounded. To prove this we suppose on the contrary, that there is no upper bound for $\{p_n\colon n>0\}$.
{Going to a subsequence when necessary, by
Corollary~\ref{cor:nested}, 
we obtain that the sequence $B_n$
is nested, that is $B_{n+1}\subset B_n$
for every $n$. We may also assume that $p_{n+1}>3p_n$ for every $n$.
But then $p_{n}$ divides $p_{n+1}$ and $p_{n+1}/p_{n}\geq 3$, for every $n\in\mathbb{N}$.}
In particular, for every $n\in\mathbb{N}$, every iterate of $B_n$ contains at least three distinct iterates of $B_{n+1}$.
This way, just like in Lemma~\ref{lem:disjoint_seq}, we have a sequence $\{f^{i_n}(B_n)\}_{n>0}$ where, for every $n\in\mathbb{N}$, $0\leq i_n<p_n$ is chosen in a way that $f^{i_{n+1}}(B_{n+1})\subset \Int f^{i_n}(B_n)$. This yields a contradiction {with Proposition~\ref{prop:nested}, since $s\alpha(A)$ is invariant.}
Indeed, the set $\{p_n\colon n>0\}$ of periods is bounded.

Now, repeating exactly the same arguments as in the proof of Lemma~\ref{lem:disjoint_seq}, from now on we may assume that $\{B_n\}_{n\in\mathbb{N}}$ consists of nondegenerate and pairwise intersecting fixed points of $f$ contained in $s\alpha(f)$ and converging to a 
subinterval $B$. Now, if $\{B_n\}_{n\in\mathbb{N}}$ has a constant subsequence then obviously $B\in s\alpha(A)$. Otherwise, assumptions of one of the Lemmas~\ref{lem:no_overlap},~\ref{lem:increasing_seq}, and~\ref{lem:decreasing} are satisfied, proving that $B\in s\alpha(A)$. This completes the proof of first case.

\medskip\paragraph{\textbf{Case 2:}} 
Now consider the second case and assume that $\{B_n\}_{n\in\mathbb{N}}$ consists of pairwise disjoint intervals.
This case is covered by Lemma~\ref{lem:disjoint_seq}, which ensures $B\in s\alpha(A)$. 

We proved that $s\alpha(A)$ is closed, finishing the proof of 
Theorem~\ref{thm:special_closed}.
\end{proof}

Note that in~\cite{HantakovaRoth}, the authors proved that, for an interval map $f\colon I \to I$ and $x \in I$, the set $s\alpha(x)$ is not closed if and only if $x$ belongs to a maximal solenoidal set that contains a nonrecurrent point from the Birkhoff center of $f$.
Theorem~\ref{thm:special_closed} extends this result to the induced map, thus proving the following.

\begin{corollary}
    Let $f\colon I\to I$ be an interval map and $A\in \mathcal{C}(I)$. 
    Then $s\alpha(A)$ is not closed if and only if $A=\{a\}$ is a singleton and $a$ belongs to a maximal solenoidal set that contains a nonrecurrent point from the Birkhoff center of $f$.
\end{corollary}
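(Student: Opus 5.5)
The plan is to derive the corollary by splitting according to whether $A$ is degenerate. If $A$ is nondegenerate, Theorem~\ref{thm:special_closed} says $s\alpha(A)$ is closed, so the condition ``$A$ is a singleton'' is necessary for non-closedness. It therefore remains to handle $A=\{a\}$ and show that $s\alpha_{\tilde f}(\{a\})$ is closed if and only if $s\alpha_f(a)$ is closed. The Hant\'akov\'a--Roth characterization~\cite{HantakovaRoth} then gives the stated equivalence.

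For $A=\{a\}$, I would first identify the backward branches of $\{a\}$ in $\mathcal{C}(I)$. If $f(A_{n-1})=A_n$ and $A_n$ is a singleton, then $A_{n-1}$ need not be a singleton: it may be any continuum contained in $f^{-1}(A_n)$. So the branches are not automatically sequences of singletons. However, $f$ maps each such $A_{n-1}$ onto a point, so $A_{n-1}$ is a continuum on which $f$ is constant. Every element of $s\alpha(\{a\})$ is therefore a Hausdorff limit of continua of this kind.

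The key step is to show that $s\alpha_{\tilde f}(\{a\})$ consists of two parts. The first part is the singletons $\{y\}$ with $y\in s\alpha_f(a)$; conversely every such singleton lies in $s\alpha(\{a\})$, by taking branches of singletons. The second part is possibly some nondegenerate limits $C$. I would show that these nondegenerate limits form a closed family and can only accumulate at singletons that already lie in $s\alpha_f(a)$. One would then like to conclude that $s\alpha(\{a\})$ is closed exactly when the set of its singletons, which is a homeomorphic copy of $s\alpha_f(a)$, is closed.

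I expect the second part to be the main obstacle. One direction is unproblematic: a sequence of singletons $\{y_n\}$ with $y_n\in s\alpha_f(a)$ converging to $\{y\}$ with $y\notin s\alpha_f(a)$ shows that $s\alpha(\{a\})$ is not closed. The other direction requires that no limit of nondegenerate elements of $s\alpha(\{a\})$ escapes the set. For this I would reuse Lemmas~\ref{lem:sequences}, \ref{lem:no_overlap}, \ref{lem:increasing_seq}, \ref{lem:decreasing} and \ref{lem:disjoint_seq}. As far as I can see, their proofs use nondegeneracy of $A$ only at the point where $B_n\in s\alpha(A)$ is nondegenerate, and not nondegeneracy of $A$ itself. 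If that holds, nondegenerate accumulations behave as before.

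The remaining danger is a limit of nondegenerate elements shrinking to a point $\{y\}$, which is the pairwise-disjoint case handled in Lemma~\ref{lem:disjoint_seq}. I would check that the horseshoe-based construction there produces a branch of singletons, so that $y\in s\alpha_f(a)$. Also, by Lemma~\ref{lem:intersects_alpha}, every element of $s\alpha(\{a\})$ meets $s\alpha_f(a)$, which ties any such limit point back to the base special $\alpha$-limit set. Combined with~\cite{HantakovaRoth}, this yields the corollary.
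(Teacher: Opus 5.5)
Your proposal is correct. Its skeleton matches the paper's: Theorem~\ref{thm:special_closed} forces $A=\{a\}$, and then \cite{HantakovaRoth} applies. You add a step the paper skips, though.

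The paper simply ``views'' $s\alpha_{\tilde{f}}(\{a\})$ as $s\alpha_f(a)$. As you noticed, this is not literally true when $f$ is constant on intervals; the paper's own example has $[-1,0],[0,1],[-1,1]\in s\alpha(\{0\})$. The singletons of $s\alpha_{\tilde{f}}(\{a\})$ are exactly $\{\{y\}\colon y\in s\alpha_f(a)\}$: one inclusion comes from singleton branches, the other from Lemma~\ref{lem:intersects_alpha}. Since singletons form a closed subset of $\mathcal{C}(I)$, this settles ``$s\alpha_f(a)$ not closed $\Rightarrow$ $s\alpha_{\tilde{f}}(\{a\})$ not closed''. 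The converse genuinely needs your extra argument that nondegenerate elements create no new accumulation points. So the paper's route buys brevity, and yours buys a justified reduction.

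Two comments on carrying it out. For a degenerate limit $B_n\to\{b\}$ you do not need the horseshoe construction of Lemma~\ref{lem:disjoint_seq}. For $A=\{a\}$ that route is in fact vacuous: by Lemma~\ref{lem:intersects_orbit}, every nondegenerate $\tilde{f}$-fixed element of $s\alpha(\{a\})$ contains $a$, so the disjointness required in Lemma~\ref{lem:positive_htop} never occurs. Instead, pick $y_n\in B_n\cap s\alpha_f(a)$ via Lemma~\ref{lem:intersects_alpha}; then $y_n\to b$, and closedness of $s\alpha_f(a)$ gives $\{b\}\in s\alpha(\{a\})$.

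For a nondegenerate limit your belief is right. Proposition~\ref{prop:nested}, Lemmas~\ref{lem:no_overlap}, \ref{lem:increasing_seq}, \ref{lem:decreasing} and Case~1 of the proof of Theorem~\ref{thm:special_closed} never use nondegeneracy of $A$. For $A=\{a\}$ things even simplify:
the opening argument of Lemma~\ref{lem:wandering} shows that no nondegenerate wandering interval lies in $s\alpha(\{a\})$, because the singleton $A_0$ would meet infinitely many pairwise disjoint $f^{n_k}(B)$. Also, the hypotheses of Lemmas~\ref{lem:increasing_seq} and~\ref{lem:decreasing} cannot hold. Their proofs push a nonempty set of periodic points lying outside some $B_n$ into $A=\{a\}$, while $a\in B_n$.

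A minor imprecision: only the first nondegenerate term of a branch of $\{a\}$ is collapsed by $f$ itself. In general $A_n$ is collapsed by $f^{|n|}$.
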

\begin{proof}
{If $s\alpha(A)$ is not closed then by Theorem~\ref{thm:special_closed}
we have $A=\{a\}$ for some $a\in I$.
But then we can view $s\alpha(A)$
as not-closed $s\alpha(a)$ for $f$, and these are characterized in \cite{HantakovaRoth} exactly as in the statement. By exactly the same characterization in \cite{HantakovaRoth} we obtain that converse implication.
}
\end{proof}

\section*{Acknowledgements}

Piotr Oprocha is grateful
to University of Science and Technology in Hefei, China for its hospitality and financial
support.

Research of Domagoj Jeli\'c was supported by the European Union - NextGenerationEU, project: IP-UNIST-44 (ITPEM).

This article has been supported by EU funds under the project “Increasing the resilience of power grids in the context of decarbonisation, decentralisation and sustainable socioeconomic development", CZ.02.01.01/00/23\_021/0008759, through the Operational Programme Johannes Amos Comenius. 

\bibliographystyle{plain}
\bibliography{bibInducedAlphaLimit}	
\end{document}